\newtheorem{Theorem}{Theorem}[section]
\newtheorem{Lemma}{Lemma}[section]
\newtheorem{Proposition}{Proposition}[section]
\newtheorem{Corollary}{Corollary}[section]
\theoremstyle{definition}
\newtheorem{Definition}{Definition}[section]
\theoremstyle{remark}
\newtheorem{Remark}{Remark}[section]
\numberwithin{equation}{section}
\def\va{\varphi}
\renewcommand{\i}{{\mathcal I}}
\renewcommand{\j}{{\mathcal J}}
\newcommand{\R}{{\mathbb R}}
\newcommand{\tr}{{\rm tr}}
\newcommand{\na}{\nabla}
\newcommand{\dl}{\delta}
\def\f{\frac}
\renewcommand{\O}{\Omega}
\def\D{\Delta }
\def\hf1{^\f{1}{1-\xi^2}}
\def\avint{\mathop{\,\rlap{--}\!\!\int}\nolimits}
\def\be{\begin{equation}}
\def\en{\end{equation}}
\def\bs{\begin{split}}
\def\es{\end{split}}
\def\ba{\begin{align}}
\def\ea{\end{align}}
\renewcommand{\a}{\alpha}
\renewcommand{\b}{\beta}
\newcommand{\del}{\partial}
\newcommand{\la}{\lambda}
\newcommand{\ve}{\varepsilon}
\newcommand{\md}{\mathrm{d}}
\newcommand{\tq}{\tilde{Q}}
\author[G.-Q. Chen]{Gui-Qiang G. Chen}
\address{Mathematical Institute, University of Oxford, Oxford OX2 6GG, UK.}
\email{chengq@maths.ox.ac.uk}
\author[A. Majumdar]{Apala Majumdar}
\address{Department of Mathematical Sciences, University of Bath, Bath,  BA2 7AY,
UK.}
\email{a.majumdar@bath.ac.uk}
\author[D. Wang]{Dehua Wang}
\address{Department of Mathematics, University of Pittsburgh,
                           Pittsburgh, PA 15260, USA.}
\email{dwang@math.pitt.edu}
\author[R. Zhang]{Rongfang Zhang}
\address{Department of Mathematics, University of Pittsburgh,
                           Pittsburgh, PA 15260, USA.}
\email{roz14@pitt.edu}
\title[Active liquid crystal system]
{Global Weak Solutions for the Compressible Active Liquid Crystal System}
\keywords{Active hydrodynamics, active liquid crystals, nonequilibrium, compressible flows,
Navier-Stokes equations, Q-tensor, global solutions, three-level approximation,
weak convergence}
\subjclass[2010]{35Q35, 35Q30, 35D35, 76D05, 76A15}
\date{\today}
\begin{document}
\begin{abstract}
We study the hydrodynamics of compressible flows of active liquid crystals in the Beris-Edwards hydrodynamics framework,
using the Landau-de Gennes $Q$-tensor order parameter to describe liquid crystalline ordering.
We prove the existence of global weak solutions for this active   system in three space dimensions by the three-level approximations 
and weak convergence argument. New techniques and estimates are developed to overcome the difficulties caused by the active terms.
\end{abstract}

\maketitle
\section{Introduction}

Nematic liquid crystals are classical examples of complex liquids with long-range orientational order or anisotropic liquids
with distinguished directions of average molecular alignment \cite{G-1995, virga}.
Nematic order, often described in terms of the collective alignment of constituent elongated particles ({\it e.g.}, molecules),
is ubiquitous; we see collective motion or coordinated motion at all scales ranging from micro-organisms to traffic and flocks of animals.
The phrase {\it active hydrodynamics} is often used to describe the collective dynamics of particles that are constantly maintained
out of equilibrium by internal energy sources \cite{M-R-2006, G-M-C-H-2012}.
Active systems are quite generic in nature, including many biophysical systems such as
microtubule bundles \cite{S-C-D-H-D-2012}, dense suspensions of microswimmers \cite{W-D-H-D-G-L-Y-2012},
bacteria \cite{D-T-R-B-2010}, among others.
Furthermore, the collective oriented motion is often induced by the elongated shapes of the constituent particles,
and hence a large class of active systems are referred to as {\it active liquid crystals}, especially at high concentrations.
We refer to \cite{B-T-Y-2014, R-Y-2013, G-M-C-H-2011, G-M-C-H-2012, P-K-1992, D-E-1986,K-F-K-L-2008,C-D-2007,P-K-O-2004,M-J-R-L-P-R-A-2013} and the references cited therein
for more applications and discussions.
Active nematics are fundamentally different from the typical passive nematics in the sense that there is no notion of equilibrium; the constituent
particles continuously drive the system out of equilibrium leading to striking and novel effects such as the occurrence of
giant density fluctuations \cite{R-S-T-2003, M-R-2006, N-R-M-2007}, the spontaneous laminar flow \cite{V-J-P-2005, M-O-C-Y-2007, G-M-L-2008},
unconventional rheological properties \cite{S-A-2009, G-L-M-2010, F-M-C-2011},
low Reynolds number turbulence \cite{W-D-H-D-G-L-Y-2012, G-M-C-H-2012},
and exotic spatial and temporal patterns 
\cite{M-R-2006, C-G-M-2006, S-S-2008, G-P-B-C-2010, M-B-M-2010}.

Whilst 
active liquid crystals are popular in the theoretical physics community,
a rigorous mathematical description of {\it active nematics} is relatively new.
There are phenomenological models for active liquid crystals in \cite{R-2010},
and a common approach is to add phenomenological {\it active} terms to the hydrodynamic theories for nematic liquid crystals.
The mathematics of nematic liquid crystals has witnessed a renaissance in recent years, and there are different levels
of the mathematical description of passive nematic order: molecular variables describing the orientation and position of each molecule,
a Oseen-Frank vector field representing the unique direction of preferred molecular alignment,
and a Landau-de Gennes $\mathbf{Q}$-tensor order parameter that can describe primary and secondary directions of nematic alignment
along with variations in the degree of nematic order \cite{L-W-2014}.
More precisely, the Landau-de Gennes $\mathbf{Q}$-tensor order parameter is a $d$-dimensional symmetric and traceless matrix
for the $d$-dimensional case;
the isotropic phase is defined by 
$\mathbf{Q}=0$.
In \cite{C-M-W-Z}, we analyzed active hydrodynamics in an incompressible Beris-Edwards framework,
which is defined by two evolution equations -- an evolution equation for the velocity/flow field,
and an evolution equation for the $\mathbf{Q}$-tensor anisotropic stresses from the coupling between flow/order
and {\it active} stresses.
We established the existence of global weak solutions in two and three space dimensions
for the incompressible {\it active} Beris-Edwards system.
The mathematical machinery in \cite{C-M-W-Z} relies on the technical tools in \cite{
P-Z-2011} for the incompressible Beris-Edwards system and 
we developed new techniques to overcome additional analytical difficulties (compared to \cite{
P-Z-2011}) owing to the active stresses.

In this paper, we build on the work in \cite{
C-M-W-Z} to analyze the following system for compressible flows of active nematic
liquid crystals \cite{G-B-M-M-2013, G-M-C-H-2011} in a bounded domain $\mathcal{O}\subset \R^3$:
\be \label{Qtensor-1}
\begin{cases}
\del_{t}c+(u\cdot \na) c=D_{0}\D c,\\
\del_{t}\rho+\na \cdot (\rho u)=0,\\
\del_{t}(\rho u)+\na \cdot (\rho u\otimes u)+\na P(\rho)-\mu  \D u-(\nu +\mu)\na \mathrm{div} \,u
=\na \cdot \tau+\na \cdot \sigma,\\
\partial_{t}Q+(u\cdot \nabla )Q+Q\Omega-\Omega Q
=\Gamma H[Q,c],
\end{cases}
\en
where $c$ is the concentration of active  particles, $\rho$ is the density of the fluid,
$u\in \R^{3}$ is the flow velocity, the nematic tensor order parameter $Q$ is a traceless and symmetric $3\times 3$ matrix,
$P=\kappa \rho^{\gamma}$ denotes the pressure with adiabatic constant $\gamma >1$,
$D_0>0$ is the diffusion constant, $\mu>0$ and $\nu >0$ are  the viscosity coefficients,
$\Gamma^{-1}>0$ is the rotational viscosity,
and $\Omega=\frac{1}{2}\big(\nabla u-\nabla u^\top\big)$
is the antisymmetric part of the strain tensor.
Moreover, the  tensor:
\begin{equation*}
H[Q, c]:=K\D Q-\frac{k}{2}(c-c_{*})Q+b\big(Q^2 -\frac{\tr(Q^2)}{3} \mathrm{I}_{3}\big)-c_{*}Q\,\tr(Q^2)
\end{equation*}
describes the relaxational dynamics of the nematic phase, which can be obtained from
the Landau-de Gennes free energy, {\it i.e.}, $H_{\a \b}=-\frac{\dl \mathcal{F}}{\dl Q_{\a \b}}$ with
\begin{equation*}
\mathcal{F}=\int \Big( \frac{k}{4}(c-c_{*})\tr(Q^2) -\frac{b}{3}\tr(Q^3)
   +\frac{c_{*}}{4} |\tr(Q^{2})|^{2}+\frac{K}{2}|\na Q|^{2}\Big)dA,
\end{equation*}
where $K$ is the elastic constant for the one-constant elastic energy density,
$c_*$ is the critical concentration for the isotropic-nematic transition,
and $k>0$ and $b\in\R$ are  material-dependent constants.
Without loss of generality, we take $K=k=1$ in this paper.
The stress tensor $\sigma=(\sigma^{ij})$ has two contributions:
\begin{equation*}
\sigma^{ij}=\sigma^{ij}_{r}+\sigma^{ij}_{a},
\end{equation*}
with
\begin{align*}
\sigma^{ij}_{r}&=
Q^{ik}H^{kj}[Q, c]-H^{ik}[Q, c]Q^{kj},\\
\sigma^{ij}_{a}&=\sigma_{*} c^{2}Q^{ij},
\end{align*}
where $\sigma^{ij}_{r}$ is the stress due to the nematic elasticity,
and $\sigma^{ij}_{a}$ is the active contribution which describes contractile ($\sigma_{*}>0$)
or extensile ($\sigma_{*}<0$) stresses exerted by the active particles along the director field.
The symmetric additional stress tensor is denoted by
\begin{equation*}
\tau^{ij}=\mathrm{F}(Q)\delta_{ij}-\del_{j}Q^{kl}\del_{i}Q^{kl}=\mathrm{F}(Q)\delta_{ij}-(\na Q\odot \na Q)^{ij}
\end{equation*}
with
\begin{align*}
\mathrm{F}(Q)=\frac{1}{2}|\na Q|^2 +\frac{1}{2}\tr (Q^2)+\frac{c_{*}}{4}\tr^{2}(Q^2).
\end{align*}
Here and elsewhere, we use the Einstein summation convention, {\it i.e.}, we sum over the repeated indices.

We rewrite system (\ref{Qtensor-1}) as
\begin{align}
&\del_{t}c+(u\cdot \na) c=D_{0}\D c
,\label{c}\\
&\del_{t}\rho+\na \cdot (\rho u)=0, \label{rho}\\
&\del_{t}(\rho u)+\na \cdot (\rho u\otimes u)+\na \rho^{\gamma}
  =\mu  \D u+(\nu +\mu)\na \mathrm{div}\, u +\na \cdot \big(\mathrm{F}(Q)\mathrm{I_{3}}-\na Q\odot \na Q\big)\nonumber\\
&\quad \quad \quad \quad \quad \quad \quad \quad \quad \quad \quad \quad \quad\,\,
+\nabla \cdot (Q\D Q-\D Q Q)+\sigma_{*} \na \cdot (c^{2}Q), \label{velocity}\\
&\partial_{t}Q+(u\cdot \nabla )Q+Q\Omega-\Omega Q
=\Gamma H[Q, c],\label{Q}
\end{align}
with
$$
H[Q,c]=\D Q-\frac{c-c_{*}}{2}Q+b\Big(Q^2 -\frac{\tr(Q^2)}{3} I_{3}\Big)-c_{*}Q\tr(Q^2),$$
and $\Gamma >0, D_0>0, \mu >0, \nu>0, c_{*}>0, b, \sigma_{*} \in \mathbb{R}$, $\gamma >\frac{3}{2}$, $(x, t)\in \mathcal{O}\times \R^{+}$;
subject to the following initial conditions:
\be\label{I-C}
(c, \rho, \rho u, Q)|_{t=0}=(c_{0}, \rho_{0}, m_{0}, Q_{0})(x) \qquad \mbox{for $x\in \mathcal{O}\subset\R^3$},
\en
with
\begin{align*}
&c_{0}\in H^{1}(\mathcal{O}), \qquad\,\, 0<\underline{c}\leq c_{0} \leq \overline{c}<\infty,\\
&Q_{0}\in H^1 (\mathcal{O}), \qquad Q_{0}\in S_{0}^{3} \quad \mbox{\it a.e.}\mbox{ in } \mathcal{O},
\end{align*}
and the following boundary conditions on $\partial\mathcal{O}$ with unit outward normal $\vec{n}$:
\be\label{B-condition}
\na c \cdot \vec{n}|_{\partial \mathcal{O}}=0, \qquad u|_{\partial \mathcal{O}}=0,
\qquad \na Q \cdot \vec{n}|_{\partial \mathcal{O}}=0,
\en
satisfying the following compatibility conditions:
\be\label{compat-condition}
\rho_{0}\in L^{\gamma}(\mathcal{O}), \quad \rho_{0}\geq 0;
\qquad m_{0}\in L^1 (\mathcal{O}), \quad m_{0}=0 \,\,\, \mbox{if}\,\, \rho_{0}=0;
\qquad \frac{|m_{0}|^2}{\rho_{0}}\in L^1 (\mathcal{O}).
\en

The hydrodynamic equations
in \eqref{Qtensor-1}, or \eqref{c}--\eqref{Q},
are from \cite{
G-M-C-H-2012} with some differences, primarily for technical reasons.
Namely, in the concentration equation, the diffusion constants are assumed to
be the same in all directions,
and the active current is assumed to be zero,
which is equivalent to setting $\alpha_1=0$ in equations (15a)--(15c)
in \cite{
G-M-C-H-2012}.
Furthermore,  the flow-aligning parameter $\lambda$ in \cite{
G-M-C-H-2012} is assumed to be zero;
this is not a severe restriction, but just implies that we are in the flow-tumbling regime.
Finally, we have also neglected one of the terms in the passive ``nematic" stress which does not feature
for the two-dimensional systems but can play a role for the three-dimensional systems.
Despite these simplifications compared to the successful model presented in \cite{
G-M-C-H-2012}, our work is a first step in the rigorous analysis of initial-boundary value problems
for compressible active nematics in two and three space dimensions,
and the mathematical approach developed here is different from the previous approaches
in \cite{C-M-W-Z, W-Z} and \cite{
P-Z-2011}.

In the simplified system above, the fluid flow is dictated by the compressible Navier-Stokes equations;
the particle concentration in the fluid and the evolution of the order parameter $Q$ are governed
by the parabolic-type equations,
with extra nonlinear coupling terms as forcing terms.
The term, $\mathrm{F}(Q)$, is added to close the energy in our compressible system.
Since our system reduces to the compressible Navier-Stokes system in the absence of the concentration $c$ and the $Q$-tensor,
the best result we could expect can not be better than those in \cite{F-2001,F-N-P-2001,F-2004},
in which the existence of finite-energy weak solutions of the compressible Navier-Stokes system (allowing initial vacuum)
was proved for $\gamma>\frac{3}{2}$.
In this paper, our aim is to prove the existence of global weak solutions of this compressible coupled
system (\ref{c})$-$(\ref{compat-condition}) in three space dimensions.
In our system,
owing to the varying concentration $c=c(x, t)$,
we multiply the $Q$-tensor equation by $-\big(\D Q-Q-c_{*} Q\tr (Q^{2})\big)$,
rather than $-H[Q, c]$, to avoid dealing with the interaction terms of the concentration
and the $Q$-tensor and obtain the dissipation and {\it a priori} estimates for the system.
Moreover, the cubic term of the $Q$-tensor does not appear in the energy with this strategy,
so that a positive total energy for this system can be obtained,
unlike in \cite{C-M-W-Z} and \cite{W-Z} 
a specific positive energy for the system is re-defined by using the property of the $Q$-tensor,
{\it i.e.}, (2.5) in \cite{W-Z}.
Furthermore, the highly nonlinear terms in this system cause new mathematical difficulties
compared to \cite{C-M-W-Z}.
However, since the maximum principle holds for the concentration equation \eqref{c} for  $c$ ({\it i.e.}, $c$ is bounded
if the initial condition \eqref{I-C-c} is satisfied; see Lemma \ref{solution-Q}),
the highly nonlinear terms can be dealt with via using some cancellation rules as in Lemma \ref{estimate-matrix}
and \eqref{cancel} in Proposition \ref{energy-inequality}.
We remark that the symmetry and tracelessness of the $Q$-tensor play a key role in the cancellations which are crucial
for the proof of the existence of weak solutions.
For example, in order to obtain the essential compactness results,
the force term in the compressible Navier-Stokes equations should belong to $H^{-1}(\mathcal{O})$
due to Lions \cite{L-1996}.
However, the regularity of $Q$ obtained from the $Q$-tensor equation
is $L^{\infty}_{t}H^{1}_{x}\cap L^{2}_{t}H^{2}_{x}$, which is not enough to achieve this condition.
Owing to the cancellations, all the higher order nonlinear terms together vanish,
so we do not need to deal with them.

In this paper, we apply the Faedo-Galerkin's method \cite{Temam-book}
with three levels of approximations to prove the existence of the solutions of the initial-boundary value problem
\eqref{c}--\eqref{compat-condition}
in a bounded domain $\mathcal{O}\subset \R^{3}$.
The first level of approximation concerns the artificial pressure due to the possibility of vanishing density
and lower integrability of the density.
Here we lift the density above zero to avoid the vacuum and add the artificial pressure to increase the integrability
of the density.
The second level corresponds to the artificial viscosity, which changes the continuity equation from the hyperbolic
to parabolic type  which ensures higher regularity.
The last level is the approximation from the finite-dimensional
to infinite-dimensional space.
By the weak convergence argument,
we obtain the global existence of finite-energy weak solutions defined as follows:

\begin{Definition}
For any $T>0$, $(c, \rho, u, Q)$ is a finite-energy weak solution of problem \eqref{c}$-$\eqref{compat-condition}
if the following conditions are satisfied:
\begin{enumerate}
\item[(i)] $c>0$, $c\in L^{\infty}(0, T; L^{2}(\mathcal{O}))\cap L^{2}(0, T; H^{1}(\mathcal{O}))$; \,
$\rho \geq 0$,  $\rho \in L^{\infty}(0, T; L^{\gamma}(\mathcal{O}))$;
$u\in L^{2}(0, T; H^{1}_{0}(\mathcal{O}))$,  $Q\in L^{\infty}(0, T; H^{1}(\mathcal{O}))\cap L^{2}(0, T; H^{2}(\mathcal{O}))$, and $Q\in S^{3}_{0}$ {\it a.e.} in $\mathcal{O}_{T}= [0, T]\times \mathcal{O}$.

\smallskip
\item[(ii)] Equations (\ref{c})$-$(\ref{Q}) are valid in $\mathcal{D}^{'}(\mathcal{O}_{T})$.
Moreover, (\ref{rho}) is valid in $\mathcal{D}^{'}(0, T; \R^{3})$, if $(\rho, u)$ are extended to be zero on $\R^{3}\setminus \mathcal{O}$.

\smallskip
\item[(iii)] Energy $E(t)$ is locally integrable on $(0, T)$ and satisfies the energy inequality:
\begin{align*}
&\frac{\md}{\mathrm{d}t}E(t)+\frac{D_{0}}{2}\|\na c\|_{L^{2}}^{2} +\frac{\mu}{2} \|\nabla u\|_{L^2}^2 +(\nu +\mu)\|\mathrm{div}\, u\|_{L^2}^2+\frac{\Gamma}{2} \|\D Q\|_{L^{2}}^{2}+\frac{c_{*}^{2}\Gamma}{2} \|Q\|_{L^{6}}^{6}\\
&\,\,\leq C\big(\|u\|_{L^{2}}^{2}+\|\na Q\|_{L^{2}}^{2}+\| Q\|_{L^{2}}^{2}+\|Q\|_{L^{4}}^{4}\big)  \qquad\,\, \mbox{in $\mathcal{D}^{'}(0, T)$},
\end{align*}
where
\begin{align*}
E(t):=\int_{\mathcal{O}}\Big(\frac{1}{2}|c|^{2}+\frac{1}{2}\rho |u|^{2} +\frac{\rho^{\gamma}}{\gamma-1}
    +\frac{1}{2}|Q|^{2}+\frac{1}{2}|\na Q|^{2}+\frac{c_{*}}{4}|Q|^{4}\Big)\mathrm{d}x.
\end{align*}

\item[(iv)] Equation (\ref{rho}) is satisfied in the sense of renormalized solutions; that is, for any function $g\in C^{1}(\R)$ with the property:
\be\label{g}
g'(z)\equiv 0\qquad  \mbox{for all $z\geq M$ for a sufficiently large constant $M$},
\en
then
\be\label{renormalized}
\del_{t}g(\rho)+\mathrm{div}\,(g(\rho)u)+\big(g'(\rho)\rho -g(\rho)\big)\mathrm{div}\, u=0
\qquad\,\,  \mbox{in $\mathcal{D}^{'}(0, T)$}.
\en
\end{enumerate}
\end{Definition}

Our main result reads:

\begin{Theorem}\label{main-thm}
Let $\gamma>\frac{3}{2}$ and $\mathcal{O}\subset\R^3$ be a bounded domain of the class $C^{2+\tau}$, $\tau>0$. Assume that  the initial data function $(c_{0}, \rho_{0}, m_{0}, Q_{0})(x)$ satisfies
the compatibility conditions \eqref{compat-condition}.
Then, for any $T>0$, problem \eqref{c}$-$\eqref{B-condition} admits a finite-energy weak solution
$(c, \rho, u, Q)(t,x)$ on $\mathcal{O}_{T}$.
\end{Theorem}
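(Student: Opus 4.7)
The plan is to construct the solution by a three-level approximation scheme based on Faedo--Galerkin, following the framework of Feireisl--Novotn\'y--Petzeltov\'a \cite{F-N-P-2001} for compressible Navier--Stokes, but with substantial additional work to absorb the couplings with $c$ and $Q$. The approximate problem is parameterised by three small parameters $(n,\varepsilon,\delta)$: at level $\delta>0$ I add the artificial pressure $\delta\rho^{\beta}$ (with $\beta$ large) and replace the initial density by $\rho_{0,\delta}\ge \delta>0$ to suppress vacuum; at level $\varepsilon>0$ I regularise the continuity equation by $\partial_t\rho+\mathrm{div}(\rho u)=\varepsilon\Delta\rho$ with Neumann data, which makes it parabolic and gives higher regularity on $\rho$; and at level $n$ I project the momentum equation onto the span $X_n$ of the first $n$ eigenfunctions of the Dirichlet Laplacian. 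The unknowns $c$ and $Q$ are solved globally in time from the linear(-ised) parabolic equations \eqref{c} and \eqref{Q} with $u\in X_n$ frozen, via a standard fixed-point/contraction argument; the $L^\infty$ bound on $c$ (the maximum principle, Lemma \ref{solution-Q}) and the $H^{1}\cap H^{2}$ bounds on $Q$ are used to close a Schauder fixed point on a small time interval, which is then extended globally by the energy estimate.

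On the Galerkin level, the crucial step is the derivation of an $n$-uniform energy identity. Instead of testing the $Q$-equation with $-H[Q,c]$, I test with $-(\Delta Q-Q-c_*Q\tr(Q^2))$, exactly as signalled after \eqref{Q}. This choice (i) keeps the concentration $c$ out of the bulk free energy, so the resulting energy $E(t)$ in Definition 1.1(iii) is manifestly non-negative, and (ii) generates, upon summing with the natural energy identity for $(\rho,u)$ and the $L^2$ identity for $c$, cancellations between $\nabla\cdot(Q\Delta Q-\Delta Q Q)$, $\nabla\cdot\bigl(F(Q)I_3-\nabla Q\odot\nabla Q\bigr)$, and the co-rotational terms $Q\Omega-\Omega Q$. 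These are precisely the algebraic identities used in Lemma \ref{estimate-matrix} and equation \eqref{cancel} of Proposition \ref{energy-inequality}, which rely on the symmetry and tracelessness of $Q$. The active stress $\sigma_*\nabla\cdot(c^2Q)$ is absorbed as a lower order term by H\"older and Young inequalities once the $L^\infty$ bound on $c$ is in hand, producing the Gronwall-type right-hand side $C(\|u\|_{L^2}^2+\|\nabla Q\|_{L^2}^2+\|Q\|_{L^2}^2+\|Q\|_{L^4}^4)$. The energy inequality then yields uniform $n$-independent bounds on $\sqrt{\rho}u\in L^\infty_tL^2_x$, $\rho\in L^\infty_tL^\gamma_x\cap L^\infty_tL^\beta_x$ (with extra $\delta\rho^\beta$), $u\in L^2_tH^1_{0,x}$, $c\in L^\infty_tL^2_x\cap L^2_tH^1_x$, and $Q\in L^\infty_tH^1_x\cap L^2_tH^2_x$.

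The passage $n\to\infty$ is then standard: Aubin--Lions gives strong compactness of $u_n$ in $L^2_{t,x}$ and of $c_n,Q_n$ in $L^2_tH^1_x$, which together with the $\varepsilon$-regularity of $\rho_n$ is enough to pass to the limit in all nonlinearities. Next, at the level $\varepsilon\to 0$ I recover solutions of the original hyperbolic continuity equation; here strong convergence of $\rho$ is obtained by DiPerna--Lions renormalisation \eqref{renormalized}, which is available because $\rho_\varepsilon$ is still sufficiently integrable due to $\delta\rho^\beta$. The final and most delicate stage is $\delta\to 0$: one needs strong $L^1_{t,x}$ convergence of $\rho^\gamma$ to identify the pressure. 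Following Lions \cite{L-1996}, I use the effective viscous flux identity, which demands that the force in the momentum equation lies in $L^2_tH^{-1}_x$. This is where the a priori regularity of $Q$ (only $H^1_x$ in space, not $H^2_x$ uniformly in time) is insufficient for $\nabla\cdot(Q\Delta Q-\Delta Q Q)$ on its own, but the same cancellation as in the energy identity shows that $\nabla\cdot(Q\Delta Q-\Delta Q Q)+\nabla\cdot(F(Q)I_3-\nabla Q\odot\nabla Q)$ can be rewritten in a divergence form involving only products of $\nabla Q$ and $Q$, belonging to $L^2_tH^{-1}_x$. The active stress $\sigma_*\nabla\cdot(c^2Q)$ lies in $L^2_tH^{-1}_x$ directly because $c\in L^\infty$ and $Q\in L^\infty_tL^2_x$. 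Thus the effective viscous flux argument applies and yields $\overline{\rho^\gamma}=\rho^\gamma$ a.e., completing the proof.

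The main obstacle, and the place where most new work sits, is precisely this last step: reconciling the low regularity of $Q$ (in particular the absence of any uniform $L^2_tH^2_x$ bound after $\delta\to 0$ is a potential danger) with the $H^{-1}$ requirement for Lions' argument. The algebraic cancellations among the elastic, distortion and co-rotational stresses -- ultimately a consequence of $Q=Q^\top$ and $\tr Q=0$ -- are what make this possible, and verifying them rigorously (in the limits and not just formally) together with the renormalisation step for the continuity equation will be the technical heart of the argument.
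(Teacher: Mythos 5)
Your plan follows essentially the same route as the paper: the three-level Faedo--Galerkin/artificial-viscosity/artificial-pressure scheme of \cite{F-N-P-2001}, the multiplier $-(\D Q-Q-c_{*}Q\,\tr(Q^{2}))$ for the $Q$-equation, the maximum principle for $c$, the cancellation identities coming from the symmetry and tracelessness of $Q$ (Lemma \ref{estimate-matrix} and \eqref{cancel}), and the effective viscous flux to identify the pressure. The one point to adjust is the $\ve\to 0$ stage: renormalisation alone does not produce strong convergence of $\rho_{\ve}$ --- the paper invokes the effective viscous flux identity (Lemma \ref{2}) there as well, exactly as in the later $\dl\to 0$ stage.
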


We prove Theorem \ref{main-thm} by the aforementioned  three-level approximations,
including the Faedo-Galerkin approximation, artificial viscosity, artificial pressure,
as well as the weak convergence argument,
in the spirit of \cite{F-N-P-2001,F-2004}.
This approach was used to construct weak solutions to the compressible 
Beris-Edwards in \cite{W-X-Y-2015}.
As discussed above, new techniques are needed to overcome the difficulties arising from the concentration
equation and its coupling with both the fluid and Q-tensor equations.
Firstly, by using the Faedo-Galerkin approximation,
for any fixed $u_{n}$ in the finite-dimensional space $C(0, T; X_{n})$ (see \eqref{X-n}),
we obtain a unique solution $(\rho[u_{n}], c[u_{n}], Q[u_{n}])$ of
the initial-boundary value problem \eqref{c-FG}--\eqref{rho-FG} and \eqref{Q-FG}.
In Lemma \ref{solution-Q}, system \eqref{Q-system-FG} has complicated interaction terms which cause difficulties
in the proof of both the uniqueness of the solution $(c[u_{n}], Q[u_{n}])$
and the traceless property of $Q[u_{n}]$.
The tracelessness of the $Q$-tensor is an important property that guarantees the validity
of the cancellation rules in order to treat the highly nonlinear terms caused by
the appearance of the concentration and the $Q$-tensor.
In the proof of the tracelessness,
we make the $L^2$-estimate of $\tr\, Q$  from an energy inequality,
which implies the tracelessness of $Q$ when combined Gronwall's inequality and the tracelessness of the initial condition of $Q$.
As we mentioned before, the highly nonlinear interaction terms cause difficulties in this procedure.
Here we can see that the concentration equation has the maximum principle,
which provides the $L^{\infty}$--bound for the concentration.
Moreover, we show that the solutions to system \eqref{Q-system-FG}
are in $L^{\infty}(0, T; H^{1}(\mathcal{O}))\cap L^{2}(0, T; H^{2}(\mathcal{O}))$,
which provides sufficient regularity for the interaction terms
so that they stay bounded and we can prove the uniqueness of the solution and the traceless property of $Q[u_{n}]$.
Then, substituting $(\rho[u_{n}], c[u_{n}], Q[u_{n}])$  into the variational problem of the momentum equation,
we can construct a contraction map and obtain a local solution $(\rho_{n}, c_{n}, u_{n}, Q_{n})$
of the approximation system \eqref{c-FG}--\eqref{Q-FG} on the time interval $[0, T_{n}]$.
Moreover, the global existence of solutions follows from the uniform energy estimate
of the approximation system. 
In order to pass to the limit for solutions $(\rho_{n}, c_{n}, u_{n}, Q_{n})$ as $n\to \infty$
to obtain a solution $(\rho_{\ve,\dl}, c_{\ve, \dl}, u_{\ve, \dl}, Q_{\ve, \dl})$ for the approximation system
in the infinite space, we need enough integrability of the solutions.
First, the maximum principle satisfied by the concentration equation provides sufficient integrability for the concentration $c_{n}$.
This, together with the regularity of $(u_{n}, Q_{n})$ obtained in the uniform energy estimate,
gives enough compactness for the nonlinear interaction terms of 
$c_{n}$ and 
$Q_{n}$ in our system.
It actually also plays a crucial role
when the artificial viscosity and the artificial pressure tend to zero.
With  the above results and the artificial pressure and viscosity in the approximation system,
we have enough regularity and integrability of the density.
These integrability and compactness results allow us to pass to the limit as $n\to \infty$ and
obtain a solution for the approximation system in the infinite space.
Secondly, we let the artificial viscosity $\ve$ tend to zero to recover the original continuity equation.
Here we employ the convergence of the effective viscous flux sequence to deal with the lack of regularity
of the density sequence to retrieve the compactness results of the solutions.
Lastly, we pass to the limit of the vanishing artificial pressure sequence to obtain a finite-energy weak solution
of the original problem, including the vacuum case.
Again, we do not have enough integrability for the density.
Similarly to the vanishing artificial viscosity procedure, the convergence of the effective viscous flux sequence
gives us the much needed higher regularity for the density.
We remark that although the weak convergence argument is similar to that in \cite{F-N-P-2001},
owing to the extra terms and difficulties, we will provide most of the details for the sake
of completeness and convenience to the reader.

The rest of the paper is organized as follows:
In \S 2, we derive the dissipation principle and {\it a priori} estimates.
In \S 3, we employ the Faedo-Galerkin approximation to obtain a solution of the approximation problem \eqref{c-FG}--\eqref{Q-FG}.
In \S 4, we let the artificial viscosity $\ve \to 0$ to recover the solution to the hyperbolic continuity equation.
In \S 5, we pass the limit $\dl \to 0$ in the artificial pressure  to complete the proof of Theorem \ref{main-thm}.
In Appendix \ref{appendix-a}, we 
list some important lemmas and state important preliminaries that we use intensively in the paper.

\section{The Dissipation Principle and A Priori Estimates}\label{a-priori-estimates}

In this section,
we derive the dissipation principle and {\it a priori} estimates in Proposition \ref{energy-inequality} 
for system (\ref{c})$-$(\ref{Q}).

For the sake of convenience,  we first introduce some notations.
We denote the Sobolev space by $H^{k}(\mathcal{O})$ for integer $k\geq 1$, equipped with norm $\|\cdot\|_{H^{k}_{x}}$,
and $H^{-k}(\mathcal{O})$ is the dual space of $H^{k}_{0}(\mathcal{O})$.
Denote by $(\cdot, \cdot)$ the inner product in $L^2 (\mathcal{O})$:
If $a$ and $b$ are vectors, then
\begin{equation*}
(a, b)=\int_{\mathcal{O}}a(x)\cdot b(x)\,\md x;
\end{equation*}
and if $A, B$ are matrices, then
\begin{equation*}
(A, B)=\int_{\mathcal{O}}A:B\, \md x =\int_{\mathcal{O}}\tr (AB)\,\md x
\end{equation*}
with $A:B=\tr (AB)$.
Denote by $S_{0}^{3}\subset \mathbb{M}^{3\times 3}$ the space of $Q$-tensors in $\R^3$; that is,
\begin{equation*}
S_{0}^{3}:=\left\{Q\in \mathbb{M}^{3\times 3}: \;  Q^{ij}=Q^{ji}, \ \tr(Q)=0, \ i, j=1,2,3\right\}.
\end{equation*}
Define the norm of a matrix by using the Frobenius norm  denoted by
\begin{equation*}
|Q|^{2}:=\tr(Q^{2})=Q^{ij}Q^{ij}.
\end{equation*}
With respect to this norm, we define the Sobolev spaces for the $Q$-tensors:
\begin{equation*}
H^{1}(\mathcal{O}, S_{0}^{3}):=\left\{Q: \mathcal{O}\rightarrow S_{0}^{3}\, : \  \int_{\mathcal{O}}\big(|\na Q(x)|^{2}+|Q(x)|^{2}\big)\,dx <\infty \right\}.
\end{equation*}
Set $|\na Q|^2:=\del_{k}Q^{ij}\del_{k}Q^{ij}$ and $|\D Q|^2 :=\D Q^{ij}\D Q^{ij}$.

\begin{Proposition} \label{energy-inequality}
Let $(c, \rho, u, Q)$ be a smooth solution of  problem \eqref{rho}--\eqref{compat-condition}.
Then there exists $C>0$ depending only on $(D_{0}, b, c_{*},\sigma_{*}, \mu, \nu, \Gamma)$ and the initial data
such that, for a given $T$,
\be\label{dissipative}
\begin{split}
&\frac{\md}{\mathrm{d}t}E(t)+\frac{D_{0}}{2}\|\na c\|_{L^{2}}^{2} +\frac{\mu}{2} \|\nabla u\|_{L^2}^2 +(\nu +\mu)\|\mathrm{div}\, u\|_{L^2}^2+\frac{\Gamma}{2} \|\D Q\|_{L^{2}}^{2}+\frac{c_{*}^{2}\Gamma}{2} \|Q\|_{L^{6}}^{6}\\
&\leq C\big(\|u\|_{L^{2}}^{2}+\|\na Q\|_{L^{2}}^{2}+\| Q\|_{L^{2}}^{2}+\|Q\|_{L^{4}}^{4}\big) \qquad \,\,\,\mbox{for all $t\in (0, T)$}.
\end{split}
\en
In addition,  if $(c_{0}, \rho_{0}, m_{0}, Q_{0})(x)\in L^{\infty}\times L^{\gamma}\times L^{2}\times H^{1}$, then
\begin{align}
&\|c\|_{L^{\infty}(0, T; L^\infty(\mathcal{O}))}+\|c\|_{L^{\infty}(0, T; L^{2}(\mathcal{O}))\cap L^{2}(0, T; H^{1}(\mathcal{O}))}\leq C,\\
&\|\rho\|_{L^{\infty}(0, T; L^{\gamma}(\mathcal{O}))}\leq C,\\
&\|\sqrt{\rho}u\|_{L^{\infty}(0,
T; L^{2}(\mathcal{O}))}^2 +\frac{\mu }{2}\|\na u\|_{L^{2}(0, T; L^{2}(\mathcal{O}))}^2 +(\nu +\mu)\|\mathrm{div}\, u\|_{L^{2}(0, T; L^{2}(\mathcal{O}))}^2 \leq C,\\
&\|Q\|_{L^{\infty}(0, T; H^{1}(\mathcal{O})\cap L^{4}(\mathcal{O}))\cap L^{2}(0, T; H^{2}(\mathcal{O}))\cap L^{6}(0, T; L^{6}(\mathcal{O}))}\leq C.
\label{Q-apriori-estimate}
\end{align}
\end{Proposition}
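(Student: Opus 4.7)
The plan is to derive the dissipative identity by testing each PDE in \eqref{c}--\eqref{Q} against a suitable quantity, use two algebraic cancellations to eliminate the highest-order coupling terms, and then absorb the remaining nonlinearities by means of Young's inequality together with the $L^\infty$-bound on $c$ (which itself follows, as a preliminary step, from the maximum principle applied to the linear advection-diffusion equation \eqref{c} with homogeneous Neumann data, since the initial datum is sandwiched between $\underline{c}$ and $\overline{c}$).

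First I would multiply \eqref{c} by $c$, \eqref{velocity} by $u$ (using \eqref{rho} to handle the density terms in the usual compressible Navier--Stokes fashion), and \eqref{Q} by the multiplier $M:=-(\Delta Q-Q-c_{*}Q\,\tr(Q^{2}))$. This is the paper's key choice, differing from the variational one $-H[Q,c]$: it is designed so that $\int_{\mathcal{O}}\partial_{t}Q:M\,\md x$ is exactly the time derivative of the orientational part $\int_{\mathcal{O}}(\tfrac12|\nabla Q|^{2}+\tfrac12|Q|^{2}+\tfrac{c_{*}}{4}|Q|^{4})\,\md x$ of the energy $E(t)$, while avoiding both the $c$-dependent quadratic and the sign-indefinite cubic $\tr(Q^{3})$ in the Landau-de Gennes potential. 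Summing the three tested identities produces $\tfrac{\md}{\md t}E(t)+D_{0}\|\nabla c\|^{2}+\mu\|\nabla u\|^{2}+(\nu+\mu)\|\mathrm{div}\,u\|^{2}$ on the LHS, while the RHS splits into: $\tfrac12\int c^{2}\mathrm{div}\,u$, $\int u\cdot(\nabla\cdot\tau)$, $\int u\cdot\nabla\cdot(Q\Delta Q-\Delta Q\,Q)$, $\sigma_{*}\int u\cdot\nabla\cdot(c^{2}Q)$, $-\int(u\cdot\nabla)Q:M$, $-\int(Q\Omega-\Omega Q):M$, and $\Gamma\int H[Q,c]:M$.

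The heart of the argument is two cancellations. (a) A direct index calculation, exploiting the explicit form of $F(Q)=\tfrac12|\nabla Q|^{2}+\tfrac12\tr(Q^{2})+\tfrac{c_{*}}{4}\tr^{2}(Q^{2})$, yields $\partial_{j}\tau^{ji}=M^{kl}\partial_{i}Q^{kl}$, so $\int u\cdot(\nabla\cdot\tau)\,\md x=\int(u\cdot\nabla)Q:M\,\md x$, exactly cancelling the transport term on the $Q$-side. (b) For the nematic stress $\nabla\cdot(Q\Delta Q-\Delta Q\,Q)$ paired against $u$, I would first use the trace identities $(Q\Omega-\Omega Q):Q=0$ and $(Q\Omega-\Omega Q):(Q\,\tr(Q^{2}))=0$ (cyclicity of trace combined with the symmetry of $Q$) to reduce $-\int(Q\Omega-\Omega Q):M$ to $\int(Q\Omega-\Omega Q):\Delta Q$, and then invoke the Paicu-Zarnescu-type matrix identity (Lemma \ref{estimate-matrix}) to finish the cancellation. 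Tracelessness and symmetry of $Q$ are essential in both steps, as emphasised in the introduction.

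After cancellation the RHS reduces to $\tfrac12\int c^{2}\mathrm{div}\,u+\sigma_{*}\int u\cdot\nabla\cdot(c^{2}Q)+\Gamma\int H[Q,c]:M$. Expanding $H:M$ yields the two good dissipative terms $-\Gamma\|\Delta Q\|_{L^{2}}^{2}$ and $-c_{*}^{2}\Gamma\|Q\|_{L^{6}}^{6}$, half of which I transfer to the LHS as $\tfrac{\Gamma}{2}\|\Delta Q\|^{2}+\tfrac{c_{*}^{2}\Gamma}{2}\|Q\|_{L^{6}}^{6}$, reserving the other half to absorb the mixed terms $\int(1+\tfrac{c-c_{*}}{2})\Delta Q:Q$, $c_{*}\int|Q|^{2}\Delta Q:Q$, $b\int Q^{2}:\Delta Q$, and $b\int\tr(Q^{3})(1+c_{*}|Q|^{2})$ via Young's inequality; each is controlled by an $\varepsilon$-fraction of $\|\Delta Q\|^{2}+\|Q\|_{L^{6}}^{6}$ plus $C(\|\nabla Q\|^{2}+\|Q\|^{2}+\|Q\|_{L^{4}}^{4})$, thanks to the $L^{\infty}$-bound on $c$. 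Similarly, $\tfrac12\int c^{2}\mathrm{div}\,u$ and $\sigma_{*}\int u\cdot\nabla\cdot(c^{2}Q)$ are handled by Young, the former being absorbed into $(\nu+\mu)\|\mathrm{div}\,u\|^{2}$ and a constant, the latter into $\tfrac{\mu}{2}\|\nabla u\|^{2}$ and $C\|Q\|^{2}$. This delivers \eqref{dissipative}. Gronwall's inequality applied to $\tfrac{\md}{\md t}E(t)\leq C(1+E(t))$ controls $E$ in $L^{\infty}(0,T)$, giving the stated bounds on $\rho$, $\sqrt{\rho}u$, $Q$, and $c$; the time-integrated dissipation gives the $L^{2}(L^{2})$-bounds on $\nabla u$, $\mathrm{div}\,u$, $\nabla c$, $\Delta Q$, and the $L^{6}(L^{6})$-bound on $Q$, with the $L^{2}(H^{2})$-bound following from Neumann elliptic regularity $\|Q\|_{H^{2}}\leq C(\|\Delta Q\|_{L^{2}}+\|Q\|_{L^{2}})$. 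The main obstacle is cancellation (b), where sign-tracking and the explicit use of Lemma \ref{estimate-matrix} are indispensable; everything after is careful energy bookkeeping.
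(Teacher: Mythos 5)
Your plan coincides with the paper's proof in all structural respects: the same multipliers $c$, $u$, and $-(\D Q-Q-c_{*}Q\,\tr(Q^{2}))$, the preliminary $L^\infty$-bound on $c$ from the maximum principle, your cancellation (a) (the pointwise identity $\del_{j}\tau^{ij}=M^{kl}\del_{i}Q^{kl}$ is exactly the integrated computation \eqref{cancel}, i.e.\ $\i_2+\i_3+\i_6+\i_7=0$), your cancellation (b) (the trace identities give $\i_9=0$ and Lemma \ref{estimate-matrix} gives $\i_4+\i_8=0$), and Young's inequality plus Gronwall for the remainder.

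There is, however, one genuine gap: the cross term $2c_{*}\Gamma(Q|Q|^{2},\D Q)$ produced by expanding $\Gamma\int H[Q,c]:M\,\md x$ (the term you list as ``$c_{*}\int|Q|^{2}\D Q:Q$'') \emph{cannot} be absorbed by Young's inequality into the available dissipation. Its Cauchy--Schwarz bound is $2c_{*}\Gamma\|\D Q\|_{L^{2}}\|Q\|_{L^{6}}^{3}=2AB$ with $A=\sqrt{\Gamma}\|\D Q\|_{L^{2}}$ and $B=c_{*}\sqrt{\Gamma}\|Q\|_{L^{6}}^{3}$, and the total dissipation generated by the diagonal terms is exactly $A^{2}+B^{2}=\Gamma\|\D Q\|_{L^{2}}^{2}+c_{*}^{2}\Gamma\|Q\|_{L^{6}}^{6}$; since $2AB\le\l A^{2}+\l^{-1}B^{2}$ requires $\l\le\tfrac12$ and $\l^{-1}\le\tfrac12$ simultaneously if one is to retain $\tfrac{\Gamma}{2}\|\D Q\|_{L^{2}}^{2}+\tfrac{c_{*}^{2}\Gamma}{2}\|Q\|_{L^{6}}^{6}$ on the left of \eqref{dissipative}, the estimate is borderline and fails for every choice of $\l$. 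The paper's resolution is that this term has a sign: integrating by parts,
\begin{equation*}
2c_{*}\Gamma\int_{\mathcal{O}}\D Q^{ij}\,Q^{ij}|Q|^{2}\,\md x
=-2c_{*}\Gamma\int_{\mathcal{O}}|\na Q|^{2}|Q|^{2}\,\md x-c_{*}\Gamma\int_{\mathcal{O}}|\na \tr(Q^{2})|^{2}\,\md x\le 0,
\end{equation*}
so the term ($\i_{13}$ in the paper) is simply discarded. You need this observation; generic Young bookkeeping does not close. A second, purely cosmetic, discrepancy: the paper bounds $\i_{1}=-(u\cd\na c,c)$ by $\|c\|_{L^{\infty}}\|u\|_{L^{2}}\|\na c\|_{L^{2}}\le\tfrac{D_{0}}{4}\|\na c\|_{L^{2}}^{2}+C\|u\|_{L^{2}}^{2}$, which yields exactly the right-hand side of \eqref{dissipative}; your variant $\tfrac12\int c^{2}\mathrm{div}\,u$ absorbed into the $\mathrm{div}$-dissipation would introduce an additive constant and reduce the coefficient $(\nu+\mu)$ on the left, so it proves a slightly weaker (though still serviceable) inequality.
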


\begin{proof}
The $L^\infty$--bound for the concentration $c(t,x)$ follows from the maximum principle and its initial condition $c_0\in L^\infty$.
Using the continuity equation (\ref{rho}) and the boundary equation (\ref{B-condition}),
we have
\begin{align*}
&((\rho u)_{t}, u)-(\rho u\otimes u, \na u)\\
&=\int_{\mathcal{O}}\rho_{t}|u|^{2}\,\md x+\int_{\mathcal{O}}\rho \del_{t}u^{i}u^{i}\,\md x
  -\int_{\mathcal{O}}\rho u^{i}u^{j}\del_{j}u^{i}\,\md x\\
&=\int_{\mathcal{O}}\rho_{t}|u|^2 \,\md x +\frac{1}{2}\int_{\mathcal{O}}\rho \del_{t}|u|^{2}\,\md x
  +\frac{1}{2}\int_{\mathcal{O}}|u|^{2}\mathrm{div}\, (\rho u)\,\md x\\
&=\int_{\mathcal{O}}\rho_{t}|u|^2 \,\md x +\frac{1}{2}\frac{\mathrm{d}}{\md t}\int_{\mathcal{O}}\rho |u|^{2}\,\md x
  -\frac{1}{2}\int_{\mathcal{O}}\rho_{t}|u|^{2}\,\md x +\frac{1}{2}\int_{\mathcal{O}}|u|^{2}\mathrm{div}\, (\rho u)\,\md x\\
&=\frac{1}{2}\frac{\mathrm{d}}{\md t}\int_{\mathcal{O}}\rho |u|^{2}\,\md x
  +\frac{1}{2}\int_{\mathcal{O}}|u|^{2}\big(\rho_{t}+\mathrm{div}\, (\rho u)\big)\,\md x\\
& =\frac{1}{2}\frac{\mathrm{d}}{\md t}\int_{\mathcal{O}}\rho |u|^{2}\,\md x,
\end{align*}
and
\begin{align*}
(\rho^{\gamma}, \mathrm{div}\, u)
& =-\int_{\mathcal{O}}\rho^{\gamma-1}(\rho_{t}+\na \rho \cdot u)\,\md x\\
&=-\frac{1}{\gamma}\frac{\md}{\md t}\int_{\mathcal{O}}\rho^{\gamma}\,\md x
 -\frac{1}{\gamma}\int_{\mathcal{O}}\del_{i}\rho^{\gamma}u^{i}\,\md x
 =-\frac{1}{\gamma}\frac{\md}{\md t}\int_{\mathcal{O}}\rho^{\gamma}\,\md x +\frac{1}{\gamma}\int_{\mathcal{O}}\rho^{\gamma}\mathrm{div}\, u\,\md x,
\end{align*}
which gives
\begin{align*}
(\rho^{\gamma}, \mathrm{div}\, u)=\frac{1}{1-\gamma}\frac{\mathrm{d}}{\mathrm{d}t}\int_{\mathcal{O}}\rho^{\gamma}\,\mathrm{d} x.
\end{align*}
We take the inner product of equation \eqref{c} with $c$,
equation \eqref{velocity} with $u$, and equation \eqref{Q} with $-\big(\D Q-Q-c_{*} Q\,\tr (Q^{2})\big)$ respectively,
sum them up, and then integrate by parts over $\mathcal{O}$ to obtain
\begin{equation*}
\begin{split}
&\frac{\md}{\mathrm{d}t}\int_{\mathcal{O}}\Big(\frac{1}{2}|(c, \sqrt{\rho} u)|^{2}
+\frac{\rho^{\gamma}}{\gamma-1} +\frac{1}{2}|(Q, \na Q)|^{2}
+\frac{c_{*}}{4}|Q|^{4} \Big)\md x
 +D_{0}\|\na c\|_{L^{2}}^{2} \\
&\quad +\mu \|\nabla u\|_{L^2}^2 +(\nu +\mu)\|\mathrm{div}\, u\|_{L^2}^2+\Gamma \|\D Q\|_{L^{2}}^{2}+\Gamma \|\na Q\|_{L^{2}}^{2}
 +c_{*}\Gamma \|Q\|_{L^{4}}^{4}+c_{*}^{2}\Gamma \|Q\|_{L^{6}}^{6}\\[2mm]
&=-(u\cdot \na c, c)
-(\na \cdot (\nabla Q \odot \nabla Q), u)-(\mathrm{F}(Q)\mathrm{I}_{3}, \na u) + (\na \cdot (Q\D Q-\D QQ), u)\\
&\quad -\sigma_{*}(c^{2}Q, \na u)+(u\cdot \nabla Q, \D Q)-(u\cdot \na Q, Q+c_{*}Q|Q|^{2})-(\Omega Q-Q\Omega, \D Q) \\
&\quad +\left(\O Q-Q\O, Q +c_{*}Q|Q|^{2}\right)+\Gamma(\frac{c-c_{*}}{2}Q, \D Q-Q-c_{*} Q\tr (Q^{2}))\\
&\quad -b\Gamma (Q^{2}, \D Q) + b\Gamma (Q^{2}, Q+c_{*} Q\tr (Q^{2}))+2c_{*}\Gamma(Q|Q|^{2}, \D Q)\\
&=\sum_{i=1}^{13}\mathcal{I}_{i}.
\end{split}
\end{equation*}
First, by Lemma \ref{estimate-matrix}, we have
$$
\mathcal{I}_{4} +\mathcal{I}_{8} =0.
$$
By simple calculation,
$$
\i_{2}+\mathcal{I}_{3} +\i_{6}+\mathcal{I}_{7} =0,  \qquad \i_{9}=0,
$$
as shown below:
\be\label{cancel}
\begin{split}
&\i_{2}+\i_{6} +\i_{7}\\
&=-(\na \cdot (\na Q \odot \na Q), u)+(u \cdot \na Q, \D Q)-(u\cdot \na Q, Q+c_{*}Q|Q|^{2})\\
&=-\int_{\mathcal{O}} \del_{j}\big(\del_{i}Q^{kl}\del_{j}Q^{kl}\big)u^{i}\,\mathrm{d}x
  +\int_{\mathcal{O}} u^{i}\del_{i}Q^{kl}\D Q^{kl}\,\mathrm{d}x
   -\int_{\mathcal{O}}u^{i}\del_{i}Q^{kl}\big(Q^{kl}+c_{*}Q^{kl}|Q|^2\big)\,\md x \\
&=-\int_{\mathcal{O}} \big(\del_{i}\del_{j}Q^{kl}\del_{j}Q^{kl}u^{i}+\del_{i}Q^{kl}\del_{j}\del_{j} Q^{kl}u^{i}\big)\,\mathrm{d}x
+\int_{\mathcal{O}} u^{i}\del_{i}Q^{kl}\D Q^{kl}\,\mathrm{d}x \\
&\,\quad-\int_{\mathcal{O}}u^{i}\del_{i}\big(\frac{1}{2}|Q|^2 +\frac{c_{*}}{4}\tr^{2}(Q^2)\big)\,\md x \\
&=-\int_{\mathcal{O}} \del_{i}\del_{j}Q^{kl}\del_{j}Q^{kl}u^{i}\,\mathrm{d}x
+\int_{\mathcal{O}}\big(\frac{1}{2}|Q|^2 
+\frac{c_{*}}{4}\tr^{2}(Q^2)\big)\mathrm{div}\, u \,\md x\\
&=\int_{\mathcal{O}} \big(\frac{1}{2}|\na Q|^{2}+\frac{1}{2}|Q|^2 
+\frac{c_{*}}{4}\tr^{2}(Q^2)\big)\mathrm{div}\, u \,\md x =-\i_{3},
\end{split}
\en
and
\begin{align*}
\i_{9}&=(\O Q-Q\O, Q+c_{*}Q|Q|^{2})=-(\O Q+Q\O, Q+c_{*}Q|Q|^{2})+2(\O Q, Q+c_{*}Q|Q|^{2})=0,
\end{align*}
where we have used the fact that $Q$ is symmetric and $\Omega$ is skew-symmetric.

Moreover, by the Young inequality, we have
\begin{align*}
|\i_{1}|&=|(u \cdot \na  c,
c)|\leq C\|u\|_{L^{2}}\|\na c\|_{L^{2}}\|c\|_{L^{\infty}}\leq \frac{D_{0}}{4}\|\na c\|_{L^{2}}^{2}+C\|u\|_{L^{2}}^{2},\\
|\i_{5}|&=|\sigma_{*}(c^{2}Q, \na u)| \leq  C\|c\|_{L^{\infty}}^{2}\|\na u\|_{L^{2}}\| Q\|_{L^{2}}\leq \frac{\mu}{4}\|\na u\|_{L^{2}}^{2}+C\| Q\|_{L^{2}}^{2},\\
|\i_{10}|&=\Gamma\big|(\frac{c-c_{*}}{2}Q, \D Q-Q-c_{*} Q\,\tr (Q^{2}))\big|
  \leq \frac{\Gamma}{8}\|\D Q\|_{L^{2}}^{2}+C\|Q\|_{L^{2}}^{2}+C\|Q\|_{L^{4}}^{4},\\
|\i_{11}|&=| -b\Gamma (Q^{2}, \D Q)|\leq \frac{\Gamma}{8}\|\D Q\|_{L^{2}}^{2}+C\|Q\|_{L^{4}}^{4},\\
|\i_{12}|&=|b\Gamma (Q^{2}, Q+c_{*} Q\,\tr (Q^{2}))|=\big|b\Gamma \int_{\mathcal{O}}\big(\tr(Q)\big)^3\,\mathrm{d}x +bc_{*}\Gamma \int_{\mathcal{O}}\big(\tr(Q)\big)^3 |Q|^2 \,\mathrm{d}x\big|\\
&\leq \frac{c^2 \Gamma}{2} \|Q\|_{L^6}^{6}+C\|Q\|_{L^{2}}^{2} +C\|Q\|_{L^4}^{4},\\
\i_{13}&=2c_{*}\Gamma(Q|Q|^{2}, \D Q)=2c_{*}\Gamma \int_{\mathcal{O}}\del_{kk} Q^{ij}Q^{ij}\tr(Q^2)\, \mathrm{d}x\\
&=-2c_{*}\Gamma \int_{\mathcal{O}}\del_{k} Q^{ij}\del_{k}Q^{ij}\tr(Q^2)\, \mathrm{d}x
  -2c_{*}\Gamma \int_{\mathcal{O}}\del_{k} Q^{ij}Q^{ij}\del_{k}\tr(Q^2)\, \mathrm{d}x\\
&=-2c_{*}\Gamma \int_{\mathcal{O}}|\na Q|^{2}|Q|^2 \,\mathrm{d}x
 -c_{*}\Gamma \int_{\mathcal{O}}|\na \tr(Q^2)|^2\, \mathrm{d}x\leq 0.
\end{align*}

Combining all the above estimates, we obtain the desired result.
\end{proof}

\begin{Corollary}\label{Q-estimate-addition}
For any smooth solution $(c, \rho, u, Q)$ of problem \eqref{c}--\eqref{compat-condition},
\begin{align*}
Q \in L^{10}(\mathcal{O}_{T})\cap L^{\infty}(0, T; H^{1}(\mathcal{O}))\cap L^{2}(0, T; H^{2}(\mathcal{O})), \quad \na Q\in L^{\frac{10}{3}}(\mathcal{O}_{T}).
\end{align*}
\end{Corollary}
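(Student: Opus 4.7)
The plan is to read off both new integrability statements as direct corollaries of the regularity $Q\in L^\infty(0,T;H^1(\mathcal{O}))\cap L^2(0,T;H^2(\mathcal{O}))$ already provided by Proposition~\ref{energy-inequality}, by interpolating the spatial norms and then integrating in time. There is no need to revisit the $Q$--equation itself: both bounds are purely functional.

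For the claim $Q\in L^{10}(\mathcal{O}_T)$, I would use the three-dimensional Sobolev embedding $H^{6/5}(\mathcal{O})\hookrightarrow L^{10}(\mathcal{O})$ (since $3(\tfrac{1}{2}-\tfrac{1}{10})=\tfrac{6}{5}$) together with the interpolation inequality $\|Q\|_{H^{6/5}}\leq C\|Q\|_{H^1}^{4/5}\|Q\|_{H^2}^{1/5}$. This yields the pointwise-in-$t$ estimate
\begin{equation*}
\|Q(t)\|_{L^{10}_x}^{10}\leq C\,\|Q(t)\|_{H^1_x}^{8}\,\|Q(t)\|_{H^2_x}^{2}.
\end{equation*}
Integrating in $t\in (0,T)$, pulling the first factor out using $Q\in L^\infty(0,T;H^1)$, and recognising the remaining factor as the square of the $L^2(0,T;H^2)$--norm closes the step.

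For $\nabla Q\in L^{10/3}(\mathcal{O}_T)$, the starting observation is that $\nabla Q\in L^\infty(0,T;L^2(\mathcal{O}))$, while $Q\in L^2(0,T;H^2(\mathcal{O}))$ combined with $H^1\hookrightarrow L^6$ in three dimensions yields $\nabla Q\in L^2(0,T;L^6(\mathcal{O}))$. Interpolating the Lebesgue scale between $L^2$ and $L^6$ at the endpoint $\tfrac{3}{10}=\tfrac{2}{5}\cdot\tfrac{1}{2}+\tfrac{3}{5}\cdot\tfrac{1}{6}$ produces
\begin{equation*}
\|\nabla Q(t)\|_{L^{10/3}_x}^{10/3}\leq \|\nabla Q(t)\|_{L^2_x}^{4/3}\,\|\nabla Q(t)\|_{L^6_x}^{2},
\end{equation*}
and a final integration in time, with the $L^\infty(0,T;L^2)$ factor taken out, delivers the bound in terms of $\|Q\|_{L^\infty(0,T;H^1)}^{4/3}\|Q\|_{L^2(0,T;H^2)}^{2}$.

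There is no genuine obstacle: everything reduces to matching interpolation exponents so that the time-integrated bounds close with only the two norms $L^\infty(0,T;H^1)$ and $L^2(0,T;H^2)$ supplied by Proposition~\ref{energy-inequality}. The single point requiring care is to choose the interpolation weights so that the factor involving $\|Q\|_{H^2_x}$ appears exactly to the power $2$, since no higher $L^p_tH^2_x$ bound is available.
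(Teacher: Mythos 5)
Your proposal is correct and is essentially the argument the paper gives: the paper applies the Gagliardo--Nirenberg inequality to get $\|Q\|_{L^{10}}\lesssim\|Q\|_{L^{6}}^{4/5}\|\Delta Q\|_{L^{2}}^{1/5}+\|Q\|_{L^{6}}$ and $\|\nabla Q\|_{L^{10/3}}\lesssim\|\nabla Q\|_{L^{2}}^{2/5}\|\Delta Q\|_{L^{2}}^{3/5}+\|\nabla Q\|_{L^{2}}$, then integrates in time exactly as you do, and your fractional-Sobolev/Lebesgue interpolation yields the same exponents so that the $H^2$ factor enters precisely to the power $2$.
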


\begin{proof} From estimate (\ref{Q-apriori-estimate}) and the Gagliardo-Nirenberg inequality in Lemma \ref{gn-inequality}, we have
\begin{eqnarray*}
&& \|Q\|_{L^{10} (\mathcal{O})}\leq C\|Q\|_{L^{6}(\mathcal{O})}^{\frac{4}{5}}\|\D Q\|_{L^{2}(\mathcal{O})}^{\frac{1}{5}}+C\|Q\|_{L^{6}(\mathcal{O})},\\
&& \|\na Q\|_{L^{\frac{10}{3}}(\mathcal{O})}\leq C\|\na Q\|_{L^{2}(\mathcal{O})}^{\frac{2}{5}}\|\D Q\|_{L^{2}(\mathcal{O})}^{\frac{3}{5}}+C\|\na Q\|_{L^{2}(\mathcal{O})}.
\end{eqnarray*}
Then the proof is complete.
\end{proof}

\section{The Faedo-Galerkin Approximation}\label{weak-solutions}

In this section, for fixed $\dl>0$ and $\ve >0$, we solve the following approximation problem:
\begin{align}
&\del_{t}c+(u\cdot \na) c=D_{0}\D c, \label{c-FG}\\
&\del_{t}\rho+\na \cdot (\rho u)=\ve \D \rho,\label{rho-FG}\\
&\del_{t}(\rho u)+\na \cdot (\rho u\otimes u)+\na \rho^{\gamma}+\delta \na \rho^{\b} +\ve (\na \rho \cdot \na)u\nonumber\\
&\quad \quad \quad \quad =\mu \D u+(\nu +\mu)\na \mathrm{div}\, u
    +\na \cdot (\mathrm{F}(Q)\mathrm{I}_{3}-\na Q\odot \na Q)\nonumber\\
& \quad \quad \quad \quad\quad +\nabla \cdot (Q\D Q-\D Q Q)+\sigma_{*} \na \cdot (c^{2}Q),\label{u-FG}\\
&\partial_{t}Q+(u\cdot \nabla )Q+Q\Omega-\Omega Q
=\Gamma H[Q, c],\label{Q-FG}
\end{align}
complemented with the modified initial conditions:
\begin{align}
&c|_{t=0}=c_{0}\in H^{1}(\mathcal{O}), \qquad 0<\underline{c}\leq c_{0}(x)\leq \bar{c},\label{I-C-c}\\
&\rho|_{t=0}=\rho_{0}\in C^{3}(\bar{\mathcal{O}}), \qquad 0<\underline{\varrho}\leq \rho_{0}(x)\leq \bar{\varrho},\label{I-C-rho-a}\\
&(\rho u)|_{t=0}=m_{0}(x)\in C^{2}(\bar{\mathcal{O}}), \label{I-C-rho-b}\\
&Q|_{t=0}=Q_{0}(x)\in H^{1}(\mathcal{O}), \qquad Q_{0}\in S^{3}_{0} \,\,\, \mbox{{\it a.e.} in } \mathcal{O},\label{I-C-Q}
\end{align}
and the boundary conditions on $\del \mathcal{O}$ with unit outward normal $\vec{n}$:
\begin{align}
&\na c \cdot \vec{n}|_{\del \mathcal{O}}=0, \qquad\,\, \na \rho \cdot \vec{n}|_{\del \mathcal{O}}=0,\label{B-C-rho}\\
& u|_{\del \mathcal{O}}=0, \qquad\qquad\,\,  \frac{\del Q}{\del \vec{n}}|_{\del \mathcal{O}}=0,\label{B-C-uQ}
\end{align}
where $\underline{c}$, $\bar{c}$, $\underline{\varrho}$, and $\bar{\varrho}$ are positive constants.

\begin{Remark}
We now remark 
on the extra terms in the approximation system.
The vanishing viscosity $\ve \D \rho$ converts equation \eqref{rho}
from the hyperbolic to parabolic type and provides the higher regularity of $\rho$.
The term, $\dl \na \rho^{\b}$, is added to obtain the higher integrability of $\rho$ for some constant $\b>0$.
The extra term $\ve \na \rho \cdot \na u$ is needed to cancel some {\it bad} terms  that do not vanish in the energy estimates.
\end{Remark}

\subsection{The Neumann problem for the density and $Q$-tensor}

We first state the following existence results, which can be found in \cite{F-N-P-2001}.

\begin{Lemma}\label{solution-rho}
Assume that $u \in C([0, T]; C^{2}(\bar{\mathcal{O}}, \R^{3}))$ with $u|_{\del \mathcal{O}}=0$.
Then there exists the following mapping $S=S[u]${\rm :}
\begin{equation*}
S: C([0, T]; C^{2}(\bar{\mathcal{O}}, \R^{3})) \rightarrow C([0, T]; C^{3}(\bar{\mathcal{O}}))
\end{equation*}
such that
\begin{enumerate}
\item[(i)] $\rho =S[u]$ is the unique classical solution of \eqref{rho-FG}, \eqref{I-C-rho-a}--\eqref{I-C-rho-b}, and \eqref{B-C-rho}{\rm ,}
\item[(ii)] For all $t\geq 0$,
 $$
 \underline{\varrho}\exp\Big(-\int_{0}^{t}\|\mathrm{div}\, u(s)\|_{L^{\infty}(\mathcal{O})}\md s\Big)
   \leq S[u](t, x)\leq \bar{\varrho}\exp\Big(\int_{0}^{t}\|\mathrm{div}\, u(s)\|_{L^{\infty}(\mathcal{O})}\md s\Big),
 $$
\item[(iii)] For any $u_{1}$ and $u_{2}$ in the set{\rm :}
\begin{equation}\label{Nset}
\mathcal{N}_{N}=\{v \,:\, v\in C([0, T]; C^{2}_{0}(\mathcal{O}, \R^{3})),\, \|v\|_{C([0, T]; C^{2}_{0}(\bar{\mathcal{O}}, \R^3 ))}\leq N\}
\end{equation}
with some suitable constant $N>0$,
\be\label{continous-S}
\|S[u_{1}]-S[u_{2}]\|_{C([0, T]; H^{1}(\mathcal{O}))}\leq TC(N, T)\|u_{1}-u_{2}\|_{C([0, T]; H^{1}_{0}(\mathcal{O}))}.
\en
\end{enumerate}
\end{Lemma}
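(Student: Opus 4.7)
The plan is as follows. For fixed $u \in C([0,T]; C^2(\bar{\mathcal{O}}; \R^3))$ with $u|_{\partial\mathcal{O}} = 0$, I rewrite \eqref{rho-FG} in non-conservative form
\begin{equation*}
\partial_t \rho - \varepsilon \Delta \rho + u \cdot \nabla \rho + (\mathrm{div}\, u)\,\rho = 0,
\end{equation*}
which is a uniformly parabolic linear equation with H\"older-continuous coefficients inherited from the regularity of $u$, supplemented with the Neumann condition $\nabla \rho \cdot \vec{n}|_{\partial\mathcal{O}} = 0$ and the smooth initial datum $\rho_0 \in C^3(\bar{\mathcal{O}})$ (imposing implicitly the compatibility $\nabla \rho_0 \cdot \vec{n}|_{\partial\mathcal{O}} = 0$). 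The classical linear parabolic theory (Ladyzhenskaya--Solonnikov--Ural'tseva, combined with Schauder estimates up to the boundary using $\partial \mathcal{O} \in C^{2+\tau}$) then delivers a unique classical solution $\rho = S[u] \in C([0, T]; C^3(\bar{\mathcal{O}}))$, establishing~(i).

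For the pointwise bounds in~(ii), I compare $\rho$ against the spatially-constant barriers
\begin{equation*}
\bar{\rho}(t) := \bar{\varrho}\,\exp\!\Big(\!\int_0^t \|\mathrm{div}\, u(s)\|_{L^\infty}\, ds\Big), \qquad \underline{\rho}(t) := \underline{\varrho}\,\exp\!\Big(\!\!-\!\!\int_0^t \|\mathrm{div}\, u(s)\|_{L^\infty}\, ds\Big),
\end{equation*}
which solve the worst-case ODEs. Setting $w^+ := \rho - \bar{\rho}$, a direct computation gives
\begin{equation*}
\partial_t w^+ - \varepsilon \Delta w^+ + u \cdot \nabla w^+ + (\mathrm{div}\, u)\, w^+ = -\big(\|\mathrm{div}\, u(t)\|_{L^\infty} + \mathrm{div}\, u\big)\,\bar{\rho} \leq 0,
\end{equation*}
with $w^+|_{t=0} \leq 0$ and $\nabla w^+ \cdot \vec{n}|_{\partial\mathcal{O}} = 0$. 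Multiplying by the positive part $(w^+)_+$ and integrating by parts (the convective term contributing $\tfrac{1}{2}\int |(w^+)_+|^2 \mathrm{div}\, u$ thanks to $u|_{\partial\mathcal{O}} = 0$) yields an inequality to which Gronwall applies, forcing $(w^+)_+ \equiv 0$. The lower bound is handled symmetrically.

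For the Lipschitz estimate~(iii), I set $w := \rho_1 - \rho_2 = S[u_1] - S[u_2]$ and subtract the two density equations to obtain
\begin{equation*}
\partial_t w - \varepsilon \Delta w + u_1 \cdot \nabla w + (\mathrm{div}\, u_1)\, w = -\nabla \rho_2 \cdot (u_1 - u_2) - \rho_2\, \mathrm{div}(u_1 - u_2),
\end{equation*}
with $w|_{t=0} = 0$ and $\nabla w \cdot \vec{n}|_{\partial\mathcal{O}} = 0$. On the set $\mathcal{N}_N$, part (ii) together with Schauder estimates applied to the equation for $\rho_2$ gives uniform bounds $\|\rho_2\|_{C([0,T]; C^1(\bar{\mathcal{O}}))} \leq C(N, T)$, so the right-hand side is controlled in $L^2(\mathcal{O})$ by $C(N, T)\|u_1 - u_2\|_{H^1_0}$. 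A standard $L^2$-energy estimate (testing against $w$) combined with a gradient estimate (testing against $-\Delta w$, using the Neumann condition to discharge the boundary terms) closes an inequality of the form $\tfrac{d}{dt}\|w\|_{H^1}^2 \leq C(N,T)\big(\|w\|_{H^1}^2 + \|u_1-u_2\|_{H^1_0}^2\big)$; Gronwall together with $w(0) = 0$ then yields~\eqref{continous-S}.

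The main obstacle is the $H^1$-closing of the difference estimate: when testing against $-\Delta w$, the boundary terms arising in the integration by parts of the convective and zeroth-order contributions must be handled via the Neumann condition $\nabla w \cdot \vec{n} = 0$ together with $u_1|_{\partial \mathcal{O}} = 0$, and the mixed terms $\int u_1 \cdot \nabla w\, \Delta w$ require a careful Young-type absorption against $\varepsilon \|\Delta w\|_{L^2}^2$. A cleaner alternative, if the direct energy estimate becomes technical, is to bootstrap: derive the $L^2$-bound on $w$ first, then apply standard parabolic $L^2(0,T; H^2)$ regularity to the difference equation (whose source lies in $L^2(0,T; L^2)$) and interpolate to the $C([0,T]; H^1)$-norm, bypassing the hands-on $H^1$-energy estimate entirely.
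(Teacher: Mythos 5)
Your argument is correct; the paper itself does not prove this lemma but simply cites \cite{F-N-P-2001}, and your proof is essentially the standard one given there (non-conservative parabolic form plus linear Schauder theory for (i), comparison with the exponential barriers and a Gronwall argument on the positive part for (ii), and an $L^2$/$H^1$ energy estimate on the difference equation, using the uniform $C^1$ bound on $\rho_2$ over $\mathcal{N}_N$, for (iii)). The only cosmetic point is that the Gronwall step in (iii) naturally yields a prefactor of order $\sqrt{T}$ rather than $T$, which is harmless since $C(N,T)$ is unspecified and all that matters for the subsequent fixed-point argument is that the prefactor vanishes as $T\to 0$.
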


\begin{Lemma}\label{solution-Q}
For each $u\in C([0, T]; C_{0}^{2}(\bar{\mathcal{O}}, \R^3 ))$ with $u|_{\del \mathcal{O}}=0$,
there exists a unique solution of the following initial-boundary value problem{\rm :}
\be\label{Q-system-FG}
\begin{cases}
\del_{t}c+(u\cdot \na) c=D_{0}\D c,\\[1mm]
\partial_{t}Q+(u\cdot \nabla )Q+Q\Omega-\Omega Q
=\Gamma H[Q, c],\\[1mm]
Q|_{t=0}=Q_{0}(x)\in H^{1}(\mathcal{O}), \qquad \na Q \cdot \vec{n}|_{\del \mathcal{O}}=0,\\[1mm]
c|_{t=0}=c_{0}(x)\in H^{1}(\mathcal{O}), \qquad\,\,\,\,\,  \na c\cdot  \vec{n}|_{\del \mathcal{O}}=0
\end{cases}
\en
with $0<\underline{c} \leq c_{0}\leq \bar{c}<\infty$
such that
$Q\in L^{\infty}(0, T; H^{1}(\mathcal{O}))\cap L^{2}(0, T; H^{2}(\mathcal{O}))$, 
$c\in L^{\infty}(0, T; H^{1}(\mathcal{O}))\cap L^{2}(0, T; H^{2}(\mathcal{O}))$,
and $0<\underline{c}\leq c \leq \bar{c}<\infty$. Moreover, the above mapping $u\mapsto (c[u], Q[u])$ is continuous
from $\mathcal{N}_{N}$ to $\big(L^{\infty}(0, T; H^{1}(\mathcal{O}))\cap L^{2}(0, T; H^{2}(\mathcal{O}))\big)
 \times \big(L^{\infty}(0, T; H^{1}(\mathcal{O}))\cap L^{2}(0, T; H^{2}(\mathcal{O}))\big)$,
 and $Q[u]\in S^{3}_{0}$ {\it a.e.} in $\mathcal{O}_{T}$.
\end{Lemma}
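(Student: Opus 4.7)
The plan is to decouple the system first and solve the two equations sequentially, since the $c$-equation does not depend on $Q$. Step one is to treat $\partial_{t}c+(u\cdot \na) c=D_{0}\D c$ with Neumann data as a linear advection-diffusion equation whose coefficient $u$ lies in $C([0,T];C_{0}^{2}(\bar{\mathcal{O}}))$. Standard linear parabolic theory (e.g.~the Faedo-Galerkin procedure with the Neumann eigenbasis of $-\D$, or Ladyzhenskaya-Solonnikov-Ural'ceva) gives a unique weak solution $c\in L^{\infty}(0,T;H^{1}(\mathcal{O}))\cap L^{2}(0,T;H^{2}(\mathcal{O}))$. Because the drift coefficient $u$ is divergence-free-free but smooth and tangential at $\partial\mathcal{O}$, the classical maximum principle applies to $c$, so $\underline{c}\le c(t,x)\le \bar c$ pointwise a.e., matching the hypothesis $\underline c\le c_{0}\le \bar c$.

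With $c$ fixed and uniformly bounded, step two is to solve the semilinear parabolic system for $Q$ by Faedo-Galerkin approximation. I would choose a basis $\{\psi_{k}\}\subset H^{2}(\mathcal{O};\mathbb{M}^{3\times 3})$ consisting of eigenfunctions of $-\D$ with Neumann data, project the $Q$-equation onto $X_{n}=\mathrm{span}\{\psi_{1},\dots,\psi_{n}\}$, and obtain local-in-time $Q_{n}$ from standard ODE theory (the nonlinearity $b(Q^{2}-\tfrac{1}{3}\tr(Q^{2})I_{3})-c_{*}Q\tr(Q^{2})$ is locally Lipschitz). Testing the Galerkin equation with $Q_{n}$ and with $-\D Q_{n}$ and exploiting the $L^{\infty}$-bound on $c$, the dissipative cubic term $-c_{*}Q\tr(Q^{2})$ supplies the control $c_{*}\Gamma\|Q_{n}\|_{L^{4}}^{4}+c_{*}^{2}\Gamma\|Q_{n}\|_{L^{6}}^{6}$ on the right scale to close an energy inequality of Proposition~\ref{energy-inequality} type. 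The transport-type terms $(u\cdot\na)Q_{n}$ and $Q_{n}\Omega-\Omega Q_{n}$ are handled by the $C^{2}$-bound on $u$. This yields uniform bounds in $L^{\infty}(0,T;H^{1})\cap L^{2}(0,T;H^{2})$, and Aubin-Lions compactness lets me pass to the limit $n\to\infty$.

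Step three is to upgrade the limit $Q$ to be symmetric and traceless. Taking the trace of the $Q$-equation, and using $\tr(Q\Omega-\Omega Q)=0$, $\tr(Q^{2}-\tfrac{1}{3}\tr(Q^{2})I_{3})=0$, gives a scalar parabolic equation
\[
\partial_{t}\tr Q+(u\cdot\na)\tr Q-\Gamma\D\tr Q+\tfrac{\Gamma(c-c_{*})}{2}\tr Q+\Gamma c_{*}\tr(Q^{2})\tr Q=0,
\]
and testing against $\tr Q$, the cubic term contributes the nonnegative quantity $\Gamma c_{*}\int\tr(Q^{2})|\tr Q|^{2}$, which we discard; the linear term is controlled by $\|c\|_{L^{\infty}}$, so Gronwall with $\tr Q_{0}=0$ forces $\tr Q\equiv 0$. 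Symmetry follows by applying the same energy argument to $A:=Q-Q^{T}$: transposing the equation, the antisymmetric rotation piece becomes $A\Omega-\Omega A$, the quadratic piece decomposes as $Q^{2}-(Q^{T})^{2}=AQ+Q^{T}A$, and the resulting linear parabolic inequality for $\|A\|_{L^{2}}^{2}$ again starts from zero. Uniqueness of $(c[u],Q[u])$ and continuity of the map $u\mapsto(c[u],Q[u])$ on $\mathcal{N}_{N}$ follow from analogous energy estimates on the differences of two solutions, where the cubic differences are controlled using the embedding $L^{2}(0,T;H^{2}(\mathcal{O}))\hookrightarrow L^{2}(0,T;L^{\infty}(\mathcal{O}))$ in three dimensions together with the uniform $L^{\infty}(H^{1})$-bound. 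The main obstacle is precisely handling this cubic coupling $c_{*}Q\tr(Q^{2})$ in the difference estimate: brute absorption fails and the argument must use the quadratic-in-$Q$ structure so that Gronwall closes in an integrated form via the $L^{2}_{t}L^{\infty}_{x}$-norm of $Q_{1}$ and $Q_{2}$.
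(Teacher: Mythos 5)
Your proposal follows essentially the same route as the paper's proof of Lemma \ref{solution-Q}: solve the decoupled $c$-equation by linear parabolic theory and get $\underline{c}\le c\le\bar{c}$ from the maximum principle; derive the $L^{\infty}_{t}H^{1}_{x}\cap L^{2}_{t}H^{2}_{x}$ bounds for $(c,Q)$ from an energy inequality of the Proposition \ref{energy-inequality} type; prove uniqueness and the continuity of $u\mapsto(c[u],Q[u])$ by energy estimates on differences; and obtain $\tr Q=0$ by testing the traced equation against $\tr Q$ and applying Gronwall. The only structural divergence is symmetry: the paper observes that $Q^{\top}$ solves the same initial-boundary value problem and invokes the uniqueness already established, whereas you run a separate energy estimate on $Q-Q^{\top}$ using $Q^{2}-(Q^{\top})^{2}=(Q-Q^{\top})Q+Q^{\top}(Q-Q^{\top})$; both are valid, the paper's being shorter. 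One step needs correcting: testing the $Q$-equation with $Q_{n}$ and $-\D Q_{n}$ alone does \emph{not} produce the dissipation $c_{*}^{2}\Gamma\|Q_{n}\|_{L^{6}}^{6}$ you invoke. Pairing $-c_{*}\Gamma Q_{n}\tr(Q_{n}^{2})$ with $Q_{n}$ gives only $c_{*}\Gamma\|Q_{n}\|_{L^{4}}^{4}$, and pairing it with $-\D Q_{n}$ gives the sign-favorable quantity $-2c_{*}\Gamma\int|\na Q_{n}|^{2}|Q_{n}|^{2}-c_{*}\Gamma\int|\na \tr(Q_{n}^{2})|^{2}$, not an $L^{6}$ norm. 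The $\|Q_{n}\|_{L^{6}}^{6}$ term comes from the third piece of the multiplier, i.e., one must test with $-\bigl(\D Q_{n}-Q_{n}-c_{*}Q_{n}\tr(Q_{n}^{2})\bigr)$ exactly as in Proposition \ref{energy-inequality}; this also places $\frac{c_{*}}{4}\|Q_{n}\|_{L^{4}}^{4}$ inside the Gronwall functional. Without it, the term $C\|Q_{n}\|_{L^{4}}^{4}$ generated by $-b\Gamma(Q_{n}^{2},\D Q_{n})$ can neither be absorbed (its constant scales with $b^{2}$, not with $c_{*}$) nor interpolated away via $\|Q_{n}\|_{L^{4}}^{4}\le \epsilon\|Q_{n}\|_{L^{6}}^{6}+C_{\epsilon}\|Q_{n}\|_{L^{2}}^{2}$, and the estimate does not close for general $b$.
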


\begin{proof} We divide the proof into five steps.

\smallskip
1. The existence of a solution $(c, Q)$ can be achieved by the standard parabolic theory \cite{L-S-U-1968}.
The boundedness of $c$ is guaranteed by the fact that the maximum principle is valid for the first equation
in system \eqref{Q-system-FG} and the initial condition of $c$.
Next, we show $(c, Q)$ belongs to
$\big(L^{\infty}(0, T; H^{1}(\mathcal{O}))\cap L^{2}(0, T; H^{2}(\mathcal{O}))\big)
 \times \big(L^{\infty}(0, T; H^{1}(\mathcal{O}))\cap L^{2}(0, T; H^{2}(\mathcal{O}))\big)$.

\smallskip
2. Assume that $u\in \mathcal{N}_{N}$.
First, let us take the sum of the first equation in \eqref{Q-system-FG} multiplied by $c-\D c$
and the second equation in \eqref{Q-system-FG} multiplied by
$-\big(\D Q-Q-c_{*}Q\tr (Q^{2})\big)$, take the trace, and integrate by parts over $\mathcal{O}$ to obtain
\begin{align*}
&\frac{1}{2}\frac{\mathrm{d}}{\md t}\int_{\mathcal{O}}\big(|(c, Q, \na c, \na Q)|^{2}
  +\frac{c_{*}}{2}|Q|^{4}\big) \md x\\
&\quad\,  +D_{0}\|(\na c, \D c)\|_{L^{2}}^{2}
   +\Gamma \|(\na Q, \D Q)\|_{L^{2}}^{2} +c_{*}\Gamma \|Q\|_{L^{4}}^{4}+c_{*}^{2}\Gamma \|Q\|_{L^{6}}^{6}\\[1mm]
&=(u\cdot \na c, \D c)-(u\cdot \na c, c)+(u\cdot \nabla Q, \D Q)-(u\cdot \na Q, Q+c_{*}Q|Q|^{2})-(\Omega Q-Q\Omega, \D Q) \\
&\quad +\left(\O Q-Q\O, Q +c_{*}Q|Q|^{2}\right)
 +\frac{1}{2}\Gamma((c-c_{*}) Q, \D Q-Q-c_{*} Q\,\tr (Q^{2}))\\
 &\quad -b\Gamma (Q^{2}, \D Q-Q-c_{*} Q\,\tr (Q^{2})) +2c_{*}\Gamma(Q|Q|^{2}, \D Q)\\
&\leq \frac{D_{0}}{2}\|\D c\|_{L^{2}}^{2}+\frac{\Gamma}{2}\|\D Q\|_{L^2 }^{2} +\frac{c_{*}^{2}\Gamma}{2}\|Q\|_{L^{6} }^{6}
+C\big(\|(c, Q, \na c, \na Q)\|_{L^{2}}^{2}
+\|Q\|_{L^4 }^{4}\big).
\end{align*}

By the Gronwall inequality, we obtain
\begin{align*}
 \|(c,Q)\|_{L^{\infty}(0, T; H^{1}(\mathcal{O}))}+\|(c,Q)\|_{L^2 (0, T; H^2 (\mathcal{O}))}\leq C,
\end{align*}
where $C>0$ depends on $N, b, c_{*}, \Gamma, T, \|Q_{0}\|_{H^1 (\mathcal{O})}$,
and $\|c_{0}\|_{H^{1}(\mathcal{O})}$.

\smallskip
3. For the uniqueness, we denote $(\tilde{c}, \tilde{Q})=(c_{1}-c_{2}, Q_{1}-Q_{2})$
for any two solutions $(c_{1}, Q_{1})$ and $(c_{2}, Q_{2})$ of \eqref{Q-system-FG}.
Then $(\tilde{c}, \tilde{Q})$ satisfies
\be\label{tildeQ}
\begin{cases}
\del_{t}\tilde{c}+u\cdot \na \tilde{c} =D_{0}\D \tilde{c},\\[1mm]
\del_{t}\tilde{Q}+u\cdot \na \tilde{Q} -\O \tilde{Q}+\tilde{Q}\O 
-\Gamma \D \tilde{Q}
+\frac{1}{2}\Gamma\tilde{c} Q_{1}+\frac{1}{2}\Gamma \tilde{Q}(c_{2}-c_{*})\\[1mm]
\quad =\Gamma \Big(b\big(\tilde{Q}(Q_{1}+Q_{2})-\frac{1}{3}\tr (\tilde{Q}(Q_{1}+Q_{2}))\mathrm{I}_{3}\big)
  -c_{*}\tilde{Q}\,\tr \, Q_{1}^2 -c_{*}Q_{2}\,\tr (\tilde{Q}(Q_{1}+Q_{2}))\Big)
\end{cases}
\en
with $(\tilde{c}, \tilde{Q})|_{t=0}=(0, 0)$, $\na \tilde{Q}\cdot \vec{n}|_{\del \mathcal{O}}=0$, and $\na \tilde{c}\cdot \vec{n}|_{\del \mathcal{O}}=0$.
We sum up the first equation in (\ref{tildeQ}) multiplied by $\tilde{c}$ and the second equation in \eqref{tildeQ} multiplied by $\tilde{Q}$,
take the trace, and integrate by parts over $\mathcal{O}$ to obtain
\begin{align*}
&\frac{1}{2}\frac{\md}{\md t}\|(\tilde{c},\tilde{Q})\|_{L^{2}}^2
 +D_{0}\|\na \tilde{c}\|_{L^{2}}^{2}+\Gamma \|\na \tq\|_{L^{2}}^2 \\
&=-(u \cdot \na \tilde{c}, \tilde{c})-(u\cdot \na \tq, \tq)+(\O \tq-\tq \O, \tq)
-\frac{\Gamma}{2} (\tilde{c}Q_{1}, \tq)-\frac{\Gamma}{2}((c_{2}-c_{*})\tq, \tq)\\
&\quad +\Gamma(b(\tq (Q_{1}+Q_{2}) -\frac{1}{3}\tr(\tq (Q_{1}+Q_{2}))\mathrm{I}_{3})
  -c_{*}\tilde{Q}\,\tr \, Q_{1}^2 -c_{*}Q_{2}\,\tr (\tilde{Q}(Q_{1}+Q_{2})) ,\, \tq)\\
&\leq \frac{D_{0}}{2}\|\na \tilde{c}\|_{L^{2}}^{2}
  +C\big(\|(\tilde{c}, \tq)\|_{L^{2}}^{2}+ \| \tq\|_{L^{2}} \|\tq\|_{L^6 } (\|Q_{1}+Q_{2}\|_{L^3}+\|(Q_{1}, Q_{2})\|^{2}_{L^6 })+\|\na \tq\|_{L^{2}}\|\tq\|_{L^{2}})\\
&\leq\frac{D_{0}}{2}\|\na \tilde{c}\|_{L^{2}}^{2}+\frac{\Gamma}{2}\|\na \tq\|_{L^{2}}^2 +C\|(\tilde{c},\tq)\|_{L^{2}}^2,
\end{align*}
where $C$ depends on $b, c_{*}, \Gamma, N$, and $\|Q_{0}\|_{H^{1}}$, and we have also used Sobolev's embedding inequality,
Poincar\'{e} inequality, and Young's inequality in the last step.
Therefore, applying the Gronwall inequality, we obtain the desired uniqueness result.

\smallskip
4. Now we show that the map: $u\mapsto (c[u], Q[u])$ is continuous.
Let $\{u_{n}\}$ be a bounded sequence in $\mathcal{N}_{N}$ with
\begin{equation*}
\lim_{n\rightarrow \infty}\|u_{n}-u\|_{C(0, T; C^{2}_{0}(\bar{\mathcal{O}}))}=0
\end{equation*}
for some $u\in C(0, T; C^{2}_{0}(\bar{\mathcal{O}}))$.
Denote $(c_{n}, Q_{n})=(c[u_{n}], Q[u_{n}])$, $(c, Q)=(c[u], Q[u])$, and $(\tilde{c}_{n}, \tq_{n}) =(c_{n}-c, Q_{n}-Q)$.
Taking the difference of the equations satisfied by $(c_{n}, Q_{n})$ and $(c, Q)$,
multiplying the resulting equations by $(-\D \tilde{c}_{n}, -\D \tq_{n})$,
taking the trace, and integrating by parts over $\mathcal{O}$, we have
\begin{align*}
&\frac{1}{2}\frac{\md}{\md t} \| (\na \tilde{c}_{n},\na \tq_{n})\|_{L^{2}}^{2}
+D_{0}\|\D \tilde{c}_{n}\|_{L^{2}}^{2}+\Gamma \|\D \tq_{n}\|_{L^{2}}^2 \\
&=(u_{n}\cdot \na \tilde{c}_{n}+(u_{n}-u)\cdot \na c,  \D \tilde{c}_{n})+(u_{n}\cdot \na \tq_{n}+(u_{n}-u)\cdot \na Q, \D \tq_{n})\\
&\quad -(\O_{n}\tq_{n}+(\O_{n}-\O)Q, \D \tq_{n}) +(\tq_{n}\O_{n}+Q(\O_{n}-\O), \D \tq_{n})\\
&\quad +\frac{\Gamma}{2} (\tilde{c}_{n}Q_{n}+(c-c_{*})\tq_{n}, \D \tq_{n})
-\frac{1}{3}b\Gamma (3\tq_{n}(Q_{n}+Q)+\tr (\tq_{n}(Q_{n}+Q))\mathrm{I}_{3}, \D \tq_{n})\\
&\quad +c_{*}\Gamma (|Q_{n}|^{2}\tq_{n}+Q(|Q_{n}|^{2}-|Q|^{2}), \D \tq_{n})\\
&=\sum_{i=1}^{7}\i_{i}.
\end{align*}
In the following, we estimate all the terms on the right-hand side of the above equation:
\begin{align*}
|\i_{1}|&=|(u_{n}\cdot \na \tilde{c}_{n}+(u_{n}-u)\cdot \na c,  \D \tilde{c}_{n})|\\
&\leq C\big(\|u_{n}\|_{L^{\infty}}\|\na \tilde{c}_{n}\|_{L^{2}}+\|u_{n}-u\|_{L^{\infty}}\|\na c\|_{L^{2}}\big)\|\D  \tilde{c}_{n}\|_{L^{2}}\\
&\leq \frac{D_{0}}{2}\|\D \tilde{c}_{n}\|_{L^{2}}^{2}+C\|\na  \tilde{c}_{n}\|_{L^{2}}^{2}+C\| u_{n}-u\|_{L^{\infty}}^{2},
\end{align*}
\begin{align*}
|\i_{2}|&=|(u_{n}\cdot \na \tq_{n}+(u_{n}-u)\cdot \na Q, \D \tq_{n})|\\
&\leq C\big(\|u_{n}\|_{L^{\infty}}\|\na \tq_{n}\|_{L^{2}}+\|u_{n}-u\|_{L^{\infty}}\|\na Q\|_{L^{2}}\big)\|\D \tq_{n}\|_{L^{2}}\\
&\leq\frac{\Gamma}{12}\|\D \tq_{n}\|_{L^{2}}^{2}+C\|\na \tq_{n}\|_{L^{2}}^2 +C\|u_{n}-u\|_{L^{\infty}}^2,
\end{align*}
\begin{align*}
|\i_{3}|&=|(\O_{n}\tq_{n}+(\O_{n}-\O)Q, \D \tq_{n})|\\
&\leq C\|\O_{n}\|_{L^{\infty}}\|\tq_{n}\|_{L^{2}}\|\D \tq_{n}\|_{L^{2}}+\|\na u_{n}-\na u\|_{L^{\infty}}\|Q\|_{L^{2}}\|\D \tq_{n}\|_{L^{2}}\\
&\leq \frac{\Gamma}{12}\|\D \tq_{n}\|_{L^{2}}^2 +C\|\tq_{n}\|_{L^{2}}^2 +C\|\na u_{n}-\na u\|_{L^{\infty}}^2\\
&\leq \frac{\Gamma}{12}\|\D \tq_{n}\|_{L^{2}}^2 +C\|\na \tq_{n}\|_{L^{2}}^2 +C\|\na u_{n}-\na u\|_{L^{\infty}}^2,
\end{align*}
\begin{align*}
|\i_{4}|&=|(\tq_{n}\O_{n}+Q(\O_{n}-\O), \D \tq_{n})|\\
&\leq C\|\tq_{n}\|_{L^{2}}\|\O_{n}\|_{L^{\infty}}\|\D \tq_{n}\|_{L^{2}}+\|Q\|_{L^{2}}\|\O_{n}-\O\|_{L^{\infty}}\|\D \tq_{n}\|_{L^{2}}\\
&\leq \frac{\Gamma}{12}\|\D \tq_{n}\|_{L^{2}}^2 +C\|\tq_{n}\|_{L^{2}}^2 +C\|\na u_{n}-\na u\|_{L^{\infty}}^2\\
&\leq \frac{\Gamma}{12}\|\D \tq_{n}\|_{L^{2}}^2 +C\|\na \tq_{n}\|_{L^{2}}^2 +C\|\na u_{n}-\na u\|_{L^{\infty}}^2,
\end{align*}
\begin{align*}
|\i_{5}|&=\frac{\Gamma}{2}|(\tilde{c}_{n}Q_{n}+(c-c_{*})\tq_{n}, \D \tq_{n})|
\leq C\big(\|\tilde{c}_{n}\|_{L^{4}}\|Q_{n}\|_{L^{4}}+\|c-c_{*}\|_{L^{\infty}}\|\tq_{n}\|_{L^{2}}\big)\|\D \tq_{n}\|_{L^{2}}\\
&\leq \frac{\Gamma}{12}\|\D \tq_{n}\|_{L^{2}}^{2}+C\|\na \tilde{c}_{n}\|_{L^{2}}^{2}+C\|\tq_{n}\|_{L^{2}}^{2},
\end{align*}
\begin{align*}
|\i_{6}|&=\frac{1}{3}b\Gamma |(3\tq_{n}(Q_{n}+Q)+ \tr (\tq_{n}(Q_{n}+Q))\mathrm{I}_{3}, \D \tq_{n})|
 \leq C\|\tq_{n}\|_{L^{3}}\|Q_{n}+Q\|_{L^6}\|\D \tq_{n}\|_{L^{2}}\\
&\leq\frac{\Gamma}{12}\|\D \tq_{n}\|_{L^{2}}^{2}+C\|\na \tq_{n}\|_{L^{2}}^2,
\end{align*}
and
\begin{align*}
|\i_{7}|&=c_{*}\Gamma |(|Q_{n}|^{2}\tq_{n}+Q(|Q_{n}|^{2}-|Q|^{2}), \D \tq_{n})|\\
&\leq C\Gamma \big(\|Q_{n}\|_{L^6}^2 +\|Q_{n}\|_{L^6}\|Q\|_{L^{6}}+\|Q\|_{L^6}^{2}\big)\|\tq_{n}\|_{L^6}\|\D \tq_{n}\|_{L^{2}}\\
&\leq \frac{\Gamma}{12}\|\D \tq_{n}\|_{L^{2}}^2 +C\|\na \tq_{n}\|_{L^{2}}^2.
\end{align*}

Combining all the above estimates, we conclude
\begin{align*}
&\frac{1}{2}\frac{\md}{\md t}\|(\na \tilde{c}_{n}, \na \tq_{n})\|_{L^{2}}^{2}
+\frac{D_{0}}{2}\|\D \tilde{c}_{n}\|_{L^{2}}^{2} +\frac{\Gamma}{2}\|\D \tq_{n}\|_{L^{2}}^{2}\\[1mm]
&\leq C\|u_{n}-u\|_{C^{2}_{0}(\mathcal{O})}^{2}+C\| (\na \tilde{c}_{n}, \na \tq_{n})\|_{L^{2}}^{2}.
\end{align*}
Then, by the Gronwall inequality, we have
\be\label{continous-Q}
\begin{split}
&\|(\na \tilde{c}_{n}(t, \cdot), \na \tq_{n}(t, \cdot))\|_{L^{2}}^2
 +\Gamma \int_{0}^{t}\|(\D \tilde{c}_{n}(s, \cdot), \D \tq_{n}(s, \cdot))\|_{L^{2}}^{2}\,\md s\\
&\leq CTe^{CT}\|u_{n}-u\|_{L^{\infty}(0, T; C^{2}_{0}(\mathcal{O}))}^{2}.
\end{split}
\en
As $n\rightarrow \infty$, we conclude
\begin{align*}
\lim_{n\rightarrow \infty}\big(\|(\tilde{c}_{n},\tq_{n})\|_{L^{\infty}(0, T; H^{1})}
  +\|(\tilde{c}_{n}, \tq_{n})\|_{L^{2}(0, T; H^{2})}\big)=0.
\end{align*}

\smallskip
5. Finally, we show that $Q\in S^{3}_{0}$,
 {\it i.e.}, $Q^\top=Q$ and $\tr\, Q=0$ {\it a.e.} in $\mathcal{O}_{T}$.
It is clear that, if $Q$ is a solution of problem (\ref{Q-system-FG}), so is $Q^\top$.
Then, by the uniqueness of the solution 
we know $Q^\top=Q$.

Then the only thing left is to show that $\tr \, Q=0$.
We take the trace on both sides of the second equation in (\ref{Q-system-FG}) to obtain
\be\label{traceQ}
\del_{t}(\tr \, Q)+u\cdot \na \tr \, Q 
=\Gamma \big(\D \tr \, Q-\frac{1}{2}(c-c_{*})\tr \, Q-c_{*}\tr \, Q\,\tr \, Q^2\big),
\en
with $\tr \, Q |_{t=0}=\tr \, Q_{0}=0$ and $\na \tr \,Q \cdot \vec{n}|_{\del \mathcal{O}}=0$,
where we have used the fact that $Q^\top=Q$, $\O^\top=-\O$, and $\tr (Q\Omega)=\tr (\Omega Q)$.
Then we multiply equation (\ref{traceQ}) by $\tr \, Q$
and integrate by parts over $\mathcal{O}$ to obtain
\begin{align*}
&\frac{1}{2}\frac{\md}{\md t}\|\tr \, Q\|_{L^{2}}^2 +\Gamma \|\na \tr \, Q\|_{L^{2}}^{2}\\
&=-\frac{1}{2}\Gamma ((c-c_{*})\tr \, Q, \tr \, Q) -c_{*}\Gamma \int_{\mathcal{O}}|\tr \, Q|^2\,\tr \, Q^2 \mathrm{d}x
-\int_{\mathcal{O}} u\cdot \na (\tr \, Q) \,\tr \, Q\,\md x\\
&\leq C \|\tr \, Q\|_{L^{2}}^2 +C\|Q\|_{L^{6}}^{2}\|\tr \, Q\|_{L^{6}}\|\tr \, Q\|_{L^{2}}
+C\|\na \tr \, Q\|_{L^{2}}\|\tr \, Q\|_{L^{2}}\\
&\leq C \|\tr \, Q\|_{L^{2}}^2 +C\|\na \tr \, Q\|_{L^{2}}\|\tr \, Q\|_{L^{2}}
+C\|\na \tr \, Q\|_{L^{2}}\|\tr \, Q\|_{L^{2}}\\
&\leq \frac{\Gamma}{2}\|\na \tr \, Q\|_{L^{2}}^2 +C\|\tr \, Q\|_{L^{2}}^{2}.
\end{align*}
Applying the Gronwall inequality again, we complete the proof.
\end{proof}

\subsection{The Faedo-Galerkin approximation scheme}

In this section, we proceed to solve (\ref{u-FG}) by the Faedo-Galerkin approximation scheme. Let $\{\psi_{n} \}$ be a family of smooth eigenfunctions of the Laplacian operator:
\begin{align*}
&-\D \psi_{n}=\la_{n}\psi_{n} \qquad \mbox{ on } \mathcal{O}, \\
&\psi_{n}|_{\del \mathcal{O}}=0,
\end{align*}
where $0<\la_{1}\leq \la_{2}\leq \cdots$ are eigenvalues. We know that the eigenfunctions $\{\psi_{n}\}_{n=1}^{\infty}$ form an orthogonal basis of $H_{0}^{1}(\mathcal{O})$.

Now, consider a sequence of finite-dimensional spaces:
\begin{equation}\label{X-n}
X_{n}=\mbox{span} \{\psi_{1}, \psi_{2}, \cdots, \psi_{n}\}, \qquad n=1, 2, \cdots,
\end{equation}
and look for solutions $u_{n}\in C(0, T; X_{n})$ to the following variational approximation problem:
\be\label{variation}
\begin{split}
&\int_{\mathcal{O}}\rho(t, x) u_{n}(t, x)\cdot \psi(x) \,\md x-\int_{\mathcal{O}}m_{0}(x)\cdot \psi(x)\, \md x\\
&=\int_{0}^{t}\int_{\mathcal{O}}\big(\mu \D u_{n}+(\mu+\nu)\na \mathrm{div}\, u_{n}-\mathrm{div}\,(\rho u_{n}\otimes u_{n})
  -\na (\rho^{\gamma}+\delta \rho^{\b})-\ve \na \rho \cdot \na u_{n}\big)\cdot \psi\, \md x\md s \\
&\quad +\int_{0}^{t}\int_{\mathcal{O}}\na \cdot \big(\mathrm{F}(Q)\mathrm{I}_{3}-\na Q\odot \na Q
+ Q\D Q-\D QQ+\sigma_{*} c^{2}Q\big)\cdot \psi \,\md x\md s
\end{split}
\en
for any $t\in [0, T]$, and $\psi \in X_{n}$.

Next we introduce a family of operators, as in \cite{F-N-P-2001}:
\begin{align*}
\mathcal{M}[\rho]: X_{n}\mapsto X_{n}^{*},
\qquad (\mathcal{M}[\rho]v, w)=\int_{\mathcal{O}}\rho v\cdot w\, \md x \quad\mbox{for any $v, w\in X_{n}$},
\end{align*}

The map:
\begin{equation*}
\rho \mapsto \mathcal{M}^{-1}[\rho]
\end{equation*}
from $N_{\eta}=\{\rho\in L^{1}(\mathcal{O}) \; :\; \inf_{x\in \O}\rho \geq \eta >0\}$
into $\mathcal{L}(X_{n}^{*}, X_{n})$  has the following property:
\be\label{continous-M-inverse}
\|\mathcal{M}^{-1}[\rho_{1}]-\mathcal{M}^{-1}[\rho_{2}]\|_{\mathcal{L}(X_{n}^{*}, X_{n})}\leq C(n, \eta)\|\rho_{1}-\rho_{2}\|_{L^{1}(\mathcal{O})}.
\en
Using Theorems \ref{solution-rho}--\ref{solution-Q} with $\rho_{n}=S[u_{n}]$, $c_{n}=c[u_{n}]$, and $Q_{n}=Q[u_{n}]$,
we can rewrite the variational problem (\ref{variation}) as
\begin{equation*}
u_{n}(t)=\mathcal{M}^{-1}[\rho_{n}]\left(m^{*}+\int_{0}^{t}\mathcal{N}[c_{n}(s), \rho_{n}(s), u_{n}(s), Q_{n}(s)]\md s\right)
\end{equation*}
with
\begin{align*}
&(m^{*}, \psi)=\int_{\mathcal{O}}m_{0}\cdot \psi \,\md x,\\[2mm]
&(\mathcal{N}[c_{n}, \rho_{n}, u_{n}, Q_{n}], \psi)\\
&=\int_{\mathcal{O}}\big(\mu \D u_{n}+(\mu+\nu)\na \mathrm{div}\, u_{n}-\mathrm{div}\,(\rho_{n} u_{n}\otimes u_{n})
  -\na (\rho_{n}^{\gamma}+\delta \rho_{n}^{\b})\big)\cdot \psi\, \md x\\
&\qquad-\ve \int_{\mathcal{O}}(\na \rho_{n} \cdot \na) u_{n}\cdot \psi \,\md x +\int_{\mathcal{O}}\na \cdot \big(-\na Q_{n}\odot \na Q_{n}+\mathrm{F}(Q_{n})\mathrm{I}_{3}\big)\cdot \psi\, \md x\\
&\qquad +\int_{\mathcal{O}}\na \cdot \big(Q_{n}\D Q_{n}-\D Q_{n}Q_{n}
 +\sigma_{*} c_{n}^{2}Q_{n}\big)\cdot \psi\, \md x
\end{align*}
for any $t\in [0, T]$ and $\psi \in X_{n}$.
Therefore, combining  (\ref{continous-S}), (\ref{continous-Q}), and (\ref{continous-M-inverse}),
we achieve a local solution $(c_n, \rho_{n}, u_{n}, Q_{n})$ of problem (3.1)--(\ref{rho-FG}), (\ref{Q-FG}), and (\ref{variation}),
with initial-boundary data (\ref{I-C-rho-a})--(\ref{B-C-uQ}) on a short time interval $[0, T_{n}], T_{n}\leq T$,
by using the standard fixed point theorem on $C(0, T; X_{n})$.
In order to extend the existence time $T_{n}$ to $T$ for any $n=1, 2, \cdots$,
we need to prove that $u_{n}$ stays bounded in $X_{n}$ for the whole interval $[0, T_{n}]$.
Hence, in the following, we establish an energy inequality as in Proposition \ref{energy-inequality}.

Differentiate (\ref{variation}) with respect to $t$ and take $\psi =u_{n}$ as a test function to obtain
\be\label{u-n-energy}
\begin{split}
&\frac{\mathrm{d}}{\md t}\int_{\mathcal{O}}\big(\frac{1}{2}\rho_{n} |u_{n}|^{2}+\frac{\rho_{n}^{\gamma}}{\gamma-1}+\frac{\delta \rho_{n}^{\b}}{\b-1}\big)\,\md x
 +\mu \|\na u_{n}\|_{L^{2}}^2 +(\nu +\mu)\|\mathrm{div}\, u_{n}\|_{L^{2}}^2 \\
&\qquad +\ve \int_{\mathcal{O}}(\gamma \rho_{n}^{\gamma-2}+\delta \b \rho_{n}^{\b-2})|\na \rho_{n}|^{2}\,\md x \\
&=\int_{\mathcal{O}}\na \cdot \big(-\na Q_{n}\odot \na Q_{n}+\mathrm{F}(Q_{n})\mathrm{I}_{3}+Q_{n}\D Q_{n}-\D Q_{n}Q_{n}
+\sigma_{*}c_{n}^{2}Q_{n}\big)\cdot u_{n}\,\md x,
\end{split}
\en
where we have used the following equalities as in Proposition \ref{energy-inequality},
\begin{align*}
&\int_{\mathcal{O}}\del_{t}(\rho_{n}u_{n})\cdot u_{n}\,\md x +\int_{\mathcal{O}}\mathrm{div}\,(\rho_{n}u_{n}\otimes u_{n})\cdot u_{n}\,\md x \\
&\quad =\frac{1}{2}\frac{\mathrm{d}}{\md t}\int_{\mathcal{O}}\rho_{n} |u_{n}|^{2}\,\md x -\ve \int_{\mathcal{O}} (\na \rho_{n}\cdot \na) u_{n}\cdot u_{n} \, \md x,\\[2mm]
&\int_{\mathcal{O}}\na \rho_{n}^{\gamma}\cdot u_{n}\,\mathrm{d}x
=\frac{\mathrm{d}}{\mathrm{d}t}\int_{\mathcal{O}}\frac{\rho_{n}^{\gamma}}{\gamma-1}\,\mathrm{d}x
  +\ve \gamma \int_{\mathcal{O}} \rho_{n}^{\gamma -2}|\na \rho_{n}|^2 \,\mathrm{d}x,\\[1mm]
&\int_{\mathcal{O}}\delta \na \rho_{n}^{\b}\cdot u_{n}\,\mathrm{d}x
  =\frac{\mathrm{d}}{\mathrm{d}t}\int_{\mathcal{O}}\frac{\delta \rho_{n}^{\b}}{\b-1}\,\mathrm{d}x
  +\ve \delta \b \int_{\mathcal{O}} \rho_{n}^{\b -2}|\na \rho_{n}|^2\, \mathrm{d}x.
\end{align*}
Then we take the inner product of \eqref{c-FG} with $c_{n}$, \eqref{Q-FG} with $-\big(\D Q_{n}-Q_{n}-c_{*}Q_{n}\tr (Q_{n}^{2})\big)$,
add the resulting equations to \eqref{u-n-energy} and integrate by parts over $\mathcal{O}$ to obtain
 \begin{align*}
&\frac{\md}{\md t}E_{\delta}^{n}(t) +D_{0}\|\na c_{n}\|_{L^{2}}^{2}+\mu \|\na u_{n}\|_{L^{2}}^{2}+(\mu +\nu)\|\mathrm{div}\, u_{n}\|_{L^{2}}^{2}+\Gamma \|(\na Q_{n}, \D Q_{n})\|_{L^{2}}^{2}\\
&\quad +c_{*}\Gamma \|Q_{n}\|_{L^{4}}^{4}+c_{*}^{2}\Gamma \|Q_{n}\|_{L^{6}}^{6}+\ve \int_{\mathcal{O}}(\gamma \rho_{n}^{\gamma-2}+\delta \b \rho_{n}^{\b-2})|\na \rho_{n}|^{2}\,\md x  \\
&=\int_{\mathcal{O}}\na \cdot \big(-\na Q_{n}\odot \na Q_{n}+\mathrm{F}(Q_{n})\mathrm{I}_{3}+Q_{n}\D Q_{n}-\D Q_{n}Q_{n}
  +\sigma_{*}c_{n}^{2}Q_{n}\big)\cdot u_{n}\, \md x
\\
&\quad+(u_{n}\cdot \nabla Q_{n}, \D Q_{n})-(u_{n}\cdot \na Q_{n}, Q_{n}+c_{*}Q_{n}|Q_{n}|^{2})-(\Omega_{n} Q_{n}-Q_{n}\Omega_{n}, \D Q_{n})\\
&\quad +\left(\O_{n} Q_{n}-Q_{n}\O_{n}, Q_{n} +c_{*}Q_{n}|Q_{n}|^{2}\right)+\frac{1}{2}\Gamma((c_{n}-c_{*})Q_{n}, \D Q_{n}-Q_{n}-c_{*} Q_{n}\tr (Q_{n}^{2}))\\
&\quad -b\Gamma (Q_{n}^{2}, \D Q_{n}-Q_{n}-c_{*} Q_{n}\tr (Q_{n}^{2}))+2c_{*}\Gamma(Q_{n}|Q_{n}|^{2}, \D Q_{n}) -(u_{n}\cdot \na c_{n}, c_{n})\\
&\leq \frac{D_{0}}{2}\|\na c_{n}\|_{L^{2}}^{2} +\frac{\mu}{2}\|\na u_{n}\|_{L^{2}}^{2} +\frac{\Gamma}{2}\|\D Q_{n}\|_{L^{2}}^{2} +\frac{c_{*}\Gamma}{2}\| Q_{n}\|_{L^{6}}^{6}
   + C\big(\|(u_{n}, Q_{n})\|_{L^{2}}^{2}+\|Q_{n}\|_{L^{4}}^{4}\big)
\end{align*}
where $C$ is independent of $n$ and $\ve$, and 
\begin{align*}
E_{\delta}^{n}(t)=\int_{\mathcal{O}}
\Big(\frac{1}{2}|c_{n}|^{2} +\frac{1}{2}\rho_{n} |u_{n}|^{2}+\frac{\rho_{n}^{\gamma}}{\gamma-1}+\frac{\delta \rho_{n}^{\b}}{\b-1}+\mathrm{F}(Q_{n})\Big)\,\md x.
\end{align*} 
This implies
\be\label{energy-estimate-n-pre}
\begin{split}
&\frac{\md}{\md t}E^{n}_{\delta}(t)
+\frac{D_{0}}{2}\|\na c_{n}\|_{L^{2}}^{2}+\frac{\mu}{2}\|\na u_{n}\|_{L^{2}}^{2}+(\mu +\nu)\|\mathrm{div}\, u_{n}\|_{L^{2}}^{2}+\Gamma \|\na Q_{n}\|_{L^{2}}^{2}\\
&\quad +\frac{\Gamma}{2}\|\D Q_{n}\|^2_{L^{2}}+\frac{c_{*}^{2}\Gamma}{2}\|Q_{n}\|_{L^{6}}^{6}
 +\ve \int_{\mathcal{O}}\big(\gamma \rho_{n}^{\gamma-2}+\delta \b \rho_{n}^{\b-2}\big)|\na \rho_{n}|^{2}\md x  \\
&\leq C\big(\|(u_{n}, Q_{n})\|_{L^{2}}^{2}+\|Q_{n}\|_{L^{4}}^{4}\big),
\end{split}
\en
The above inequality yields
\be
\mu \int_{0}^{T_{n}}\|\na u_{n}\|_{L^{2}}^2\,\md t \leq C,
\en
\be\label{rho-u-n-bound}
\sup_{t\in [0, T_{n}]}\int_{\mathcal{O}}\rho_{n}|u_{n}|^2 \,\md x \leq C,
\en
where $C$ is a constant independent of $n$.
Since   $X_{n}$ is a finite-dimension space, we can deduce from Lemma \ref{solution-rho}
that there exists a constant $C=C(n, c_{0}, \rho_{0}, m_{0}, Q_{0}, b, \mathcal{O})$ such that
\be\label{rho-n-bound}
0< C\leq \rho_{n}(t, x)\leq \frac{1}{C} \qquad\,\, \mbox{for all $t\in (0, T_{n})$ and $x\in \mathcal{O}$},
\en
which, combined with (\ref{rho-u-n-bound}) and the fact that the $L^{\infty}$ and $L^2$ norms are equivalent on $X_{n}$, yields
\begin{equation*}
\sup_{t\in [0, T_{n}]}\|(u_{n}, \na u_{n})(t, \cdot)\|_{L^{\infty}(\mathcal{O})}
\leq C(n, E^{n}_{\delta}(0), N, \mathcal{O}).
\end{equation*}
Then we can extend the existence time-interval $[0, T_{n}]$ of $(c_{n}, \rho_{n}, u_{n}, Q_{n})$ to $[0, T]$.

We summarize the results in this subsection in the following lemma.

\begin{Lemma} \label{FG-n-estimates}
Let $\b \geq 4$. Then there exists a solution $(c_{n}, \rho_{n}, u_{n}, Q_{n})$ of  problem \eqref{c-FG}--\eqref{rho-FG},
\eqref{Q-FG}, and \eqref{variation} with the corresponding initial-boundary data \eqref{I-C-c}--\eqref{B-C-uQ}.
Moreover, the following estimates hold{\rm :}
\begin{align}
&0<\underline{c}\leq c_{n}(t, x)\leq \bar{c}, \quad \|c_{n}\|_{ L^{\infty}(0, T; L^{2}(\mathcal{O}))\cap L^{2}(0, T; H^{1}(\mathcal{O}))}\leq C,\label{c-n}\\[1mm]
&\sup_{t\in [0, T]}\|\rho_{n}(t, \cdot)\|_{L^{\gamma}(\mathcal{O})}^{\gamma}\leq C,\label{rho-n-gamma}\\
&\delta \sup_{t\in [0, T]}\|\rho_{n}(t, \cdot)\|_{L^{\b}(\mathcal{O})}^{\b}\leq C,\label{rho-n-b}\\[1mm]
&\ve \|\na \rho_{n}^{\frac{\gamma}{2}}\|^{2}_{L^{2}(0, T; L^{2}(\mathcal{O}))}
 +\ve \delta \|\na \rho_{n}^{\frac{\b}{2}}\|^{2}_{L^{2}(0, T; L^{2}(\mathcal{O}))}\leq C,\label{na-rho-n-g-b}\\
&\sup_{t\in [0, T]}\|\sqrt{\rho_{n}}(t, \cdot)u_{n}(t, \cdot)\|_{L^{2}(\mathcal{O})}^{2}\leq C,\label{rho-u-n-2}\\
&\|u_{n}\|_{L^{2}(0, T; H^{1}_{0}(\mathcal{O}))}\leq C,\label{u-n-h1}\\[1mm]
&\|\rho_{n}\|_{L^{\b +1}(\mathcal{O}_{T})}\leq C,\label{rho-n-b+1}\\[1mm]
&\ve \|\na \rho_{n}\|^{2}_{L^{2}(0, T; L^{2}(\mathcal{O}))}\leq C,\label{na-rho-n}\\[1mm]
&\|Q_{n}\|_{L^{\infty}(0, T; H^{1}(\mathcal{O})\cap L^{4}(\mathcal{O}))\cap L^{2}(0, T; H^{2}(\mathcal{O})\cap L^{6}(\mathcal{O}))}\leq C,\label{Q-n}\\[1mm]
&\|Q_{n}\|_{L^{10}(\mathcal{O}_{T})}\leq C,\label{Q-n-10}\\[1mm]
&\|\na Q_{n}\|_{L^{\frac{10}{3}}(\mathcal{O}_{T})}\leq C,\label{na-Q-n}
\end{align}
where $C$ is a constant independent of $n$ and $\ve$.
\end{Lemma}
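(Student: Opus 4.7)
Local existence of $(c_n, \rho_n, u_n, Q_n)$ on some interval $[0, T_n]$ with $T_n\leq T$ has already been established in the discussion preceding the statement, via the Banach fixed-point argument on $C(0,T_n;X_n)$ built from the Lipschitz estimates \eqref{continous-S}, \eqref{continous-Q}, and \eqref{continous-M-inverse}. What remains is to establish the uniform estimates \eqref{c-n}--\eqref{na-Q-n}; these will in turn legitimize the iteration that pushes $T_n$ up to the prescribed $T$.

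\textbf{Galerkin energy inequality and the bulk of the estimates.} The heart of the proof is the differential inequality \eqref{energy-estimate-n-pre}. I would differentiate \eqref{variation} in $t$, test with $\psi=u_n\in X_n$, and add the $c_n$-equation tested against $c_n$ together with the $Q_n$-equation tested against $-\bigl(\D Q_n - Q_n - c_\ast Q_n\tr(Q_n^2)\bigr)$. All the algebraic cancellations used in Proposition \ref{energy-inequality}---namely $\mathcal{I}_4+\mathcal{I}_8=0$, $\mathcal{I}_9=0$, and the identity \eqref{cancel}---carry over verbatim, since they only rely on the symmetry and tracelessness of $Q_n$ (guaranteed by Lemma \ref{solution-Q}) and not on any distributional formulation. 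The only new contributions come from the artificial viscosity $\ve\D\rho_n$ and the artificial pressure $\dl\na\rho_n^{\b}$; an integration by parts converts them into the additional dissipation $\ve\!\int_{\mathcal{O}}(\gamma\rho_n^{\gamma-2}+\dl\b\rho_n^{\b-2})|\na\rho_n|^2\,\md x$ on the left-hand side and add $\dl\rho_n^{\b}/(\b-1)$ to the energy $E_\dl^n(t)$. Combined with the $L^\infty$-bound on $c_n$ from the maximum principle in Lemma \ref{solution-Q}, Gronwall's inequality then yields \eqref{c-n}, \eqref{rho-n-gamma}, \eqref{rho-n-b}, \eqref{na-rho-n-g-b}, \eqref{rho-u-n-2}, \eqref{u-n-h1}, and \eqref{Q-n}; the auxiliary bounds \eqref{Q-n-10}--\eqref{na-Q-n} then follow from \eqref{Q-n} through the Gagliardo--Nirenberg interpolation already carried out in Corollary \ref{Q-estimate-addition}. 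The global extension from $[0,T_n]$ to $[0,T]$ proceeds exactly as in the text preceding the lemma: since $X_n$ is finite-dimensional, $L^\infty$- and $L^2$-norms on $X_n$ are equivalent, and the lower bound \eqref{rho-n-bound} from Lemma \ref{solution-rho}(ii) converts \eqref{rho-u-n-2} into a uniform bound on $\|u_n\|_{C([0,T_n];X_n)}$, which allows one to iterate.

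\textbf{The density-gradient bound \eqref{na-rho-n}.} To obtain \eqref{na-rho-n}, I would multiply the parabolic continuity equation \eqref{rho-FG} by $\rho_n$ and integrate by parts using the Neumann condition in \eqref{B-C-rho}, producing
\begin{align*}
\tfrac{1}{2}\tfrac{\md}{\md t}\|\rho_n\|_{L^2}^2+\ve\|\na\rho_n\|_{L^2}^2
= \tfrac{1}{2}\int_{\mathcal{O}}\rho_n^2\,\mathrm{div}\,u_n\,\md x.
\end{align*}
The right-hand side is bounded by $\tfrac{1}{2}\|\rho_n\|_{L^4}^2\|\mathrm{div}\,u_n\|_{L^2}$, which is integrable in $t$ thanks to \eqref{rho-n-b} (valid precisely because $\b\geq 4$) and \eqref{u-n-h1}; time integration then delivers \eqref{na-rho-n}.

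\textbf{Main obstacle: higher integrability \eqref{rho-n-b+1}.} I expect the pressure estimate $\rho_n\in L^{\b+1}(\mathcal{O}_T)$ to be the most delicate point. The natural approach is a Bogovskii-type trick: test the Galerkin momentum equation with $\phi:=\mathcal{B}\bigl[\rho_n - (\rho_n)_{\mathcal{O}}\bigr]$, where $(\rho_n)_{\mathcal{O}}$ is the spatial mean and $\mathcal{B}$ is a bounded right-inverse of the divergence sending mean-zero $L^p$ functions into $W^{1,p}_0$. The pressure contribution then produces $\int_0^T\!\!\int_{\mathcal{O}}(\rho_n^{\gamma+1}+\dl\rho_n^{\b+1})\,\md x\md t$ up to remainders that are lower-order once the previous estimates are in place. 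The technical subtleties are twofold: $\phi$ does not live in $X_n$, so one must project it onto $X_n$ and control the projection error using the $W^{1,\b}$-regularity of $\phi$ afforded by the Bogovskii estimates and \eqref{rho-n-b}; and each remaining term on the right (convective, active $\sigma_\ast c_n^2 Q_n$, elastic $Q_n\D Q_n - \D Q_n Q_n$ and $\mathrm{F}(Q_n)\mathrm{I}_3 - \na Q_n\odot\na Q_n$, and the regularization piece $\ve(\na\rho_n\!\cd\!\na)u_n$) must be absorbed via \eqref{rho-u-n-2}--\eqref{u-n-h1}, \eqref{Q-n}, \eqref{na-rho-n-g-b}, \eqref{na-rho-n}, together with Hölder's inequality. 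The constraint $\b\geq 4$ is precisely what makes all these remainders controllable.
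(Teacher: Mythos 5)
Most of your proposal matches the paper: the Galerkin energy inequality \eqref{energy-estimate-n-pre} with the test functions $u_n$, $c_n$, and $-\bigl(\D Q_n-Q_n-c_*Q_n\tr(Q_n^2)\bigr)$, the cancellations carried over from Proposition \ref{energy-inequality}, the Gronwall step, the extension of $T_n$ to $T$ via the equivalence of norms on $X_n$ and the positive lower bound on $\rho_n$, the interpolation for \eqref{Q-n-10}--\eqref{na-Q-n}, and the multiply-by-$\rho_n$ argument for \eqref{na-rho-n} (where $\b\geq 4$ enters through $\|\rho_n\|_{L^4_x}^2$) are all exactly what the paper does.

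The one genuine divergence is your treatment of \eqref{rho-n-b+1}, and there your route has a real gap. The paper does not use a Bogovskii test function at the Galerkin level; it extracts the higher integrability directly from the artificial-viscosity dissipation: \eqref{na-rho-n-g-b} gives $\rho_n^{\b/2}\in L^2_tH^1_x$ (for the fixed $\ve,\dl$), hence $\rho_n^{\b}\in L^1_tL^3_x$ by Sobolev embedding; interpolating this against $\rho_n^{\b}\in L^\infty_tL^1_x$ from \eqref{rho-n-b} yields $\rho_n\in L^{\frac{4}{3}\b}(\mathcal{O}_T)$, and $\frac{4}{3}\b\geq\b+1$ once $\b\geq 3$. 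Your Bogovskii approach is instead the paper's argument for Lemma \ref{lemma-density}, which is deliberately deferred until \emph{after} the limit $n\to\infty$, precisely because at the Galerkin stage the test function $\mathcal{B}[\rho_n-\bar m]$ does not lie in $X_n$. You acknowledge this, but the proposed fix --- project onto $X_n$ and control the error --- is not a routine step: the $L^2$-orthogonal projection onto the span of Laplacian eigenfunctions is not uniformly bounded on $W^{1,p}_0$ for the exponents you need, so "controlling the projection error" uniformly in $n$ is itself an open technical problem in your write-up rather than a detail. (What your route would buy, if it worked, is an $\ve$-independent constant in \eqref{rho-n-b+1}; the paper's route gives a constant depending on $\ve$ and $\dl$ and recovers $\ve$-independence only later, in Lemma \ref{lemma-density}, once arbitrary $H^1_0$ test functions are admissible.) Also, your assertion that $\b\geq 4$ is "precisely what makes the remainders controllable" in the Bogovskii argument misplaces the role of that hypothesis: in the paper $\b\geq 4$ is needed for \eqref{na-rho-n}, while \eqref{rho-n-b+1} only needs $\b\geq 3$.
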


\begin{proof}
Estimates (\ref{c-n})$-$(\ref{u-n-h1}) and (\ref{Q-n}) follow from the energy
estimate (\ref{energy-estimate-n-pre}).
Moreover, we can use
similar methods to obtain (\ref{Q-n-10})--(\ref{na-Q-n}) as in Corollary \ref{Q-estimate-addition}.
We only need to show (\ref{rho-n-b+1})--(\ref{na-rho-n}).

From \eqref{na-rho-n-g-b}, $\rho_{n}^{\b/2}\in L^{2}_{t}H^{1}_{x}$.
This, together with the embedding: $H^{1}(\mathcal{O})\subset L^{6}(\mathcal{O})$, yields
\be\label{rho-n-b-l3}
\|\rho_{n}^{\b}\|_{L^{1}_{t}L^{3}_{x}}\leq C
\en
with $C$ independent of $n$.
Combining (\ref{rho-n-b}), \eqref{rho-n-b-l3}, and the interpolation (pp. 623, \cite{E-1998}):
\begin{align*}
\|\rho_{n}^{\b}\|_{L^{2}_{x}}\leq C\|\rho_{n}^{\b}\|_{L^{1}_{x}}^{\frac{1}{4}}\|\rho_{n}^{\b}\|_{L^{3}_{x}}^{\frac{3}{4}},
\end{align*}
we have
\begin{align*}
\|\rho_{n}^{\b}\|^{\frac{4}{3}}_{L^{\frac{4}{3}}_{t} L^{2}_{x}}
\leq C\int_{0}^{T}\|\rho_{n}^{\b}\|_{L^{1}_{x}}^{\frac{1}{3}}\|\rho_{n}^{\b}\|_{L^{3}_{x}}\md t \leq C \|\rho_{n}^{\b}\|_{L^{\infty}_{t} L^{1}_{x}}^{\frac{1}{3}}\|\rho_{n}^{\b}\|_{L^{1}_{t} L^{3}_{x}}\leq C,
\end{align*}
which implies that $\rho_{n}\in L^{\frac{4}{3}\b}(0, T; L^{2\b}(\mathcal{O}))$.
Moreover, this, together with the following interpolation:
\begin{align*}
\|\rho_{n}\|_{L^{\frac{4}{3}\b}_{x}}\leq C\|\rho_{n}\|_{L^{\b}_{x}}^{\frac{1}{2}}\|\rho_{n}\|_{L^{2\b}_{x}}^{\frac{1}{2}},
\end{align*}
gives
\begin{align*}
\|\rho_{n}\|_{L^{\frac{4}{3}\b}_{t, x}}^{\frac{4\b}{3}}\leq C\int_{0}^{T}\|\rho_{n}\|_{L^{\b}_{x}}^{\frac{2}{3}\b}\|\rho_{n}\|_{L^{2\b}_{x}}^{\frac{2}{3}\b}\, \md t\leq C(\delta),
\end{align*}
{\it i.e.}, $\rho_{n}\in L^{\frac{4}{3}\b}(\mathcal{O}_{T})$.
Then, if $\b \geq 3$ ($\Leftrightarrow\frac{4}{3}\b\geq \b+1$), we have
\begin{align*}
\|\rho_{n}\|_{L^{\b+1}(\mathcal{O}_{T})}\leq C(\dl).
\end{align*}
Regarding (\ref{na-rho-n}), we multiply (\ref{rho-FG}) by $\rho_{n}$ and integrate by parts over $\mathcal{O}$ to obtain
\begin{align*}
&\frac{1}{2}\frac{\md }{\md t}\|\rho_{n}\|_{L^{2}_{x}}^{2}+\ve\|\na \rho_{n}\|_{L^{2}_{x}}^{2}
=-\frac{1}{2}\int_{\mathcal{O}}\mathrm{div}\, u_{n}\, \rho_{n}^{2}\,\md x \leq C\|\mathrm{div}\, u_{n}\|_{L^{2}_{x}}\|\rho_{n}\|_{L^{4}_{x}}^{2},
\end{align*}
which yields
\begin{align*}
\ve \|\na \rho_{n}\|_{L^{2}(\mathcal{O}_{T})}^{2}
\leq \frac{1}{2}\big(\|\rho_{0}\|_{L^{2}_{x}}^{2}+\sqrt{T}\|\rho_{n}\|^{2}_{L^{\infty}_{t}L^{4}_{x}}\|\mathrm{div} \,u_{n}\|_{L^{2}_{t, x}}\big).
\end{align*}
Therefore, (\ref{na-rho-n}) follows from (\ref{rho-n-b}) and (\ref{u-n-h1}), provided $\b\geq 4$.
\end{proof}

\subsection{The existence of the first level approximate solutions}\label{1st-approx}
In this subsection, we
obtain a solution $(c, \rho, u, Q)$ of  problem (3.1)$-$(\ref{B-C-uQ}),
by letting $n\to \infty$.
We do not distinguish between the sequence convergence and the subsequence convergence for the sake of convenience.
Assume that $\b>4$ and $\gamma>\frac{3}{2}$.
It follows from \cite{F-N-P-2001} that as $n\to \infty$,
\begin{eqnarray}
&&\rho_{n}\to \rho \qquad\,\, \mbox{ in } L^{4}(\mathcal{O}_{T}),\label{rho-n-to-rho-l4}\\
&&\rho_{n}^{\gamma}\to \rho^{\gamma}, \,\,\, \rho_{n}^{\b}\to \rho^{\b} \qquad\,\, \mbox{ in } L^{1}(\mathcal{O}_{T}) \mbox{ if } \b > \gamma, \\
&&u_{n}\rightharpoonup u \qquad\,\, \mbox{ in } L^{2}(0, T; H^{1}_{0}(\mathcal{O}))\label{u-n-to-u-h1}.
\end{eqnarray}
Moreover, we can infer that $\del_{t}c_{n}\in L^{1}(0, T; H^{-1}(\mathcal{O}))$ from \eqref{c-n}, \eqref{u-n-h1},
and that $\del_{t}c$ satisfies equation \eqref{c-FG}.
Applying the Aubin-Lions lemma, we have
\be\label{c-n-conv}
c_{n}\rightharpoonup c\quad  \mbox{in} \,\, L^{2}(0, T; H^{1}(\mathcal{O})),
\qquad \,\, c_{n}\to c \quad \mbox{in}\,\, L^{2}(0, T; L^{2}(\mathcal{O})).
\en
Thus, we know that the limit function $c$ is a weak solution to \eqref{c-FG}.
Similarly, estimates \eqref{c-n}, (\ref{u-n-h1}), and (\ref{Q-n}), along with the fact that $\del_{t}Q_{n}$ satisfies (\ref{Q-FG}),
yield that $\del_{t}Q_{n}\in L^{2}(0, T; L^{\frac{3}{2}}(\mathcal{O}))$.
In fact,
\begin{align*}
&\|(u_{n}\cdot \nabla )Q_{n}\|_{L^{2}_{t}L^{\frac{3}{2}}_{x}}
 \leq C\Big(\int_{0}^{T}\|u_{n}\|_{L^{6}_{x}}^{2}\|\na Q_{n}\|_{L^{2}_{x}}^{2}\md t\Big)^{\frac{1}{2}}
 \leq C\|u_{n}\|_{L^{2}_{t}(H^{1}_{0})_{x}}\|\na Q_{n}\|_{L^{\infty}_{t}L^{2}_{x}}\leq C,\\
&\|Q_{n}\O_{n}-\O_{n}Q_{n}\|_{L^{2}_{t}L^{\frac{3}{2}}_{x}}
 \leq C\Big(\int_{0}^{T}\|Q_{n}\|_{L^{6}_{x}}^{2}\|\na u_{n}\|_{L^{2}_{x}}^{2}\md t\Big)^{\frac{1}{2}}
  \leq C\|Q_{n}\|_{L^{\infty}_{t}H^{1}_{x}}\|\na u_{n}\|_{L^{2}(\mathcal{O}_{T})}\leq C,\\
&\|\Gamma H[Q_{n},c_{n}]\|_{L^{2}_{t}L^{\frac{3}{2}}_{x}}=\Gamma\|\D Q_{n}-\frac{c_{n}-c_{*}}{2}Q_{n}+b[Q_{n}^{2}-\frac{\tr (Q_{n}^{2})}{3}\mathrm{I}_{d}] -c_{*}Q_{n}|Q_{n}|^{2}\|_{L^{2}_{t}L^{\frac{3}{2}}_{x}}\\
&\leq C\Big(\|(Q_n, \D Q_{n})\|_{L^{2}(\mathcal{O}_{T})}
+(\|c_{n}\|_{L^{2}_{t}H^{1}_{x}}
  +\|Q_{n}\|_{L^{2}_{t}H^{2}_{x}})(\|Q_{n}\|_{L^{\infty}_{t}H^{1}_{x}}+\|Q_{n}\|^{2}_{L^{\infty}_{t}H^{1}_{x}})\Big)\leq C.
\end{align*}
Then, by the Aubin-Lions lemma, we have
\be\label{Q-n-conv}
Q_{n}\rightharpoonup Q \,\, \mbox{ in } L^{2}(0, T; H^{2}(\mathcal{O})),\qquad\,\,
Q_{n}\to Q \,\,\mbox{ in } L^{2}(0, T; H^{1}(\mathcal{O})).
\en
This ensures that we can pass to the limit in equation (\ref{Q-FG})
in $\mathcal{D}^{'}(\mathcal{O}_{T})$ as $n\to\infty$, {\it i.e.}, $Q$ is a weak solution of (\ref{Q-FG}).

Furthermore, we also see that $\rho_{n}u_{n}$ is bounded in $L^{\infty}(0, T; L^{\frac{2\gamma}{\gamma +1}}(\mathcal{O}))$
with $\frac{2\gamma}{\gamma +1}>\frac{6}{5}$ (since $\gamma >\frac{3}{2}$), by using (\ref{rho-n-gamma}) and (\ref{rho-u-n-2})--(\ref{u-n-h1}).
In fact,
\be\label{rho-u-n-2g/g+1}
\|\rho_{n}u_{n}\|_{L^{\frac{2\gamma}{\gamma +1}}_{x}}\leq
C\|\rho_{n}\|_{L^{\gamma}_{x}}^{\frac{1}{2}}\|\sqrt{\rho_{n}}u_{n}\|_{L^{2}_{x}}.
\en
This, together with (\ref{rho-n-to-rho-l4}) and (\ref{u-n-to-u-h1}), yields
$$
\rho_{n}u_{n} \overset{*}\rightharpoonup \rho u \quad \mbox{ in } L^{\infty}(0, T; L^{\frac{2\gamma}{\gamma +1}}(\mathcal{O})).
$$
Then we can conclude that $\rho$ is a weak solution of (\ref{rho-FG}) and we can pass to the limit  in (\ref{rho-FG}) as $n\to \infty$.

\smallskip
In the following, we show that the limit function $u$ satisfies equation (\ref{variation}),
by using Corollary \ref{suff-cond-C-Weak-star} in Appendix A.
Then we need to establish convergence results for the terms: $\rho_{n}u_{n}\otimes u_{n}$ and $\na \rho_{n}\cdot \na u_{n}$,
which require more estimates for the density.
From Lemma 2.4 in \cite{F-N-P-2001}, we know that (\ref{rho-FG}) holds in the following strong sense:

\begin{Lemma}\label{strong-conv-rho-n}
There exist $r>1$ and $q>2$ such that
\begin{align*}
&\del_{t}\rho_{n}, \D \rho_{n} \qquad \mbox{ are bounded in }L^{r}(\mathcal{O}_{T}),  \\
&\na \rho_{n} \qquad\qquad\,\, \mbox{ is bounded in } L^{q}(0, T; L^{2}(\mathcal{O})),
\end{align*}
independently with respect to $n$. Consequently, the limit function $\rho$ belongs to the same class
and satisfies equation \eqref{rho-FG} almost everywhere on $\mathcal{O}_{T}$
and the boundary conditions \eqref{B-C-rho} in the sense of traces.
\end{Lemma}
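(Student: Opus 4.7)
The plan is to view \eqref{rho-FG} as a linear parabolic equation for $\rho_n$ with viscosity coefficient $\ve$ and source $F_n := -\nabla\cdot(\rho_n u_n) = -u_n\cdot\nabla\rho_n - \rho_n\,\mathrm{div}\, u_n$, and then to invoke parabolic $L^p$ maximal regularity (of Ladyzhenskaya-Solonnikov-Ural'tseva type) with homogeneous Neumann boundary data and the $C^3$-smooth initial datum $\rho_0$. The uniform-in-$n$ bounds collected in Lemma \ref{FG-n-estimates} supply the starting integrability, and a short bootstrap produces the desired $L^r$ and $L^q(L^2)$ estimates.

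First I would bound $F_n$ in suitable Lebesgue spaces using Lemma \ref{FG-n-estimates}. By \eqref{rho-n-b+1} we have $\rho_n\in L^{\b+1}(\mathcal{O}_T)$ with $\b\geq 4$, and by \eqref{u-n-h1} together with the Sobolev embedding $H^1_0\hookrightarrow L^6$ we have $u_n\in L^2(0,T;L^6(\mathcal{O}))$. Hölder's inequality places $\rho_n\,\mathrm{div}\, u_n$ in $L^{r_0}(\mathcal{O}_T)$ with $r_0=2(\b+1)/(\b+3)>1$, uniformly in $n$, while \eqref{na-rho-n} gives $u_n\cdot\nabla\rho_n\in L^1(0,T;L^{3/2}(\mathcal{O}))$ as a starting estimate. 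The parabolic maximal regularity estimate
\[
\|\del_t\rho_n\|_{L^r(\mathcal{O}_T)} + \|\Delta\rho_n\|_{L^r(\mathcal{O}_T)} + \|\nabla\rho_n\|_{L^r(\mathcal{O}_T)} \leq C\big(\|\rho_0\|_{C^3(\bar{\mathcal{O}})} + \|F_n\|_{L^r(\mathcal{O}_T)}\big)
\]
can then be iterated once or twice: each application upgrades the Lebesgue integrability of $\nabla\rho_n$, which in turn feeds back into a stronger bound on the cross term $u_n\cdot\nabla\rho_n$; after finitely many steps I obtain $\del_t\rho_n,\Delta\rho_n\in L^r(\mathcal{O}_T)$ with some $r>1$, uniformly in $n$. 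The resulting $W^{2,1}_r$-regularity, combined (via Sobolev embedding in time) with \eqref{na-rho-n}, gives $\nabla\rho_n\in L^\infty(0,T;L^s(\mathcal{O}))$ for some $s>2$, and interpolation with the $L^2(L^2)$ bound yields $\nabla\rho_n\in L^q(0,T;L^2(\mathcal{O}))$ for some $q>2$.

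Passage to the limit $n\to\infty$ is then routine: the uniform $L^r$-bounds on $\del_t\rho_n$ and $\Delta\rho_n$ and the $L^q(L^2)$-bound on $\nabla\rho_n$ transfer to $\rho$ by weak lower semicontinuity of norms. Combined with the strong convergence \eqref{rho-n-to-rho-l4} and the weak convergence \eqref{u-n-to-u-h1} (together with identification of the product limits via Aubin-Lions), the limit $\rho$ satisfies \eqref{rho-FG} almost everywhere on $\mathcal{O}_T$, and the Neumann boundary condition \eqref{B-C-rho} holds in the sense of traces owing to the $W^{2,1}_r$-regularity. The principal obstacle is the bootstrap: the cross term $u_n\cdot\nabla\rho_n$ cannot be placed in any $L^r(\mathcal{O}_T)$ with $r>1$ directly from $\nabla\rho_n\in L^2(L^2)$ and $u_n\in L^2(L^6)$ alone, so one must carefully track how the exponent of $\nabla\rho_n$ improves at each iteration while keeping it compatible with the fixed regularity of $u_n$. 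This balance is precisely the content of Lemma 2.4 in \cite{F-N-P-2001}, which the paper invokes directly.
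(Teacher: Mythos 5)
The paper offers no proof of this lemma: it simply quotes Lemma~2.4 of \cite{F-N-P-2001}, whose proof is precisely the reduction you describe (view \eqref{rho-FG} as a linear parabolic Neumann problem with right-hand side $-\mathrm{div}(\rho_n u_n)$ and run $L^p$ maximal regularity against the uniform bounds of Lemma~\ref{FG-n-estimates}). So in approach you and the paper coincide, and your Hölder computation for $\rho_n\,\mathrm{div}\,u_n$ and your diagnosis that $u_n\cdot\na\rho_n$ is the only delicate term are both correct. Since you ultimately defer the exponent bookkeeping to the same citation the paper uses, your proposal is no less complete than the paper's treatment — though one should be aware that this bookkeeping \emph{is} the substance of the lemma.

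One step of your elaboration, however, does not hold as written. From $\del_t\rho_n,\,\D\rho_n\in L^{r}(\mathcal{O}_T)$ with $r$ only slightly larger than $1$ you cannot conclude $\na\rho_n\in L^{\infty}(0,T;L^{s}(\mathcal{O}))$ for some $s>2$: the time-trace of the $W^{2,1}_r$ class lies only in $W^{2-2/r,r}(\mathcal{O})$, which for $r$ near $1$ embeds into $W^{1,p}(\mathcal{O})$ only for $p<2$. Moreover, were that claim true, your subsequent ``interpolation with the $L^2(L^2)$ bound'' would be vacuous, since $L^{s}(\mathcal{O})\subset L^{2}(\mathcal{O})$ on a bounded domain already gives $\na\rho_n\in L^{\infty}(0,T;L^{2}(\mathcal{O}))$. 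The correct (and shorter) route to the $L^{q}(0,T;L^{2})$ bound is to integrate by parts using the Neumann condition,
\begin{align*}
\|\na\rho_n(t)\|_{L^{2}(\mathcal{O})}^{2}=-\int_{\mathcal{O}}\rho_n\,\D\rho_n\,\md x
\leq \|\rho_n(t)\|_{L^{r'}(\mathcal{O})}\,\|\D\rho_n(t)\|_{L^{r}(\mathcal{O})},
\end{align*}
which, once the bootstrap has produced $r\geq\b/(\b-1)$ (so that $r'\leq\b$ and $\rho_n\in L^{\infty}(0,T;L^{\b}(\mathcal{O}))$ applies; for $\b\geq 4$ this is compatible with your $r_0=2(\b+1)/(\b+3)$), yields $\na\rho_n\in L^{2r}(0,T;L^{2}(\mathcal{O}))$ with $q=2r>2$, uniformly in $n$ for fixed $\ve,\delta$. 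With that repair the outline matches the cited argument.
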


To continue the proof we first show that $\int_{\mathcal{O}}\rho_{n}u_{n}\cdot \psi\, \mathrm{d}x$ is equi-continuous in $t$,
for any fixed test function $\psi \in X_{n}$ in (\ref{variation}).
Using Lemmas \ref{FG-n-estimates}--\ref{strong-conv-rho-n}, we see that, for any $0<\xi <1$,
\begin{align*}
&\int_{t}^{t+\xi}\int_{\mathcal{O}}(\mu \D u_{n}+(\mu+\nu)\na \mathrm{div}\, u_{n})\cdot \psi\, \md x\md s
\leq C\int_{t}^{t+\xi}\|\na u_{n}\|_{L^{2}_{x}}\|\na \psi\|_{L^{2}_{x}}\,\md s
\leq C \sqrt{\xi},\\
&\int_{t}^{t+\xi}\int_{\mathcal{O}}\mathrm{div}\,(\rho_{n} u_{n}\otimes u_{n})\cdot \psi\, \md x\md s
  \leq  \int_{t}^{t+\xi}\|\sqrt{\rho_{n}}u_{n}\|_{L^{2}_{x}}^{2}\|\na \psi \|_{L^{\infty}_{x}}\,\md s\leq C\xi,\\[1mm]
&\int_{t}^{t+\xi}\int_{\mathcal{O}}\na (\rho^{\gamma}_{n}+\delta \rho_{n}^{\b})\cdot \psi\, \md x\md s
 \leq  \int_{t}^{t+\xi}\big(\|\rho_{n}\|_{L^{\gamma}_{x}}^{\gamma}+\delta \|\rho_{n}\|_{L^{\b}_{x}}^{\b}\big)\|\mathrm{div}\,\psi\|_{L^{\infty}_{x}}\,\md s\leq C\xi,
\end{align*}
\begin{align*}
\ve \int_{t}^{t+\xi}\int_{\mathcal{O}} \na \rho_{n} \cdot \na u_{n}\cdot \psi\, \md x\md s
\leq C\ve \|\na \rho_{n}\|_{L^{q}_{t}L^{2}_{x}}\|\na u_{n}\|_{L^{2}_{t}L^{2}_{x}}\xi^{\frac{1}{2}-\frac{1}{q}}
  \leq C\ve \xi^{\frac{1}{2}-\frac{1}{q}} \quad \mbox{for $q>2$},
\end{align*}
\begin{align*}
&\int_{t}^{t+\xi}\int_{\mathcal{O}}\na \cdot \mathrm{F}(Q_{n})\mathrm{I}_{3}
\cdot \psi \,\md x\md s
=-\frac{1}{2}\int_{0}^{t+\xi}\int_{\mathcal{O}}\big(|(Q_n, \na Q_{n})|^2
+\frac{c_{*}}{2}|Q_{n}|^{4}\big)\mathrm{div}\, \psi \,\md x \md s\\
&\quad \leq C\int_{t}^{t+\xi}\big(\|(Q_n, \na Q_{n})\|_{L^{2}_{x}}^{2}
+\|Q_{n}\|_{L^{4}_{x}}^{4}\big)\|\mathrm{div}\,\psi \|_{L^{\infty}_{x}}\,\md s
\leq
C\xi,\\
&\int_{t}^{t+\xi}\int_{\mathcal{O}}\na \cdot\big(-\na Q_{n}\odot \na Q_{n}+ Q_{n}\D Q_{n}-\D Q_{n}Q_{n}+\sigma_{*}c^{2}_{n} Q_{n}\big)\cdot \psi \,\md x\md s\\
&\quad \leq C\int_{t}^{t+\xi}\big((\|\na Q_{n}\|_{L^{2}_{x}}^{2}+\|Q_{n}\|_{L^{2}_{x}}\|\D Q_{n}\|_{L^{2}_{x}})\|\na \psi \|_{L^{\infty}_{x}}
  +\|c_{n}\|_{L^{\infty}}^{2}\|Q_{n}\|_{L^{2}_{x}}\|\na \psi \|_{L^{2}_{x}}\big) \,\md s\\
&\quad \leq C\big(\xi +\sqrt{\xi}\big).
\end{align*}
Together with the fact that $\rho_{n}u_{n}$ is uniformly bounded in $L^{\infty}(0, T; L^{\frac{2\gamma}{\gamma +1}}(\mathcal{O}))$
with respect to $n$ and $X_{n}$ is dense in $L^{\frac{\gamma-1}{2\gamma}}(\mathcal{O})$,
we conclude by Corollary \ref{suff-cond-C-Weak-star} that
\be\label{rho-u-n-conv-strong}
\rho_{n}u_{n}\to \rho u \qquad\,\, \mbox{in} \,\, C([0, T]; L_{\rm weak}^{\frac{2\gamma}{\gamma +1}}(\mathcal{O})) \,\,\, \mbox{ as }\; n\to \infty.
\en
Since $\frac{2\gamma}{\gamma+1}>\frac{6}{5}$ ($\gamma >\frac{3}{2}$), by Proposition \ref{conv-h-1}, \eqref{rho-u-n-conv-strong} yields
$$
\rho_{n}u_{n}\to \rho u \qquad \mbox{in}\,\,\, C([0, T]; H^{-1}(\mathcal{O})).
$$
This, combined with (\ref{u-n-to-u-h1}), yields
\be\label{rho-u-u-n-conv}
\rho_{n}u_{n}\otimes u_{n} \to \rho u\otimes u \qquad\,\, \mbox{in}\,\,\, \mathcal{D}^{'}(\mathcal{O}_{T}).
\en

Next, let us elaborate on the convergence result for the term: $\na u_{n}\cdot \na \rho_{n}$.
We multiply (\ref{rho-FG}) by $\rho_{n}$ and integrate by parts to obtain
\be\label{rho-n-L2}
\|\rho_{n}(t, \cdot)\|_{L^{2}_{x}}^{2}+2\ve \int_{0}^{t}\|\na \rho_{n}(t, \cdot)\|_{L^{2}_{x}}^{2}\,\md s
=-\int_{0}^{t}\int_{\mathcal{O}}\mathrm{div}\, u_{n}\,|\rho_{n}|^{2}\,\md x\md s +\|\rho_{0}\|_{L^{2}_{x}}^{2}.
\en
By Lemma \ref{strong-conv-rho-n}, we know that the limit function $\rho$ also satisfies (\ref{rho-FG}).
Applying the same argument to $\rho$ as above, we have
\be\label{rho-L2}
\|\rho(t, \cdot)\|_{L^{2}_{x}}^{2}+2\ve \int_{0}^{t}\|\na \rho(t, \cdot)\|_{L^{2}_{x}}^{2}\,\md s
=-\int_{0}^{t}\int_{\mathcal{O}}\mathrm{div}\, u|\rho|^{2}\,\md x\md s +\|\rho_{0}\|_{L^{2}_{x}}^{2}.
\en
Differentiating (\ref{rho-n-L2}) with respect to $t$,
we use (\ref{rho-n-b}), (\ref{u-n-h1}), and Lemma \ref{strong-conv-rho-n} to obtain
\begin{align*}
\frac{\md}{\md t}\|\rho_{n}(t, \cdot)\|_{L^{2}_{x}}^{2}
=-2\ve \|\na \rho_{n}(t, \cdot)\|_{L^{2}_{x}}^{2}-\int_{\mathcal{O}}\mathrm{div}\, u_{n}|\rho_{n}|^{2}\,\md x
  +\|\rho_{0}\|_{L^{2}_{x}}^{2} \in L^{q}(0, T),\;\;  1<q<2,
\end{align*}
which implies that $\|\rho_{n}(t, \cdot)\|_{L^{2}_{x}}^{2}$ is equi-continuous.
Then we conclude that $\|\rho_{n}(t, \cdot)\|_{L^{2}_{x}}^{2}$ converges in $C([0, T])$ by the Arzela-Ascoli theorem.
Moreover, from (\ref{rho-n-to-rho-l4}), (\ref{u-n-to-u-h1}), (\ref{rho-n-L2})--(\ref{rho-L2}),
and Lemma \ref{strong-conv-rho-n}, we have
\begin{align*}
&\|\rho_{n}(t, \cdot)\|_{L^{2}_{x}}\to \|\rho(t, \cdot)\|_{L^{2}_{x}} \qquad \mbox{for any}\; t, \\
&\|\na \rho_{n}\|_{L^{2}(\mathcal{O}_{T})}\to \|\na \rho\|_{L^{2}(\mathcal{O}_{T})}.
\end{align*}
Since $\na \rho_{n}\rightharpoonup \na \rho$, it yields
$$
\na \rho_{n}\to \na \rho \qquad \mbox{in}\; L^{2}(\mathcal{O}_{T}).
$$
Then we have
$$
\na \rho_{n}\cdot \na u_{n}\to \na \rho \cdot \na u \qquad \mbox{in} \;\mathcal{D}^{'}(\mathcal{O}_{T}).
$$
In addition, by \eqref{c-n-conv}--\eqref{Q-n-conv}, we have
\begin{align*}
&\int_{0}^{t}\int_{\mathcal{O}}\na \cdot \big(\mathrm{F}(Q_{n})\mathrm{I}_{3}-\na Q_{n}\odot \na Q_{n}
+ Q_{n}\D Q_{n}-\D Q_{n}Q_{n}+\sigma_{*} c_{n}^{2}Q_{n}\big)\cdot \psi \,\md x\md s\\
&\,\,\to \int_{0}^{t}\int_{\mathcal{O}}\na \cdot \big(\mathrm{F}(Q)\mathrm{I}_{3}-\na Q\odot \na Q
+ Q\D Q-\D QQ+\sigma_{*} c^{2}Q\big)\cdot \psi \,\md x\md s.
\end{align*}
Then we can pass to the limit  in equation (\ref{variation}) as $n\to \infty$.
We deduce that the limit function $(c, \rho, u, Q)$ is a weak solution of problem (\ref{c-FG})$-$(\ref{B-C-uQ}).
Finally, let us summarize the results in this section in the following:

\begin{Proposition}\label{existence-FG}
Suppose $\b >\max\{4, \gamma\}$. Then there exists a weak solution $(c, \rho, u, Q)$ of
problem \eqref{c-FG}$-$\eqref{B-C-uQ} with the same estimates as in Lemma {\rm \ref{FG-n-estimates}}
and $Q\in S^{3}_{0}$ a.e. in $\mathcal{O}_{T}$. Moreover, the energy inequality 
\eqref{energy-estimate-n-pre} and estimates \eqref{c-n}$-$\eqref{na-Q-n} hold for $(c, \rho, u, Q)$.
Finally, we can find $r>1$ such that $\rho_{t}, \D \rho \in L^{r}(\mathcal{O}_{T})$,
and equation \eqref{rho-FG} is satisfied a.e. in $\mathcal{O}_{T}$.
\end{Proposition}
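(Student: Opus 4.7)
The plan is to assemble the proposition directly from the uniform estimates on the Faedo--Galerkin approximations $(c_n,\rho_n,u_n,Q_n)$ already secured in Lemma \ref{FG-n-estimates}, and to pass to the limit $n\to\infty$. I would first fix $\b>\max\{4,\gamma\}$ so that every bound in Lemma \ref{FG-n-estimates} is available, then extract a weakly (or weak-$*$) convergent subsequence, upgrade to strong convergence wherever a nonlinear term demands it, and finally verify that the limit satisfies each of \eqref{c-FG}--\eqref{Q-FG} together with the variational identity \eqref{variation} and the stated {\it a.e.} equation for $\rho$.

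The concentration $c_n$ and the $Q$-tensor $Q_n$ are the easy part. From \eqref{c-n}, \eqref{u-n-h1} and \eqref{Q-n}, together with equations \eqref{c-FG} and \eqref{Q-FG} themselves, one reads off $\del_t c_n\in L^1(0,T;H^{-1})$ and $\del_t Q_n\in L^2(0,T;L^{3/2})$ after estimating the cubic-in-$Q$ and bilinear-in-$(u,\na Q)$ terms by H\"older. The Aubin--Lions lemma then gives $c_n\to c$ in $L^2(0,T;L^2)$ and $Q_n\to Q$ in $L^2(0,T;H^1)$, together with weak convergence in the higher-regularity spaces. These convergences are strong enough to pass to the limit in all nonlinearities of \eqref{c-FG} and \eqref{Q-FG}; tracelessness $Q\in S_0^3$ {\it a.e.} is inherited from Lemma \ref{solution-Q}.

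The main obstacle is the momentum equation \eqref{variation}, where the nonlinear quantities $\rho_n u_n\otimes u_n$ and $\ve\na\rho_n\cdot\na u_n$ require genuine strong convergence of both $\rho_n u_n$ and $\na\rho_n$. For the density I would invoke Lemma \ref{strong-conv-rho-n} (the parabolic regularity from \cite{F-N-P-2001}), which yields $\rho_n\to\rho$ in $L^4(\mathcal O_T)$ and $\rho_n^\gamma\to\rho^\gamma$, $\rho_n^\b\to\rho^\b$ in $L^1(\mathcal O_T)$. For $\rho_nu_n$, the bound $\|\rho_n u_n\|_{L^\infty_t L^{2\gamma/(\gamma+1)}_x}\leq C$ from \eqref{rho-u-n-2g/g+1} must be combined with equi-continuity in $t$: testing \eqref{variation} against a fixed basis function $\psi\in X_n$, each right-hand side piece can be bounded by $C\xi$ or $C\sqrt\xi$ on $[t,t+\xi]$ using the uniform estimates. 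Corollary \ref{suff-cond-C-Weak-star} then gives $\rho_n u_n\to\rho u$ in $C([0,T];L^{2\gamma/(\gamma+1)}_{\rm weak})$, and since $2\gamma/(\gamma+1)>6/5$ when $\gamma>3/2$, Proposition \ref{conv-h-1} upgrades this to $C([0,T];H^{-1})$; pairing with $u_n\rightharpoonup u$ in $L^2(0,T;H^1_0)$ yields $\rho_n u_n\otimes u_n\to \rho u\otimes u$ in $\mathcal D'(\mathcal O_T)$.

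For $\na\rho_n$ the trick is to test \eqref{rho-FG} against $\rho_n$, giving an energy identity in which $\|\rho_n(t,\cdot)\|_{L^2}^2$ is equi-continuous (its time-derivative is controlled in some $L^q(0,T)$ by Lemma \ref{strong-conv-rho-n} and \eqref{rho-n-b}, \eqref{u-n-h1}), so Arzel\`a--Ascoli forces it to converge in $C([0,T])$ to the corresponding norm for the limit $\rho$. Matching that identity with the analogous one satisfied by $\rho$ (legitimate because $\rho$ itself solves \eqref{rho-FG} {\it a.e.}) yields $\|\na\rho_n\|_{L^2(\mathcal O_T)}\to\|\na\rho\|_{L^2(\mathcal O_T)}$, which with weak convergence promotes to strong convergence $\na\rho_n\to\na\rho$ in $L^2(\mathcal O_T)$; hence $\ve\na\rho_n\cdot\na u_n\to\ve\na\rho\cdot\na u$ in $\mathcal D'(\mathcal O_T)$. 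The stress-tensor terms involving $\mathrm F(Q_n)\mathrm I_3-\na Q_n\odot\na Q_n+Q_n\D Q_n-\D Q_n Q_n+\sigma_*c_n^2 Q_n$ pass by the convergences in \eqref{c-n-conv}--\eqref{Q-n-conv}. The estimates in Lemma \ref{FG-n-estimates} survive in the limit by lower semicontinuity, the energy inequality \eqref{energy-estimate-n-pre} likewise, and the pointwise equation for $\rho$ plus the regularity $\rho_t,\D\rho\in L^r(\mathcal O_T)$ are exactly the content of Lemma \ref{strong-conv-rho-n} applied to the limit.
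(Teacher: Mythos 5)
Your proposal is correct and follows essentially the same route as the paper: Aubin--Lions for $c_n$ and $Q_n$ via the $L^1_tH^{-1}_x$ and $L^2_tL^{3/2}_x$ bounds on their time derivatives, the equi-continuity argument with Corollary \ref{suff-cond-C-Weak-star} and Proposition \ref{conv-h-1} to handle $\rho_nu_n\otimes u_n$, the $L^2$-energy identity for \eqref{rho-FG} matched against the one for the limit $\rho$ to upgrade $\na\rho_n\rightharpoonup\na\rho$ to strong convergence, and Lemma \ref{strong-conv-rho-n} for the parabolic regularity $\rho_t,\D\rho\in L^r(\mathcal O_T)$. No gaps.
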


\section{The Vanishing Artificial Viscosity Limit}

In this section, we let $\ve\to 0$ in (\ref{c-FG})$-$(\ref{Q-FG}).
We denote $(c_{\ve}, \rho_{\ve}, u_{\ve}, Q_{\ve})$ the solution of problem (\ref{c-FG})$-$(\ref{B-C-uQ}),
which we have obtained in Proposition \ref{existence-FG}.
However, unlike the previous step, we do not have the higher integrability of the density
as in Lemma \ref{strong-conv-rho-n}.
The boundedness of $\rho_{\ve}$ in $L^{\infty}(0, T; L^{\gamma}(\mathcal{O})\cap L^{\b}(\mathcal{O}))$
can guarantee only that $\rho_{\ve}^{\b}$ converges to a Radon measure as $\ve \to 0$,
which is not easy to deal with.
Thus, it is essential to obtain the strong compactness of $\rho_{\ve}$ in $L^{1}(\mathcal{O}_{T})$.
First, we introduce the useful operator $\mathcal{B}$ related to the equation: $\mathrm{div}\, v=f$.
See \cite{B-1980,B-S-1990,G-1994} for the construction and the proof of the following properties of the operator $\mathcal{B}$:
For the problem
\be\label{div-v=f}
\mathrm{div}\, v=f, \qquad v|_{\del \mathcal{O}}=0,
\en
there exists a linear operator $\mathcal{B}=[B_{1}, B_{2}, B_{3}]$ with the following properties:
\begin{enumerate}
\item[(i)] $\mathcal{B}: \{ f\in L^{p}(\mathcal{O}) \,:\, \int_{\mathcal{O}}f\, \md x=0\}\mapsto \big(W^{1, p}_{0}(\mathcal{O})\big)^{3}$
is a bounded linear operator such that, for any $1<p<\infty$,
\be\label{B-W-1-p}
\|\mathcal B [f]\|_{W^{1, p}_{0}(\mathcal{O})}\leq C(p)\|f\|_{L^{p}(\mathcal{O})};
\en
\item[(ii)] $v=\mathcal{B} [f]$ is a solution of problem (\ref{div-v=f});
\item[(iii)] If there is a vector function $\mathbf{g}\in \big(L^{r}(\mathcal{O})\big)^{3}$
with $\mathbf{g}\cdot \vec{n}|_{\del \mathcal{O}}=0$, then
\be\label{B-div-r}
\|\mathcal B [\mathrm{div}\, \mathbf{g}]\|_{L^{r}(\mathcal{O})}\leq C(p)\|\mathbf{g}\|_{L^{r}(\mathcal{O})},
\en
where $r\in (1, \infty)$ is arbitrary.
\end{enumerate}

\subsection{Estimates of the density independent of $\epsilon$}\label{density-ind-ve}

We take the quantities:
\be\label{test-function-ve}
\psi(t)\mathcal{B}[\rho_{\ve}-\bar{m}],
\en
with $\psi \in \mathcal D (0, T)$, $0\leq \psi \leq 1$, and $\bar{m}=\frac{1}{|\mathcal{O}|}\int_{\mathcal{O}}\rho_{0}(x)\md x$,
as test functions for (\ref{u-FG}). Note that $\bar{m}$ is a constant such that this test function is well defined. We have the following result.

\begin{Lemma}\label{lemma-density}
Assume that $(c_{\ve}, \rho_{\ve}, u_{\ve}, Q_{\ve})$ is the solution of problem \eqref{c-FG}$-$\eqref{B-C-uQ}
constructed  in Proposition {\rm \ref{existence-FG}}. Then
$$
\|\rho_{\ve}\|^{\gamma +1}_{L^{\gamma +1}(\mathcal{O}_{T})}+\delta \|\rho_{\ve}\|^{\b +1}_{L^{\b +1}(\mathcal{O}_{T})}\leq C,
$$
with $C$
independent of $\ve$.
\end{Lemma}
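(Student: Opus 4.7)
The plan is to follow the classical Bogovskii-type test-function argument of Feireisl-Novotn\'y-Petzeltov\'a, adapted to account for the extra forcing terms coming from the $Q$-tensor and the active stress. The mean of $\rho_{\ve}$ is conserved (modulo the $\ve\Delta\rho_{\ve}$ flux which vanishes after integration thanks to the Neumann condition \eqref{B-C-rho}), so $\int_{\mathcal{O}}(\rho_{\ve}-\bar m)\,\mathrm{d}x=0$ for all $t$, and hence $\mathcal{B}[\rho_{\ve}-\bar m]$ is well defined and belongs to $W^{1,p}_0(\mathcal{O})$ for every $p\in(1,\infty)$ by property (i) of $\mathcal{B}$.

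First, I would use $\psi(t)\mathcal{B}[\rho_{\ve}-\bar m]$ as a test function in the weak formulation of the approximate momentum equation \eqref{u-FG}. After an integration by parts in $t$ on the term $\partial_t(\rho_{\ve}u_{\ve})\cdot\mathcal{B}[\rho_{\ve}-\bar m]$ and an integration by parts in $x$ on $\nabla(\rho_{\ve}^{\gamma}+\delta\rho_{\ve}^{\b})\cdot\mathcal{B}[\rho_{\ve}-\bar m]$, the principal term becomes
\begin{equation*}
\int_0^T\!\!\int_{\mathcal{O}}\psi(t)\bigl(\rho_{\ve}^{\gamma}+\delta\rho_{\ve}^{\b}\bigr)\bigl(\rho_{\ve}-\bar m\bigr)\,\mathrm{d}x\,\mathrm{d}t,
\end{equation*}
since $\mathrm{div}\,\mathcal{B}[\rho_{\ve}-\bar m]=\rho_{\ve}-\bar m$. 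This already generates the desired $L^{\gamma+1}$ and $\delta L^{\b+1}$ norms, up to a lower-order $\bar m(\rho_{\ve}^{\gamma}+\delta\rho_{\ve}^{\b})$ piece that is absorbed by Young's inequality together with \eqref{rho-n-gamma}--\eqref{rho-n-b}.

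Next, I would estimate every remaining term on the right-hand side and show it is bounded uniformly in $\ve$. The crucial inputs are (a) the boundedness of $\mathcal{B}$ from $L^p\to W^{1,p}_0$ and from $\mathrm{div}$ of $L^r$ vector fields to $L^r$ (properties \eqref{B-W-1-p}--\eqref{B-div-r}), together with (b) the uniform bounds of Lemma \ref{FG-n-estimates}, which, by Proposition \ref{existence-FG}, persist for $(c_{\ve},\rho_{\ve},u_{\ve},Q_{\ve})$. The viscous terms $\mu\Delta u_{\ve}+(\mu+\nu)\nabla\mathrm{div}\,u_{\ve}$ pair against $\nabla\mathcal{B}[\rho_{\ve}-\bar m]$, which is controlled in $L^{\b}_x$ by $\|\rho_{\ve}\|_{L^{\b}_x}$; the convective term $\mathrm{div}(\rho_{\ve}u_{\ve}\otimes u_{\ve})$ is handled by Sobolev embedding once one writes $\rho_{\ve}u_{\ve}\otimes u_{\ve}\in L^q_{t,x}$ for some $q>1$ using \eqref{rho-n-b+1} and \eqref{u-n-h1}; the $\ve$-term $\ve\nabla\rho_{\ve}\cdot\nabla u_{\ve}$ is controlled by \eqref{na-rho-n} (Lemma \ref{FG-n-estimates}) multiplied by a uniform bound on $\nabla\mathcal{B}[\rho_{\ve}-\bar m]$; and the $Q$-tensor/active stresses $\nabla\cdot(\mathrm{F}(Q_{\ve})\mathrm{I}_3-\nabla Q_{\ve}\odot\nabla Q_{\ve}+Q_{\ve}\Delta Q_{\ve}-\Delta Q_{\ve}Q_{\ve}+\sigma_{*}c_{\ve}^2 Q_{\ve})$ are integrated by parts onto $\nabla\mathcal{B}[\rho_{\ve}-\bar m]$ and then estimated using \eqref{Q-n}, \eqref{Q-n-10}, \eqref{na-Q-n}, and the $L^\infty$-bound on $c_{\ve}$ from \eqref{c-n}, all paired with $\|\nabla\mathcal{B}[\rho_{\ve}-\bar m]\|_{L^{p'}_x}\lesssim\|\rho_{\ve}\|_{L^{p'}_x}$ for suitable $p'$.

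The time-derivative term requires particular care: using the approximate continuity equation one gets
\begin{equation*}
\partial_t\mathcal{B}[\rho_{\ve}-\bar m]=-\mathcal{B}\bigl[\mathrm{div}(\rho_{\ve}u_{\ve})\bigr]+\ve\,\mathcal{B}[\Delta\rho_{\ve}],
\end{equation*}
where each piece is mean-zero by the boundary conditions. The first is bounded in $L^{r}_{t,x}$ using \eqref{B-div-r} and the bound on $\rho_{\ve}u_{\ve}$, and the second is controlled by $\ve\|\nabla\rho_{\ve}\|_{L^2_{t,x}}$ which, by \eqref{na-rho-n}, is at worst $O(\sqrt{\ve})$. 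The resulting pairing $\int\rho_{\ve}u_{\ve}\cdot\partial_t\mathcal{B}[\rho_{\ve}-\bar m]$ is estimated by H\"older and Sobolev embedding against $\sqrt{\rho_{\ve}}u_{\ve}\in L^\infty_t L^2_x$ and $\rho_{\ve}\in L^\infty_t L^{\b}_x$.

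The main obstacle I anticipate is choosing the integrability exponents so that every estimate closes uniformly in $\ve$ while producing exactly the $\rho_{\ve}^{\gamma+1}$ and $\delta\rho_{\ve}^{\b+1}$ terms on the left. In particular, the convective term $\rho_{\ve}u_{\ve}\otimes u_{\ve}\cdot\nabla\mathcal{B}[\rho_{\ve}-\bar m]$ and the $Q$-tensor stress term $\nabla Q_{\ve}\odot\nabla Q_{\ve}:\nabla\mathcal{B}[\rho_{\ve}-\bar m]$ are the tightest: for the former one needs $\gamma>\tfrac{3}{2}$ to secure the embedding $L^\infty_t L^{\gamma}_x\cap L^2_t L^{3\gamma}_x$ style interpolation for $\rho_{\ve}u_{\ve}\otimes u_{\ve}$; for the latter one uses $\nabla Q_{\ve}\in L^{10/3}_{t,x}$ from \eqref{na-Q-n} combined with $\rho_{\ve}\in L^\infty_tL^{\b}_x$. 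Once these are balanced, an application of Young's inequality absorbs the $\rho_{\ve}^{\gamma+1}$ and $\rho_{\ve}^{\b+1}$ contributions from the right into the left, and a standard localization argument (choosing $\psi$ to approximate $\chi_{(0,T)}$) gives the claimed uniform bound.
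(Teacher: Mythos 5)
Your proposal follows the paper's proof essentially line by line: the same Bogovskii test function $\psi(t)\mathcal{B}[\rho_\ve-\bar m]$, the same observation that $\rho_\ve-\bar m$ has zero mean so that $\mathcal{B}$ is applicable, the same decomposition producing $\int\psi\,(\rho_\ve^{\gamma+1}+\delta\rho_\ve^{\beta+1})$ on the left with the remainder terms estimated via \eqref{B-W-1-p}--\eqref{B-div-r}, the uniform bounds of Lemma \ref{FG-n-estimates}, and the fact that $\sqrt\ve\,\|\nabla\rho_\ve\|_{L^2_{t,x}}\leq C$ makes the $\ve$-weighted contributions vanish. The only small misattribution is your claim that the convective term requires $\gamma>\frac{3}{2}$: at this stage $\delta>0$ is fixed and the constant is allowed to depend on $\delta$, so the paper closes that estimate simply with $\rho_\ve\in L^\infty_t L^3_x$ (free from $\beta\geq4$); the constraint $\gamma>\frac{3}{2}$ only becomes binding in the $\delta\to0$ passage of Lemma \ref{uniform-rho-dl}, where the $L^\beta$ bound is lost.
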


\begin{proof} The proof is similar to Lemma 3.1 in \cite{F-N-P-2001}.
Let us apply the test function (\ref{test-function-ve}) to (\ref{u-FG}).
Then, by a direct calculation, we have
\begin{align}
\int_{0}^{T}&\int_{\mathcal{O}}\psi \big(\rho_{\ve}^{\gamma +1}+\delta \rho_{\ve}^{\b +1}\big)\,\md x \md t\nonumber\\
=&\, \bar{m}\int_{0}^{T}\psi \int_{\mathcal{O}}(\rho_{\ve}^{\gamma}+\delta \rho_{\ve}^{\b})\,\md x \md t
  +(\mu +\nu)\int_{0}^{T}\psi \int_{\mathcal{O}}(\rho_{\ve}-\bar{m})\mathrm{div}\, u_{\ve}\,\md x \md t \nonumber\\
&\,-\int_{0}^{T}\psi_{t} \int_{\mathcal{O}}\rho_{\ve}u_{\ve}\cdot \mathcal B [\rho_{\ve}-\bar{m}]\,\md x \md t
  +\mu \int_{0}^{T}\psi \int_{\mathcal{O}}\del_{j}u_{\ve}^{i}\del_{j} B_{i} [\rho_{\ve}-\bar{m}]\,\md x \md t\nonumber \\
&\, -\int_{0}^{T}\psi \int_{\mathcal{O}} \rho_{\ve}u_{\ve}^{i}u_{\ve}^{j}\del_{j} B_{i} [\rho_{\ve}-\bar{m}]\,\md x \md t
  -\ve \int_{0}^{T}\psi \int_{\mathcal{O}}\rho_{\ve}u_{\ve}\cdot \mathcal{B}(\D \rho_{\ve})\,\md x \md t\nonumber\\
&\,+\int_{0}^{T}\psi \int_{\mathcal{O}}\rho_{\ve}u_{\ve}\cdot \mathcal B [\mathrm{div}\,(\rho_{\ve}u_{\ve})]\,\md x \md t
  +\ve \int_{0}^{T}\psi \int_{\mathcal{O}}\del_{j} u^{i}_{\ve} \del_{j}\rho_{\ve} B_{i} [\rho_{\ve}-\bar{m}]\, \md x \md t\nonumber\\
&\,+\int_{0}^{T}\psi \int_{\mathcal{O}}\na \cdot (\na Q_{\ve}\otimes \na Q_{\ve}-\mathrm{F}(Q_{\ve})\mathrm{I}_{3})\cdot  \mathcal B [\rho_{\ve}-\bar{m}]\,\md x \md t\nonumber\\
&\,
+\sigma_{*}\int_{0}^{T}\psi \int_{\mathcal{O}}c_{\ve}^{2}Q_{\ve}: \na \mathcal B [\rho_{\ve}-\bar{m}]\,\md x \md t\nonumber\\
&\,-\int_{0}^{T}\psi \int_{\mathcal{O}}\na \cdot (Q_{\ve}\D Q_{\ve}-\D Q_{\ve}Q_{\ve})\cdot \mathcal B [\rho_{\ve}-\bar{m}]\,\md x \md t\nonumber\\
=&\, \sum_{i=1}^{11}\i_{i}.
\label{rho-ve-higher-integrability}
\end{align}
Next, we estimate the terms on the right-hand side of the above equality by using the boundedness
of solution $(c_{\ve}, \rho_{\ve}, u_{\ve}, Q_{\ve})$ obtained in Proposition \ref{existence-FG},
in which the universal constant $C>0$ is independent of $\ve$:
\begin{align*}
|\i_{1}|&=\Big|\bar{m}\int_{0}^{T}\psi \int_{\mathcal{O}}(\rho_{\ve}^{\gamma}+\delta \rho_{\ve}^{\b})\,\md x \md t\Big|
\leq C\big(\|\rho_{\ve}\|_{L^{\infty}_{t}L^{\gamma}_{x}}^{\gamma}+\delta \|\rho_{\ve}\|_{L^{\infty}_{t}L^{\b}_{x}}^{\b}\big)\leq C.\\
|\i_{2}|&=(\mu +\nu)\big|\int_{0}^{T}\psi \int_{\mathcal{O}}(\rho_{\ve}-\bar{m})\mathrm{div}\, u_{\ve}\,\md x \md t\big|\\
&\leq  C\sqrt{T}\big(\|\rho_{\ve}\|_{L^{\infty}_{t}L^{2}_{x}}+\bar{m}|\mathcal{O}|^{\frac{1}{2}}\big)\|\mathrm{div} \, u_{\ve}\|_{L^{2}_{t, x}}\leq C.
\end{align*}
\begin{align*}
|\i_{3}|&=\Big|\int_{0}^{T}\psi_{t} \int_{\mathcal{O}}\rho_{\ve}u_{\ve}\cdot \mathcal B [\rho_{\ve}-\bar{m}]\,\md x \md t\Big|
\leq C\int_{0}^{T} \|\sqrt{\rho_{\ve}}\|_{L^{4}_{x}} \|\sqrt{\rho_{\ve}}u_{\ve}\|_{L^{2}_{x}} \|\mathcal B [\rho_{\ve}-\bar{m}]\|_{L^{4}_{x}}\,\md t\\
&\leq  C\int_{0}^{T} \|\rho_{\ve}\|_{L^{2}_{x}}^{\frac{1}{2}} \|\sqrt{\rho_{\ve}}u_{\ve}\|_{L^{2}_{x}} \|\rho_{\ve}-\bar{m}\|_{L^{4}_{x}}\md t\leq C(\delta, T).\\
|\i_{4}|&=\mu \Big|\int_{0}^{T}\psi \int_{\mathcal{O}}\del_{j}u_{\ve}^{i}\del_{j}B_{i} [\rho_{\ve}-\bar{m}]\,\md x \md t\Big|
  \leq  \mu \int_{0}^{T}\|\na u_{\ve}\|_{L^{2}_{x}}\|\na \mathcal B [\rho_{\ve}-\bar{m}]\|_{L^{2}_{x}}\,\md t\\
 &\leq \mu \int_{0}^{T}\|\na u_{\ve}\|_{L^{2}_{x}}\| \rho_{\ve}-\bar{m}\|_{L^{2}_{x}}\,\md t\leq C.\\
|\i_{5}|&=\Big|\int_{0}^{T}\psi \int_{\mathcal{O}} \rho_{\ve}u_{\ve}^{i}u_{\ve}^{j}\del_{j} B_{i} [\rho_{\ve}-\bar{m}]\,\md x \md t\Big|
  \leq C\int_{0}^{T}\|\rho_{\ve}\|_{L^{3}_{x}}\|u_{\ve}\|^{2}_{L^{6}_{x}}\| \na \mathcal B [\rho_{\ve}-\bar{m}]\|_{L^{3}_{x}}\,\md t\\
&\leq C\int_{0}^{T}\|\rho_{\ve}\|_{L^{3}_{x}}\|u_{\ve}\|^{2}_{L^{6}_{x}}\|\rho_{\ve}-\bar{m}\|_{L^{3}_{x}}\,\md t\leq C.\\
|\i_{6}|&=\ve\Big|\int_{0}^{T}\psi \int_{\mathcal{O}}\rho_{\ve}u_{\ve}\cdot \mathcal{B}[\D \rho_{\ve}]\,\md x \md t\Big|
\leq \ve \int_{0}^{T}\|\rho_{\ve}\|_{L^{3}_{x}}\|u_{\ve}\|_{L^{6}_{x}}\| \mathcal B [\D \rho_{\ve}]\|_{L^{2}_{x}}\,\md t\\
&\leq C\ve \int_{0}^{T}\|\rho_{\ve}\|_{L^{3}_{x}}\|u_{\ve}\|_{L^{6}_{x}}\|\na \rho_{\ve}\|_{L^{2}_{x}}\,\md t\leq C\quad\quad \mbox{for $\ve<1$}.\\
|\i_{7}|&=\Big|\int_{0}^{T}\psi \int_{\mathcal{O}}\rho_{\ve}u_{\ve}\cdot \mathcal B [\mathrm{div}\,(\rho_{\ve}u_{\ve})]\,\md x \md t\Big|
  \leq \int_{0}^{T}\|\rho_{\ve}\|_{L^{3}_{x}}\|u_{\ve}\|_{L^{6}_{x}}\| \mathcal B [\mathrm{div}\, (\rho_{\ve}u_{\ve})]\|_{L^{2}_{x}}\,\md t\\
&\leq  \int_{0}^{T}\|\rho_{\ve}\|_{L^{3}_{x}}\|u_{\ve}\|_{L^{6}_{x}}\|\rho_{\ve}u_{\ve}\|_{L^{2}_{x}}\,\md t
  \leq C \int_{0}^{T}\|\rho_{\ve}\|_{L^{3}_{x}}^{2}\|u_{\ve}\|_{L^{6}_{x}}^{2}\,\md t\leq C.
\end{align*}
Since $\b > 4$, by using the Sobolev embedding (Lemma \ref{gn-inequality} in Appendix A), we have
\be\label{B-l-infty}
\begin{split}
\|B[\rho_{\ve}-\bar{m}]\|_{L^{\infty}_{x}}&\leq C_{1}\|\na B[\rho_{\ve}-\bar{m}]\|_{L^{\b}_{x}}^{\frac{3}{\b}}\|B[\rho_{\ve}-\bar{m}]\|_{L^{\b}_{x}}^{1-\frac{3}{\b}}+C_{2}\|B[\rho_{\ve}-\bar{m}]\|_{L^{\b}_{x}}\\
&\leq C\|\rho_{\ve}-\bar{m}\|_{L^{\b}_{x}}.
\end{split}
\en
Then we have
\begin{align*}
|\i_{8}|&=\ve \Big|\int_{0}^{T}\psi \int_{\mathcal{O}}\na u_{\ve}\cdot \mathcal B [\rho_{\ve}-\bar{m}] \na \rho_{\ve}\,\md x \md t\Big|
  \leq\ve \int_{0}^{T}\|\na u_{\ve}\|_{L^{2}_{x}}\|\na \rho_{\ve}\|_{L^{2}_{x}}\| \mathcal B [\rho_{\ve}-\bar{m}]\|_{L^{\infty}_{x}}\,\md t\\
&\leq \ve \int_{0}^{T}\|\na u_{\ve}\|_{L^{2}_{x}}\|\na \rho_{\ve}\|_{L^{2}_{x}}\|\rho_{\ve}-\bar{m}\|_{L^{\b}_{x}} \,\md t\leq C,
\end{align*}
\begin{align*}
|\i_{9}|&=\Big|\int_{0}^{T}\psi \int_{\mathcal{O}}\na \cdot (\na Q_{\ve}\otimes \na Q_{\ve}
   -\mathrm{F}(Q_{\ve})\mathrm{I}_{3})\cdot \mathcal B [\rho_{\ve}-\bar{m}]\,\md x \,\md t\Big|\\
&\leq C \int_{0}^{T}\Big(\|\na Q_{\ve}\|_{L^{\frac{10}{3}}_{x}}^{2}\|\na \mathcal B [\rho_{\ve}-\bar{m}]\|_{L^{\frac{5}{2}}_{x}}
  +|\int_{\mathcal{O}}\mathrm{F}(Q_{\ve}) \mathrm{div}\, \mathcal B [\rho_{\ve}-\bar{m}]\,\md x |\Big)\,\md t\\
&\leq C \int_{0}^{T}\Big(\|\na Q_{\ve}\|_{L^{\frac{10}{3}}_{x}}^{2}\|\na \mathcal B [\rho_{\ve}-\bar{m}]\|_{L^{\frac{5}{2}}_{x}}
    +| \int_{\mathcal{O}}\mathrm{F}(Q_{\ve})(\rho_{\ve}-\bar{m})\md x |\Big)\,\md t\\
&\leq  C\int_{0}^{T}\big(\|\na Q_{\ve}\|_{L^{\frac{10}{3}}_{x}}^{2}\|\rho_{\ve}-\bar{m}\|_{L^{\frac{5}{2}}_{x}}
     + (\|Q_{\ve}\|_{L^{5}_{x}}^{2}+\|Q_{\ve}\|_{L^{10}_{x}}^{4})\|\rho_{\ve}-\bar{m}\|_{L^{\frac{5}{3}}_{x}}\big)\md t\leq C,
\end{align*}
\begin{align*}
|\i_{10}|&=\sigma_{*} \Big|\int_{0}^{T}\psi \int_{\mathcal{O}}c_{\ve}^{2}Q_{\ve}\cdot \na \mathcal B [\rho_{\ve}-\bar{m}]\,\md x \md t\Big|\\
&\leq C\|c_{\ve}\|_{L^{\infty}_{t, x}}^{2}\int_{0}^{T}\|Q_{\ve}\|_{L^{2}_{x}}\|\na \mathcal B [\rho_{\ve}-\bar{m}]\|_{L^{2}_{x}}\,\md t \leq C,
\end{align*}
\begin{align*}
|\i_{11}|&=\Big|\int_{0}^{T}\psi \int_{\mathcal{O}}\na \cdot (Q_{\ve}\D Q_{\ve}-\D Q_{\ve}Q_{\ve})\cdot \mathcal B [\rho_{\ve}-\bar{m}]\,\md x \md t\Big|\\
&\leq C\int_{0}^{T}\|Q_{\ve}\|_{L^{4}_{x}}\|\D Q_{\ve}\|_{L^{2}_{x}}\|\na \mathcal B [\rho_{\ve}-\bar{m}]\|_{L^{4}_{x}}\,\md t\\
&\leq C\int_{0}^{T}\|Q_{\ve}\|_{L^{4}_{x}}\|\D Q_{\ve}\|_{L^{2}_{x}}\|\rho_{\ve}-\bar{m}\|_{L^{4}_{x}}\,\md t\leq C.
\end{align*}
Combining all the above estimates together, we obtain our desired result.
\end{proof}

\begin{Remark}
Lemma \ref{lemma-density} implies that $\rho_{\ve}$ has higher integrability,
which provides the weak convergence result for the pressure, {\it i.e.},
$P_{\ve}=\rho_{\ve}^{\gamma}+\dl \rho_{\ve}^{\b}\rightharpoonup p$
in $L^{\frac{\b+1}{\b}}(\mathcal{O}_{T})$.
\end{Remark}

\subsection{Limit passage of $\ve \to 0$}

In this subsection, we fix parameter $\dl$, and pass to the limit $\ve \to 0$
in equations (\ref{c-FG})$-$(\ref{Q-FG}).
To begin with, similarly to \eqref{c-n-conv}--\eqref{Q-n-conv} in \S \ref{1st-approx},
we have
\begin{align}
&c_{\ve}\rightharpoonup c\quad  \mbox{in} \,\, L^{2}(0, T; H^{1}(\mathcal{O})),
\qquad c_{\ve}\to c \quad\,\,\,\, \mbox{in}\,\, L^{2}(0, T; L^{2}(\mathcal{O})), \label{c-ve-conv}\\
&Q_{\ve}\rightharpoonup Q \quad \mbox{in} \,\, L^{2}(0, T; H^{2}(\mathcal{O})),\quad\,\,
Q_{\ve}\to Q \quad \mbox{in} \,\, L^{2}(0, T; H^{1}(\mathcal{O})). \label{Q-ve-conv}
\end{align}

From the boundedness of $\rho_{\ve}$ in $L^{\b+1}(\mathcal{O}_{T})$,
$\sqrt{\ve} \na \rho_{\ve}$ in $L^{2}(\mathcal{O}_{T})$, and $u_{\ve}$ in $L^{2}(0, T; H_{0}^{1}(\mathcal{O}))$, we know that
\begin{align}
&\rho_{\ve}\rightharpoonup \rho \qquad \mbox{ in } L^{\b+1}(\mathcal{O}_{T}), \label{rho-ve-conv}\\
&u_{\ve}\rightharpoonup u \qquad \mbox{ in } L^{2}(0, T; H_{0}^{1}(\mathcal{O})),\label{u-ve-conv}\\
&\ve \na \rho_{\ve}\cdot \na u_{\ve}\to 0 \qquad \mbox{ in } L^{1}(\mathcal{O}_{T}), \\
&\ve \D \rho_{\ve}\to 0   \qquad \mbox{ in } L^{2}(0, T; H^{-1}(\mathcal{O})).
\end{align}
Moreover, we can also obtain the following convergence results as in \S \ref{1st-approx}:
\begin{align}
&\rho_{\ve}\to \rho \qquad \mbox{ in } C([0, T]; L^{\b}_{\rm weak}(\mathcal{O})),\\
&\rho_{\ve}\to \rho \qquad \mbox{ in } C([0, T]; L^{\gamma}_{\rm weak}(\mathcal{O})),\\
&\rho_{\ve}u_{\ve}\to \rho u \qquad \mbox{ in } C([0, T]; L_{\rm weak}^{\frac{2\gamma}{\gamma +1}}(\mathcal{O})), \label{rho-u-ve-conv}\\
&\rho_{\ve}u_{\ve}\otimes u_{\ve} \to \rho u\otimes u \qquad \mbox{ in } \mathcal D^{'} (\mathcal{O}_{T}).\label{rho-u-u-conv-D}
\end{align}

Finally, we conclude that the limit vector function $(c, \rho, u, Q)$ satisfies the following equations in $\mathcal D^{'}(\mathcal{O}_{T})$:
\begin{align}
&\del_{t}c+u\cdot \na c=D_{0}c, \label{c-dl}\\
&\del_{t}\rho+\na \cdot (\rho u)=0, \label{rho-dl}\\
&\del_{t}(\rho u)+\na \cdot (\rho u\otimes u)+\na p =\mu \D u+(\nu +\mu)\na \mathrm{div}\, u+\na \cdot (\mathrm{F}(Q)\mathrm{I}_{3}-\na Q\odot \na Q) \\
&\qquad\qquad\qquad\qquad\qquad\qquad\quad  +\nabla \cdot (Q\D Q-\D Q Q)+\sigma_{*} \na \cdot (c^{2}Q), \label{u-dl}\\
&\partial_{t}Q+(u\cdot \nabla )Q+Q\Omega-\Omega Q
=\Gamma H[Q, c], \label{Q-dl}
\end{align}
along with the initial--boundary conditions (\ref{I-C-rho-a})$-$(\ref{B-C-uQ}), with
\be\label{p-dl}
P_{\ve}=\rho_{\ve}^{\gamma}+\delta \rho_{\ve}^{\b} \rightharpoonup p \qquad   \mbox{   in } L^{\frac{\b+1}{\b}}(\mathcal{O}_{T}),
\en
for $\b>\mathrm{max}\{ 4, \gamma \}$.

In the next step, we show that $p=\rho^{\gamma}+\delta \rho^{\b}$,
which is equivalent to the strong convergence of $\rho_{\ve}$ in $L^{1}(\mathcal{O}_{T})$.

\subsection{The effective viscous flux}
The quantity, $\mathfrak{E}_{\ve}:=\rho_{\ve}^{\gamma}+\delta \rho_{\ve}^{\b}-(\nu+2\mu)\mathrm{div}\, u_{\ve}$,
is usually called the effective viscous flux, and its corresponding weak convergence limit
is $\mathfrak{E}:=p-(\nu+2\mu)\mathrm{div}\, u$ . The properties of $\mathfrak{E}_{\ve}$ ({\it cf.}  \cite{H-1995,L-1998,S-1991})
play an important role in our problem.
We introduce the following operator $\mathcal{A}=(\mathcal{A}_{1}, \mathcal{A}_{2}, \mathcal{A}_{3}): \R^{3}\to \R^{3}$:
\begin{align*}
\mathcal A_{j} [v]=\del_{j}\D^{-1}v
\end{align*}
with the Fourier transform:
\begin{align*}
\mathcal F (\mathcal A_{j})(\xi)=-\frac{i \xi_{j}}{|\xi|^{2}},
\end{align*}
and enjoying the following properties:
\begin{align}
&\mathrm{div}\, \mathcal A [v]=v, \qquad \D \mathcal{A}_{i}[v]=\del_{i}v,\label{4.22}\\
&\|\mathcal A_{i} [v]\|_{W^{1, p}(\mathcal{O})}\leq C(p, \mathcal{O})\|v\|_{L^{p}(\R^{3})} \qquad\,\,\, \mbox{for $1<p<\infty$},\label{a-w1p}\\
&\|\mathcal A_{i} [v]\|_{L^{q}(\mathcal{O})}\leq C(p, q, \mathcal{O})\|v\|_{L^{p}(\R^{3})} \qquad \mbox{ if }p<\infty \mbox{ and } \frac{1}{q}\geq \frac{1}{p}-\frac{1}{3},\\
& \|\mathcal A_{i} [v]\|_{L^{\infty}(\mathcal{O})}\leq C(p, \mathcal{O})\|v\|_{L^{p}(\R^{3})} \qquad\,\,\,\,\,\, \mbox{for all $p>3$}.
\end{align}

\begin{Lemma}\label{2}
Let $(c_{\ve}, \rho_{\ve}, u_{\ve}, Q_{\ve})$ be a sequence of solutions constructed in Proposition {\rm \ref{existence-FG}},
and let $(c, \rho, u, Q, p)$ be the limits satisfying \eqref{c-dl}$-$\eqref{p-dl}.
Then
$$
\lim_{\ve\to 0^{+}}\int_{0}^{T}\psi \int_{\mathcal{O}} \phi\,\mathfrak{E}_{\ve}\rho_{\ve}\, \md x \md t
=\int_{0}^{T}\psi \int_{\mathcal{O}} \phi\,\mathfrak{E} \rho\, \md x \md t
$$
for any $\psi \in \mathcal D (0, T)$ and $\phi \in \mathcal D (\mathcal{O})$.
\end{Lemma}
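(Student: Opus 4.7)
The plan is to follow the classical Lions--Feireisl strategy: test the approximate momentum equation \eqref{u-FG} against the admissible test function $\psi(t)\phi(x)\mathcal{A}[\rho_{\ve}]$, test the limiting momentum equation \eqref{u-dl} against $\psi(t)\phi(x)\mathcal{A}[\rho]$, and subtract. Because of property \eqref{4.22}, we have $\mathrm{div}\,\mathcal{A}[\rho_\ve]=\rho_\ve$, so the pairings involving $\nabla(\rho_\ve^\gamma+\delta\rho_\ve^\beta)$ produce exactly the integrand $\rho_\ve^\gamma\rho_\ve+\delta\rho_\ve^\beta\rho_\ve$, while the Laplacian and $\nabla\mathrm{div}$ pieces of the viscous operator combine (again via \eqref{4.22} and integration by parts) to give $-(\nu+2\mu)\mathrm{div}\,u_\ve\cdot\rho_\ve$. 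After reorganizing, the identity reads
\begin{align*}
\int_0^T\!\!\int_{\mathcal O}\psi\phi\,\mathfrak E_\ve\rho_\ve\,\md x\md t
&=\sum_k \mathcal T_k^\ve,
\end{align*}
where each $\mathcal T_k^\ve$ is an integral that I must show passes to the analogous $\mathcal T_k$ from the test of \eqref{u-dl} against $\psi\phi\,\mathcal{A}[\rho]$.

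First I would dispatch the ``routine'' pieces. Using the compactness \eqref{c-ve-conv}--\eqref{Q-ve-conv}, together with the higher integrability of $\nabla Q_\ve$ in $L^{10/3}$ from \eqref{na-Q-n} and the $L^\infty$ bound on $c_\ve$, every term arising from the active stress $\sigma_*c_\ve^2Q_\ve$, the passive nematic stress $\mathrm{F}(Q_\ve)I_3-\nabla Q_\ve\odot\nabla Q_\ve$, and the antisymmetric elastic stress $Q_\ve\Delta Q_\ve-\Delta Q_\ve\,Q_\ve$ paired against $\nabla\mathcal{A}[\rho_\ve]$ or $\mathcal{A}[\rho_\ve]$ converges to its counterpart: the required continuity of $\mathcal{A}$ follows from \eqref{a-w1p}, and convergence of $\rho_\ve\to\rho$ in $C([0,T];L^\gamma_{\mathrm{weak}})$ together with compact embedding upgrades $\mathcal{A}[\rho_\ve]\to\mathcal{A}[\rho]$ strongly in $L^q_tL^p_x$ for suitable exponents. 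The artificial viscosity terms $\ve\Delta\rho_\ve$ and $\ve(\nabla\rho_\ve\cdot\nabla)u_\ve$ contribute integrals that go to zero by \eqref{na-rho-n-g-b} and $\sqrt\ve\|\nabla\rho_\ve\|_{L^2_{t,x}}\le C$. The time-derivative term is handled by the Aubin--Lions compactness of $\rho_\ve u_\ve\to\rho u$ in $C([0,T];H^{-1}(\mathcal O))$ as in \eqref{rho-u-ve-conv} combined with strong convergence of $\mathcal{A}[\rho_\ve]$.

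The main obstacle is the convective pair: I must show
\begin{align*}
\lim_{\ve\to 0}\int_0^T\!\!\int_{\mathcal O}\psi\phi\bigl(\rho_\ve u_\ve\cdot\mathcal{A}[\mathrm{div}(\rho_\ve u_\ve)]-\rho_\ve u_\ve^i u_\ve^j\partial_j\mathcal{A}_i[\rho_\ve]\bigr)\md x\md t
=\int_0^T\!\!\int_{\mathcal O}\psi\phi\bigl(\rho u\cdot\mathcal{A}[\mathrm{div}(\rho u)]-\rho u^i u^j\partial_j\mathcal{A}_i[\rho]\bigr)\md x\md t.
\end{align*}
This is precisely the place where weak convergence alone is insufficient and the classical Lions commutator identity is invoked: rewriting the integrand as $u_\ve^j\bigl(\rho_\ve\partial_j\mathcal{A}_i[u_\ve^i]-u_\ve^i\partial_j\mathcal{A}_i[\rho_\ve]\bigr)$ exposes a Riesz-type commutator in $\rho_\ve$ and $u_\ve$. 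Since $\rho_\ve\rightharpoonup\rho$ in $L^{\beta+1}$ (Lemma \ref{lemma-density}) and $u_\ve\rightharpoonup u$ in $L^2_tH^1_{0,x}\hookrightarrow L^2_tL^6_x$, the Coifman--Lions--Meyer--Semmes commutator estimate yields that this commutator lies in a compact subset of $L^p(0,T;W^{s,r}(\mathcal O))$ for some $s>0$, hence converges strongly; compensated compactness then identifies its weak-$*$ limit with the corresponding commutator in $(\rho,u)$. I plan to write this step in detail following the argument of Feireisl--Novotn\'y--Petzeltov\'a, with the one caveat that the integrability exponents must be rechecked due to the additional nematic-active source terms; the bounds \eqref{rho-n-b+1} with $\beta>4$ and the $L^\infty$ control of $c_\ve$ leave ample room. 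Once this convective limit is established, assembling all pieces yields the stated identity.
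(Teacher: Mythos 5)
Your proposal follows essentially the same route as the paper's proof: test \eqref{u-FG} against $\psi\phi\,\mathcal{A}[\rho_{\varepsilon}]$ and \eqref{u-dl} against $\psi\phi\,\mathcal{A}[\rho]$, pass the stress, pressure and time-derivative terms to the limit via $\mathcal{A}[\rho_{\varepsilon}]\to\mathcal{A}[\rho]$ in $C(\overline{\mathcal{O}_{T}})$ together with the $c$- and $Q$-compactness, discard the $O(\sqrt{\varepsilon})$ artificial-viscosity terms, and handle the convective pair by the Feireisl--Novotn\'y--Petzeltov\'a commutator (div-curl) lemma, exactly as the paper does. The one slip is your displayed rewriting of the commutator: the relevant quantity is $\rho_{\varepsilon}\partial_{j}\mathcal{A}_{i}[\rho_{\varepsilon}u_{\varepsilon}^{j}]-\rho_{\varepsilon}u_{\varepsilon}^{j}\partial_{j}\mathcal{A}_{i}[\rho_{\varepsilon}]$, i.e.\ the commutator of $\rho_{\varepsilon}$ with the momentum $\rho_{\varepsilon}u_{\varepsilon}^{j}$ (the pair whose space--time divergence is controlled by the continuity equation), not $u_{\varepsilon}^{j}\bigl(\rho_{\varepsilon}\partial_{j}\mathcal{A}_{i}[u_{\varepsilon}^{i}]-u_{\varepsilon}^{i}\partial_{j}\mathcal{A}_{i}[\rho_{\varepsilon}]\bigr)$; once corrected, the bound in $L^{\infty}_{t}L^{\alpha}_{x}$ with $\alpha^{-1}=\frac{\gamma+1}{2\gamma}+\frac{1}{\beta}<\frac{5}{6}$ and the compact embedding into $H^{-1}(\mathcal{O})$ reproduce the paper's argument.
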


\begin{proof}
We prove this lemma based on the div-curl lemma of compensated compactness.
By Proposition \ref{existence-FG},
we know that $(\rho_{\ve}, u_{\ve})$ satisfies (\ref{rho-FG}) {\it a.e.} on $\mathcal{O}_{T}$
with boundary condition \eqref{B-C-rho}. If $(\rho_{\ve}, u_{\ve})$ are extended to
be zero outside $\mathcal{O}$, then it satisfies
\begin{align*}
\del_{t}\rho_{\ve}+\na \cdot  (\rho_{\ve}u_{\ve})=\ve \mathrm{div}\,(\mathrm{1}_{\mathcal{O}}\na \rho_{\ve})\qquad
\mbox{ in } \mathcal{D}^{'} ((0, T)\times \R^{3}),
\end{align*}
with $\mathrm{1}_{\mathcal{O}}$ the characteristic function on $\mathcal{O}$.
Consider the following test function to (\ref{u-FG}) as
\be\label{test-f-ve-a}
\va_{\ve}(t, x)=\psi(t)\phi(x)\mathcal A [\rho_{\ve}],
\en
with $\psi \in \mathcal D (0, T)$ and $\phi \in \mathcal D(\mathcal{O})$.
Similarly to (\ref{rho-ve-higher-integrability}),
we apply the test function \eqref{test-f-ve-a} to  (\ref{u-FG}).
By a direct calculation, we have
\begin{align}
&\int_{0}^{T}\psi \int_{\mathcal{O}}\phi \big(\rho_{\ve}^{\gamma +1}+\delta \rho_{\ve}^{\b +1}-(\nu +2\mu)\mathrm{div}\, u_{\ve}\big)\rho_{\ve}\,\md x \md t\nonumber\\
&=-\int_{0}^{T}\psi \int_{\mathcal{O}}(\rho_{\ve}^{\gamma}+\delta \rho_{\ve}^{\b})\del_{i}\phi \,\mathcal A_{i} [\rho_{\ve}]\,\md x \md t
 +(\mu +\nu)\int_{0}^{T}\psi \int_{\mathcal{O}}\mathrm{div}\, u_{\ve}\,\del_{i}\phi \mathcal A_{i} [\rho_{\ve}]\,\md x \md t\nonumber \\
& \quad+\mu \int_{0}^{T}\psi \int_{\mathcal{O}}\del_{j}u_{\ve}^{i}\del_{j}\phi \mathcal A_{i} [\rho_{\ve}]\,\md x \md t
  -\mu \int_{0}^{T}\psi \int_{\mathcal{O}}u_{\ve}^{i}\del_{j}\phi \del_{j}\mathcal A_{i} [\rho_{\ve}]\,\md x \md t\nonumber\\
&\quad +\mu \int_{0}^{T}\psi \int_{\mathcal{O}}u_{\ve}^{i}\del_{i}\phi \rho_{\ve}\, \md x \md t
  -\ve \int_{0}^{T}\psi \int_{\mathcal{O}}\phi \rho_{\ve}u_{\ve}^{i} \mathcal{A}_{i}\, \mathrm{div}\,(\mathrm{1}_{\O}\na \rho_{\ve})\,\md x \md t\nonumber \\
&\quad + \int_{0}^{T}\psi \int_{\mathcal{O}}\phi \rho_{\ve}u_{\ve}^{i}\big(\mathcal A_{i} [\mathrm{div}(\rho_{\ve}u_{\ve})]-u_{\ve}^{j}\del_{j}\mathcal A_{i} [\rho_{\ve}]\big)\, \md x \md t\nonumber \\
&\quad -\int_{0}^{T}\psi_{t} \int_{\mathcal{O}}\phi \rho_{\ve}u^{i}_{\ve}\mathcal A_{i} [\rho_{\ve}]\,\md x \md t
  -\int_{0}^{T}\psi \int_{\mathcal{O}} \rho_{\ve}u_{\ve}^{i}u_{\ve}^{j}\del_{j}\phi \mathcal A_{i} [\rho_{\ve}]\,\md x \md t\nonumber \\
&\quad +\ve \int_{0}^{T}\psi \int_{\mathcal{O}}\del_{j} u_{\ve}^{i}\del_{j}\rho_{\ve}\phi \mathcal A_{i} [\rho_{\ve}] \,\md x \md t
  +\sigma_{*} \int_{0}^{T}\psi \int_{\mathcal{O}}c_{\ve}^{2}Q^{ij}_{\ve} \big( \del_{j}\phi \mathcal A_{i} [\rho_{\ve}]+\phi \del_{j}\mathcal A_{i}[\rho_{\ve}]\big)\,\md x \md t\nonumber\\
&\quad -\int_{0}^{T}\psi \int_{\mathcal{O}}\big(\del_{i} Q^{kl}_{\ve}\del_{j} Q^{kl}_{\ve}-\mathrm{F}(Q_{\ve})\dl_{ij}\big) \del_{j}\phi \mathcal A_{i} [\rho_{\ve}]\,\md x \md t\nonumber\\
&\quad -\int_{0}^{T}\psi \int_{\mathcal{O}}\big(\del_{i} Q^{kl}_{\ve}\del_{j} Q^{kl}_{\ve}-\mathrm{F}(Q_{\ve})\dl_{ij}\big)\phi \del_{j}\mathcal A_{i}[\rho_{\ve}]\,\md x \md t\nonumber\\
&\quad +\int_{0}^{T}\psi \int_{\mathcal{O}}\big(Q^{ik}_{\ve}\D Q^{kj}_{\ve}
  -\D Q^{ik}_{\ve}Q^{kj}_{\ve}\big)\big( \del_{j}\phi \mathcal A_{i} [\rho_{\ve}]+\phi \del_{j}\mathcal A_{i}[\rho_{\ve}]\big)\,\md x \md t\nonumber\\
&=\sum_{i=1}^{14}\i_{i}.\label{rho-ve-higher-integrability-calculation}
\end{align}

\begin{Remark}
The function $\rho_{\ve}$
extended by zero outside $\mathcal{O}$ admits the time derivative as
\begin{equation*}
\del_{t}\rho_{\ve}
=\begin{cases}
\ve \D \rho_{\ve}-\mathrm{div}(\rho_{\ve}u_{\ve}) \quad &\mbox{in}\,\, \mathcal{O},\\
0 &\mbox{in} \,\, \R^{3}\setminus \mathcal{O}.
\end{cases}
\end{equation*}
Since $u_{\ve}|_{\del \mathcal{O}}=0$, we have
\begin{align*}
\mathrm{div}(\rho_{\ve}u_{\ve})=0 \qquad \mbox{on} \,\, \R^{3}\setminus \mathcal{O}.
\end{align*}
Moreover, since $\na \rho_{\ve}\cdot \vec{n}|_{\del \mathcal{O}}=0$ and $\D \rho_{\ve}\in L^{r}(\mathcal{O}_{T})$ for some $r>1$, we have
\begin{align*}
\mathrm{div}(1_{\mathcal{O}}\na \rho_{\ve})=
\begin{cases}
\D \rho_{\ve} \quad &\mbox{in} \,\, \mathcal{O},\\
0 &\mbox{in} \,\, \R^{3}\setminus \mathcal{O}.
\end{cases}
\end{align*}
\end{Remark}

Next, we do the zero extension to the limit function $\rho$ to $\R^{3}$,
and repeat the same procedure above. This step is guaranteed by the following results from \cite{F-N-P-2001}:

\begin{Lemma}\label{1}
Assume $(\rho, u)\in L^{2}(\mathcal{O}_{T})\times L^{2}(0, T; H^{1}_{0}(\mathcal{O}))$
is a solution of \eqref{rho-dl} in $\mathcal D^{'} (\mathcal{O}_{T})$.
Then, extending $(\rho, u)$ to be zero in $\R^{3}\setminus \mathcal{O}$,
equation \eqref{rho-dl} still holds in $\mathcal D^{'} ((0, T)\times \R^{3})$.
\end{Lemma}

Here, let us apply the test function $\va=\psi(t)\phi(x)\mathcal A [\rho]$ to (\ref{u-dl}).
By a similar calculation as before, we have
\begin{align}
&\int_{0}^{T}\psi \int_{\mathcal{O}}\phi \big(p-(\nu +2\mu)\mathrm{div}\, u\big)\rho\, \md x \md t\nonumber\\
&=-\int_{0}^{T}\psi \int_{\mathcal{O}}p\del_{i}\phi \mathcal A_{i} [\rho]\,\md x \md t
  +(\mu +\nu)\int_{0}^{T}\psi \int_{\mathcal{O}}\mathrm{div}\, u\del_{i}\phi \mathcal A_{i} [\rho]\,\md x \md t\nonumber \\
& \quad +\mu \int_{0}^{T}\psi \int_{\mathcal{O}}\del_{j}u^{i}\del_{j}\phi \mathcal A_{i} [\rho]\,\md x \md t
  -\mu \int_{0}^{T}\psi \int_{\mathcal{O}}u^{i}\del_{j}\phi \del_{j}\mathcal A_{i} [\rho]\,\md x \md t\nonumber\\
&\quad +\mu \int_{0}^{T}\psi \int_{\mathcal{O}}u^{i}\del_{i}\phi \rho \,\md x \md t
  + \int_{0}^{T}\psi \int_{\mathcal{O}}\phi \rho u^{i}\big(\mathcal A_{i} [\mathrm{div}(\rho u)]-u^{j}\del_{j}\mathcal A_{i} [\rho]\big)\,\md x \md t\nonumber \\
&\quad -\int_{0}^{T}\psi_{t} \int_{\mathcal{O}}\phi \rho u^{i} \mathcal A_{i} [\rho]\,\md x \md t
  -\int_{0}^{T}\psi \int_{\mathcal{O}} \rho u^{i}u^{j}\del_{j}\phi \mathcal A_{i} [\rho]\,\md x \md t \nonumber\\
& \quad +\sigma_{*} \int_{0}^{T}\psi \int_{\mathcal{O}}c^{2}Q^{ij} \big(\del_{j}\phi \mathcal A_{i} [\rho]+\phi \del_{j}\mathcal A_{i}[\rho]\big)\,\md x \md t\nonumber\\
&\quad -\int_{0}^{T}\psi \int_{\mathcal{O}}\big(\del_{i} Q^{kl}\del_{j} Q^{kl}-\mathrm{F}(Q)\dl_{ij}\big) \del_{j}\phi \mathcal A_{i} [\rho]\,\md x \md t\nonumber\\
&\quad -\int_{0}^{T}\psi \int_{\mathcal{O}}\big(\del_{i} Q^{kl}\del_{j} Q^{kl}-\mathrm{F}(Q)\dl_{ij}\big)\phi \del_{j}\mathcal A_{i}[\rho]\,\md x \md t\nonumber\\
&\quad +\int_{0}^{T}\psi \int_{\mathcal{O}}\big(Q^{ik} \D Q^{kj}-\D Q^{ik}Q^{kj}\big)\big( \del_{j}\phi \mathcal A_{i} [\rho]+\phi \del_{j}\mathcal A_{i}[\rho]\big)\,\md x \md t\nonumber\\
&=\sum_{j=1}^{12}\j_{i}. \label{rho-higher-integrability-calculation}
\end{align}
Next, in order to prove Lemma \ref{2}, we need to show that the right-hand side
of (\ref{rho-ve-higher-integrability-calculation}) converges to the right-hand
side of (\ref{rho-higher-integrability-calculation}).
First, by the uniform bound of $\rho_{\ve}$ in $L^{\infty}(0, T; L^{\b}(\mathcal{O}))$, similarly to \eqref{rho-u-n-conv-strong}, we have
$$
\rho_{\ve}\to \rho \qquad\;  \mbox{in} \; C([0, T]; L^{\b}_{\rm weak}(\mathcal{O}))\; \;\mbox{as} \; \ve\to 0.
$$
Then, for any $\phi \in (L^{\b}(\mathcal{O}))^{*}$, we extend
$\phi$ by zero outside of $\mathcal{O}$ to obtain
\begin{align*}
&(\del_{j}\mathcal{A}_{i}[\rho_{\ve}], \phi)_{L^{\b}(\mathcal{O})\times (L^{\b}(\mathcal{O}))^{*}}
=\int_{\R^{3}}\del_{j}\mathcal{A}_{i}[\rho_{\ve}] \phi \,\md x
=\int_{\R^{3}}\frac{\hat{\rho_{\ve}}\xi_{i}\xi_{j}}{|\xi |^{2}}\hat{\phi}\,\md \xi \\
&=\int_{\R^{3}}\rho_{\ve}\del_{j}\mathcal{A}_{i}[\phi]\, \md x
=(\rho_{\ve}, \del_{j}\mathcal{A}_{i}[\phi])_{L^{\b}(\mathcal{O})\times (L^{\b}(\mathcal{O}))^{*}}\\[2mm]
&\,\longrightarrow (\rho, \del_{j}\mathcal{A}_{i}[\phi])_{L^{\b}(\mathcal{O})\times (L^{\b}(\mathcal{O}))^{*}}
=(\del_{j}\mathcal{A}_{i}[\rho], \phi)_{L^{\b}(\mathcal{O})\times (L^{\b}(\mathcal{O}))^{*}} \qquad\mbox{as $\ve\to 0$},
\end{align*}
from which we infer
$$
\mathcal A [\rho_{\ve}]\to \mathcal A [\rho] \qquad\; \mbox{ in } C([0, T], W^{1, \b}_{\rm weak} (\mathcal{O})).
$$
This, combining with Proposition \ref{conv-h-1} ($\b>\frac{6}{5}$) and
the compact imbedding $W^{1, \b}(\mathcal{O})\Subset C(\mathcal{\bar{O}})$, gives
\begin{align}
&\na \mathcal A [\rho_{\ve}] \to \na \mathcal A [\rho]\qquad\,\,  \mbox{ in } C([0, T]; H^{-1}(\mathcal{O})),\label{na-a-rho-ve-conv-h-1}\\
&\mathcal A [\rho_{\ve}]\to \mathcal A [\rho] \qquad\qquad \mbox{ in } C(\overline{\mathcal{O}_{T}}). \label{a-rho-conv-Ctx}
\end{align}
From \eqref{u-ve-conv} and (\ref{a-rho-conv-Ctx}), we see that, as $\ve \to 0$,
\begin{align*}
\i_{2}\to \j_{2}, \qquad \i_{3}\to \j_{3}.
\end{align*}
By (\ref{rho-u-ve-conv}) and (\ref{a-rho-conv-Ctx}), we find that, as $\ve \to 0$,
$$
\i_{8}\to \j_{7}.
$$
From (\ref{u-ve-conv}) and (\ref{rho-u-ve-conv}), we have
$$
\|\rho_{\ve}u_{\ve}\otimes u_{\ve}\|_{L^{\frac{6\gamma}{3+4\gamma}}_{x}}\leq C\|\rho_{\ve}u_{\ve}\|_{L^{\frac{2\gamma}{\gamma +1}}_{x}}\|u_{\ve}\|_{L^{6}_{x}},
$$
which, combined with (\ref{rho-u-u-conv-D}), gives
$$
\rho_{\ve}u_{\ve}\otimes u_{\ve} \rightharpoonup \rho u\otimes u
\qquad \mbox{ in } L^{2}(0, T; L^{\frac{6\gamma}{3+4\gamma}}(\mathcal{O})).
$$
Thus, together with (\ref{a-rho-conv-Ctx}), we see that, if $\b > \frac{6\gamma}{2\gamma -3}$,
\begin{align*}
\i_{9}\to \j_{8} \qquad \mbox{as $\ve \to 0$}.
\end{align*}
Combining (\ref{u-ve-conv}) with (\ref{na-a-rho-ve-conv-h-1}), we obtain that, as $\ve \to 0$,
\begin{align*}
\i_{4}\to \j_{4}.
\end{align*}
In the same way as above, since $\b >4$, we know
\begin{align*}
\i_{5}\to \j_{5}.
\end{align*}
It follows from the boundedness of $\rho_{\ve}$ in $L^{\b}(\mathcal{O})$, $\rho_{\ve}u_{\ve}$ in $L^{\frac{2\gamma}{\gamma +1}}(\mathcal{O})$, and (\ref{a-w1p}) that
\begin{align*}
&\|\rho_{\ve}\del_{j}\mathcal A_{i} [\rho_{\ve}u_{\ve}^{j}]-\rho_{\ve}u_{\ve}^{j}\del_{j}\mathcal A_{i} [\rho_{\ve}]\|_{L^{\a}_{x}}\\
&\leq \|\rho_{\ve}\|_{L^{\b}_{x}} \|\del_{j}\mathcal A_{i} [\rho_{\ve}u_{\ve}^{j}]\|_{L^{\frac{2\gamma}{\gamma +1}}_{x}}
   +\|\rho_{\ve}u_{\ve}\|_{L^{\frac{2\gamma}{\gamma +1}}_{x}} \|\del_{j}\mathcal A_{i} [\rho_{\ve}]\|_{L^{\b}_{x}} \\
&\leq \|\rho_{\ve}\|_{L^{\b}_{x}} \|\rho_{\ve}u_{\ve}\|_{L^{\frac{2\gamma}{\gamma +1}}_{x}}\leq C,
\end{align*}
where $\frac{1}{\a}=\frac{\gamma +1}{2\gamma}+\frac{1}{\b}< \frac{5}{6}$, if $\b>\frac{6\gamma}{2\gamma -3}$.
Then we conclude
\begin{align*}
\rho_{\ve}\del_{j}\mathcal A_{i} [\rho_{\ve}u_{\ve}^{j}]-\rho_{\ve}u_{\ve}^{j}\del_{j}\mathcal A_{i} [\rho_{\ve}]\in L^{\infty}(0, T; L^{\a}(\mathcal{O})).
\end{align*}
From Lemma 3.4 in \cite{F-N-P-2001} and the compact embedding of $L^{\a}(\mathcal{O})$ in $H^{-1}(\mathcal{O})$, we infer that
\begin{align*}
\rho_{\ve}\del_{j}\mathcal A_{i} [\rho_{\ve}u_{\ve}^{j}]-\rho_{\ve}u_{\ve}^{j}\del_{j}\mathcal A_{i} [\rho_{\ve}]
\to \rho \del_{j}\mathcal A_{i} [\rho u^{j}]-\rho u^{j}\del_{j}\mathcal A_{i} [\rho]\qquad  \mbox{ in } H^{-1}(\mathcal{O}).
\end{align*}
Then, after applying Lebesgue convergence theorem, we obtain
\begin{align*}
\rho_{\ve}\del_{j}\mathcal A_{i} [\rho_{\ve}u_{\ve}^{j}]-\rho_{\ve}u_{\ve}^{j}\del_{j}\mathcal A_{i} [\rho_{\ve}]
\to \rho \del_{j}\mathcal A_{i} [\rho u^{j}]-\rho u^{j}\del_{j}\mathcal A_{i} [\rho]\qquad \mbox{ in } L^{2}(0, T; H^{-1}(\mathcal{O})),
\end{align*}
which, combined with (\ref{u-ve-conv}), yields
\begin{align*}
\i_{7}\to \j_{6} \qquad\, \mbox{as $\ve \to 0$}.
\end{align*}
Moreover, we have
\begin{align*}
|\i_{6}|&=\ve \Big|\int_{0}^{T}\psi \int_{\mathcal{O}}\phi \rho_{\ve}u_{\ve}^{i} \mathcal{A}_{i}[\mathrm{div}(\mathrm{1}_{\O}\na \rho_{\ve})]\,\md x \md t\Big|\\
&\leq C\ve \int_{0}^{T}\|\rho_{\ve}\|_{L^{3}_{x}} \|u_{\ve}\|_{L^{6}_{x}}\|\mathcal A_{i} [\mathrm{div} (\mathrm{1}_{\O}\na \rho_{\ve})] \|_{L^{2}_{x}}\md t\\
&\leq C\ve \int_{0}^{T}\|\rho_{\ve}\|_{L^{3}_{x}} \|u_{\ve}\|_{L^{6}_{x}}\|\na \rho_{\ve} \|_{L^{2}_{x}}\md t\\
&\leq C\ve \|\rho_{\ve}\|_{L^{\infty}_{t} L^{3}_{x}} \|u_{\ve}\|_{L^{2}_{t}L^{6}_{x}}\|\na \rho_{\ve} \|_{L^{2}_{t} L^{2}_{x}}
\leq C\sqrt{\ve} \to 0\;\qquad  \mbox{as}\; \ve \to 0.
\end{align*}
Since $\b >3$, we have
\begin{align*}
|\i_{10}|&=\ve\Big|\int_{0}^{T}\psi \int_{\mathcal{O}}\del_{j} u_{\ve}^{i}\del_{j}\rho_{\ve}\phi \mathcal A_{i} [\rho_{\ve}] \,\md x \md t\Big|\\
&\leq C\ve \int_{0}^{T}\|\na u_{\ve}\|_{L^{2}_{x}} \|\na \rho_{\ve}\|_{L^{2}_{x}}\|\mathcal A_{i} [\rho_{\ve}] \|_{L^{\infty}_{x}}\,\md t\\
&\leq C\ve \int_{0}^{T}\|\na u_{\ve}\|_{L^{2}_{x}} \|\na \rho_{\ve}\|_{L^{2}_{x}}\|\rho_{\ve} \|_{L^{\b}_{x}}\,\md t\\
&\leq C\ve \|\na u_{\ve}\|_{L^{2}_{t} L^{2}_{x}} \|\na \rho_{\ve}\|_{L^{2}_{t} L^{2}_{x}}\|\rho_{\ve} \|_{L^{\infty}_{t} L^{\b}_{x}}
\leq C\sqrt{\ve}\to 0\qquad \mbox{as}\; \ve \to 0.
\end{align*}
From \eqref{c-ve-conv}--(\ref{Q-ve-conv}) and  (\ref{na-a-rho-ve-conv-h-1})--(\ref{a-rho-conv-Ctx}),
we have
\begin{align*}
\i_{11}=\sigma_{*} \int_{0}^{T}\psi \int_{\mathcal{O}}c_{\ve}^{2}Q^{ij}_{\ve}
  \big(\del_{j}\phi \mathcal A_{i} [\rho_{\ve}]+\phi \del_{j}\mathcal A_{i}[\rho_{\ve}]\big)\,\md x \md t \to \j_{9}.
\end{align*}
From (\ref{Q-ve-conv}), (\ref{a-rho-conv-Ctx}), and the boundedness of $Q_{\ve}$ in $L^{2}(0, T; H^{2}(\mathcal{O}))$,
we have
\begin{align*}
\i_{12}&=\int_{0}^{T}\psi \int_{\mathcal{O}}\big(\del_{i} Q^{kl}_{\ve} \del_{j} Q^{kl}_{\ve}-\mathrm{F}(Q_{\ve})\dl_{ij}\big)\del_{j}\phi \mathcal A_{i} [\rho_{\ve}]\,\md x \md t
\to \j_{10},
\end{align*}
\begin{align*}
\i_{13}&=\int_{0}^{T}\psi \int_{\mathcal{O}}\big(\del_{i} Q^{kl}_{\ve} \del_{j} Q^{kl}_{\ve}-\mathrm{F}(Q_{\ve})\dl_{ij}\big)\phi \del_{j}\mathcal A_{i}[\rho_{\ve}]\,\md x \md t
\to \j_{11},
\end{align*}
and
\begin{align*}
\i_{14}&=-\int_{0}^{T}\psi \int_{\mathcal{O}}\big(Q^{ik}_{\ve}\D Q^{kj}_{\ve}-\D Q^{ik}_{\ve}Q^{kj}_{\ve}\big)
  \big( \del_{j}\phi \mathcal A_{i} [\rho_{\ve}]+\phi \del_{j}\mathcal A_{i}[\rho_{\ve}]\big)\,\md x \md t
\to \j_{12}.
\end{align*}

Next, following the same argument as in Subsection 3.5 in \cite{F-2001}, we obtain the strong convergence of the density sequence $\rho_{\ve}$
in $L^{1}(\mathcal{O}_{T})$, {\it i.e.}
\be\label{p}
p=\rho^{\gamma}+\delta \rho^{\b}.
\en
\end{proof}

Then we have the following proposition:
\begin{Proposition}\label{existence-dl}
Let $\b >\max \{\frac{6\gamma}{2\gamma -3}, \gamma, 4\}$.
Then, for any given $T>0$ and $\delta >0$, there exists a finite-energy weak solution $(c, \rho, u, Q)$ of the problem{\rm :}
\begin{align}
&\del_{t}c+u\cdot \na c=D_{0}\D c, \label{c-dl-1}\\
&\del_{t}\rho+\mathrm{div}(\rho u)=0, \label{rho-dl-1}\\
&\del_{t}(\rho u)+\na \cdot (\rho u\otimes u)+\na (\rho^{\gamma}+\delta \rho^{\b})
  =\mu \D u+(\nu +\mu)\na \mathrm{div}\, u +\nabla \cdot (Q\D Q-\D Q Q)\nonumber\\
& \qquad\qquad\qquad\qquad\qquad\qquad\qquad\quad\quad\,\,\, +\na \cdot (\mathrm{F}(Q)\mathrm{I}_{3}-\na Q\odot \na Q)+\sigma_{*} \na \cdot (c^{2}Q), \label{u-dl-1}\\
&\partial_{t}Q+(u\cdot \nabla )Q+Q\Omega-\Omega Q
=\Gamma H[Q, c], \label{Q-dl-1}
\end{align}
with initial-boundary conditions \eqref{I-C-c}$-$\eqref{B-C-uQ}.
Moreover, equation \eqref{rho-dl-1} holds in the sense of renormalized solutions
on $\mathcal D^{'} ((0, T)\times \R^{3})$, provided that  $(\rho, u)$ are extended
to be zero on $\R^{3}\setminus \mathcal{O}$.
In addition, the following estimates are valid{\rm :}
\begin{align}
&0<\underline{c}\leq c(x, t)\leq \bar{c}, \quad \|c\|_{L^{\infty}(0, T; L^{2}(\mathcal{O}))\cap L^{2}(0, T; H^{1}(\mathcal{O}))}\leq C, \label{c-dl-gamma}\\
&\sup_{t\in [0, T]}\|\rho(t, \cdot)\|_{L^{\gamma}(\mathcal{O})}^{\gamma}\leq C,\label{rho-dl-gamma}\\
&\delta \sup_{t\in [0, T]}\|\rho(t, \cdot)\|_{L^{\b}(\mathcal{O})}^{\b}\leq C,\label{rho-dl-b}\\
&\sup_{t\in [0, T]}\|\sqrt{\rho}(t, \cdot)u(t, \cdot)\|_{L^{2}(\mathcal{O})}^{2}\leq C,\label{rho-u-2}\\[1mm]
&\|u\|_{L^{2}(0, T; H^{1}_{0}(\mathcal{O}))}\leq C,\label{u-dl-h1}\\[1mm]
&\|Q\|_{L^{\infty}(0, T; H^{1}(\mathcal{O})\cap L^{4}(\mathcal{O}))\cap L^{2}(0, T; H^{2}(\mathcal{O})\cap L^{6}(\mathcal{O}))}\leq C,\\[1mm]
&\|Q\|_{L^{10}(\mathcal{O}_{T})}\leq C,\label{Q-dl-10}\\[1mm]
&\|\na Q\|_{L^{\frac{10}{3}}(\mathcal{O}_{T})}\leq C,\label{na-Q-dl}
\end{align}
where $C$ is a constant independent of $\ve$.
\end{Proposition}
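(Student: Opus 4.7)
The plan is to pass to the limit $\varepsilon\to 0^{+}$ in the approximate sequence $(c_{\varepsilon},\rho_{\varepsilon},u_{\varepsilon},Q_{\varepsilon})$ produced by Proposition \ref{existence-FG}, using the uniform bounds from Lemma \ref{FG-n-estimates} together with the $\varepsilon$-independent higher integrability of the density from Lemma \ref{lemma-density}. First I would extract weak/weak-$*$ limits: $c_{\varepsilon}\rightharpoonup c$, $Q_{\varepsilon}\rightharpoonup Q$, $u_{\varepsilon}\rightharpoonup u$, $\rho_{\varepsilon}\rightharpoonup \rho$ in the spaces dictated by the uniform bounds, and deduce (via the maximum principle for $c$) that $\underline{c}\le c\le \bar c$. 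The vanishing viscosity terms $\varepsilon\Delta\rho_{\varepsilon}$ and $\varepsilon\nabla\rho_{\varepsilon}\cdot\nabla u_{\varepsilon}$ go to zero in distribution thanks to $\sqrt{\varepsilon}\|\nabla\rho_{\varepsilon}\|_{L^{2}_{t,x}}\le C$ and $\|u_{\varepsilon}\|_{L^{2}_{t}H^{1}_{x}}\le C$. The limit equations for $c$ and $Q$ then follow after standard Aubin--Lions compactness, exactly as in \eqref{c-n-conv}--\eqref{Q-n-conv}, giving $c_{\varepsilon}\to c$ in $L^{2}_{t}L^{2}_{x}$ and $Q_{\varepsilon}\to Q$ in $L^{2}_{t}H^{1}_{x}$; this strong convergence is what lets me pass to the limit in the quadratic/cubic $Q$-nonlinearities and in the active coupling $c_{\varepsilon}^{2}Q_{\varepsilon}$.

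For the momentum equation, the bounds $\rho_{\varepsilon}\in L^{\infty}_{t}L^{\gamma}_{x}$ and $\sqrt{\rho_{\varepsilon}}u_{\varepsilon}\in L^{\infty}_{t}L^{2}_{x}$ give $\rho_{\varepsilon}u_{\varepsilon}\in L^{\infty}_{t}L^{2\gamma/(\gamma+1)}_{x}$; applying the equi-continuity argument together with Corollary \ref{suff-cond-C-Weak-star} and Proposition \ref{conv-h-1}, as in Section \ref{1st-approx}, yields $\rho_{\varepsilon}u_{\varepsilon}\to \rho u$ in $C([0,T];L^{2\gamma/(\gamma+1)}_{\mathrm{weak}}(\mathcal{O}))\hookrightarrow C([0,T];H^{-1}(\mathcal{O}))$, which combined with $u_{\varepsilon}\rightharpoonup u$ in $L^{2}_{t}H^{1}_{x}$ gives $\rho_{\varepsilon}u_{\varepsilon}\otimes u_{\varepsilon}\to \rho u\otimes u$ in $\mathcal{D}'(\mathcal{O}_{T})$. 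Combined with the pressure weak limit \eqref{p-dl} this produces the limiting momentum equation with $\nabla p$ in place of $\nabla(\rho^{\gamma}+\delta\rho^{\beta})$.

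The hard part, and the whole reason for Lemma \ref{2}, is identifying $p=\rho^{\gamma}+\delta\rho^{\beta}$, which is equivalent to the strong $L^{1}(\mathcal{O}_{T})$ compactness of $\rho_{\varepsilon}$. For this I would follow Lions--Feireisl: the effective viscous flux identity of Lemma \ref{2} says that $\overline{(\rho^{\gamma}+\delta\rho^{\beta})\rho}-(\nu+2\mu)\overline{\rho\,\mathrm{div}\,u}=p\rho-(\nu+2\mu)\rho\,\mathrm{div}\,u$ in the sense of weak limits. Since both $\rho_{\varepsilon}$ and the limit $\rho$ satisfy the renormalized continuity equation (the former trivially due to the parabolic regularization, the latter by DiPerna--Lions because $\rho\in L^{2}(\mathcal{O}_{T})$ and $u\in L^{2}_{t}H^{1}_{x}$), one compares $b(\rho_{\varepsilon})$ with $b(\rho)$ for a suitable convex $b$ (typically $b(z)=z\ln z$ truncated), integrates in time, and uses the monotonicity of the pressure together with the effective viscous flux identity to conclude $\overline{\rho\ln\rho}=\rho\ln\rho$ a.e., hence $\rho_{\varepsilon}\to\rho$ in $L^{1}(\mathcal{O}_{T})$. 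The monotonicity of $z\mapsto z^{\gamma}+\delta z^{\beta}$ then gives \eqref{p}, and strong convergence promotes \eqref{rho-ve-conv} to $\rho_{\varepsilon}\to\rho$ in $L^{p}$ for any $p<\beta+1$, so the limit satisfies \eqref{u-dl-1}.

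Finally, the renormalized form of \eqref{rho-dl-1} on $\mathcal{D}'((0,T)\times\mathbb{R}^{3})$ follows from Lemma \ref{1} (zero extension) together with the DiPerna--Lions regularization of the transport equation, valid because $\rho\in L^{\infty}_{t}L^{\gamma}_{x}\cap L^{\beta+1}_{t,x}$ and $u\in L^{2}_{t}H^{1}_{0,x}$. The uniform estimates \eqref{c-dl-gamma}--\eqref{na-Q-dl} are inherited directly from Lemma \ref{FG-n-estimates} and Lemma \ref{lemma-density} by lower semicontinuity under weak convergence, and the energy inequality passes to the limit in the same way. The main obstacle throughout is the density compactness via the effective viscous flux, where the new difficulties compared with \cite{F-N-P-2001} come from the extra $Q$- and $c$-dependent source terms in the momentum equation; these are controlled by the bounds $\|Q\|_{L^{10}(\mathcal{O}_{T})}$, $\|\nabla Q\|_{L^{10/3}(\mathcal{O}_{T})}$, and $\|c\|_{L^{\infty}}\le\bar c$ combined with the regularity of the operator $\mathcal{A}$ exactly as exploited in the $\mathcal{I}_{11}$--$\mathcal{I}_{14}$ estimates of \eqref{rho-ve-higher-integrability-calculation}.
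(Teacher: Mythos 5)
Your proposal follows the same route as the paper: uniform bounds from Proposition \ref{existence-FG} and Lemma \ref{lemma-density}, Aubin--Lions compactness for $c_\varepsilon$ and $Q_\varepsilon$, the $C([0,T];L^{2\gamma/(\gamma+1)}_{\rm weak})$ argument for $\rho_\varepsilon u_\varepsilon$, vanishing of the artificial viscosity terms, and then the effective viscous flux identity of Lemma \ref{2} combined with the renormalized continuity equation and the monotonicity/convexity argument (as in Feireisl, Subsection 3.5 of \cite{F-2001}) to identify $p=\rho^\gamma+\delta\rho^\beta$. This matches the paper's proof in both structure and the key ingredients, so no further comment is needed.
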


\begin{Remark}
The initial conditions (\ref{I-C-rho-a})$-$(\ref{I-C-Q}) are satisfied in the weak sense,
since
$$
\rho_{\ve}\to \rho \,\,\,\, \mbox{in}\,\, C([0, T]; L^{\b}_{\rm weak}(\mathcal{O})), \qquad
\rho_{\ve}u_{\ve}\to \rho u \,\,\,\, \mbox{in}\,\, C([0, T]; L^{\frac{2\gamma}{\gamma +1}}_{\rm weak}(\mathcal{O}))
$$
from (\ref{rho-ve-conv})--(\ref{u-ve-conv}).
\end{Remark}

\section{Passing to the Limit in the Artificial Pressure}

We denote by $(c_{\dl}, \rho_{\dl}, u_{\dl}, Q_{\dl})$ the approximate solutions constructed in Proposition \ref{existence-dl}. In this section, we
let $\delta \to 0$ in (\ref{c-dl-1})$-$\eqref{Q-dl-1} to obtain the solution of the original problem \eqref{c}$-$\eqref{Q}.

In order for solution $(\rho_{\dl}, u_{\dl})$ to satisfy the initial condition \eqref{I-C-rho-a}--\eqref{I-C-rho-b} in Proposition \ref{existence-dl}, we
first modify the general initial data
$(\rho_{0}, m_{0})$ to satisfy the compatibility condition (\ref{compat-condition}).
As in \cite{F-N-P-2001}, it is easy to find a sequence $\tilde{\rho}_{\dl} \in C_{0}^{3}(\mathcal{O})$ with the property:
\begin{align*}
0\leq \tilde{\rho}_{\dl}(x)\leq \frac{1}{2}\dl^{-\frac{1}{\b}},\qquad \|\tilde{\rho}_{\dl}-\rho_{0}\|_{L^{2}(\mathcal{O})}<\dl.
\end{align*}
Take $\rho_{0, \dl}=\tilde{\rho}_{\dl}+\dl$. From \eqref{I-C-rho-a}--\eqref{I-C-rho-b}, we have
\be\label{rho-0-dl}
0<\dl \leq \rho_{0, \dl}(x)\leq \dl^{-\frac{1}{\b}},\qquad \na \rho_{0, \dl} \cdot \vec{n}|_{\partial \mathcal{O}}=0,
\en
with
\be\label{rho-0-dl-rho-0}
\rho_{0, \dl}\to \rho_{0} \qquad \mbox{ in } L^{\gamma}(\mathcal{O}) \mbox{ as } \dl \to 0.
\en
Set
\begin{align*}
\tilde{q}_{\dl}(x)=
\begin{cases}
m_{0}(x)\sqrt{\frac{\rho_{0, \dl}}{\rho_{0}}}  &\mbox{ if } \rho_{0}(x)>0,\\
0 &\mbox{ if } \rho_{0}(x)=0.
\end{cases}
\end{align*}
Then it follows from (\ref{compat-condition}) that $\frac{|\tilde{q}_{\dl}|^{2}}{\rho_{0, \dl}}$
is uniformly bounded in $L^{1}(\mathcal{O})$.
It is also direct to find $h_{\dl}\in C^{2}(\bar{\mathcal{O}})$ such that
\begin{align*}
\big\|\frac{\tilde{q}_{\dl}}{\sqrt{\rho_{0, \dl}}}-h_{\dl}\big\|_{L^{2}(\mathcal{O})}<\dl.
\end{align*}
Taking $m_{0, \dl}=h_{\dl}\sqrt{\rho_{0, \dl}}$,
we can check that
\be\label{uniform-bd}
\frac{|m_{0, \dl}|^{2}}{\rho_{0, \dl}} \qquad \mbox{ is uniformly bounded in } L^{1}(\mathcal{O}) \mbox{
 with respect to } \dl>0,
\en
and
\be\label{q-dl-q}
m_{0, \dl}\to m_{0}\qquad \mbox{ in } L^{1}(\mathcal{O}) \mbox{ as } \dl \to 0.
\en

From now on, we deal with the sequence of approximate solutions
$(c_{\dl}, \rho_{\dl}, u_{\dl}, Q_{\dl})$ of  problem (\ref{c-dl-1})$-$(\ref{Q-dl-1})
with the initial data $(c_{0}, \rho_{0, \dl}, m_{0, \dl}, Q_{0})$.
The existence of such a solution is provided by Proposition \ref{existence-dl}.
We notice that all the corresponding estimates in Proposition \ref{existence-dl}
are independent of $\dl$, by virtue of (\ref{rho-0-dl}) and (\ref{uniform-bd}).

\subsection{On the integrability of the density}

Now we provide some pressure estimates independent of $\dl>0$.
From \eqref{rho-dl-gamma}, $\rho_{\dl}\in L^{\infty}(0, T; L^{\gamma}(\O))$ uniformly in $\dl$  yields that
$\rho^{\gamma}_{\dl}\rightharpoonup \mu$ by measure, as $\dl \to 0$.
Hence, we need the higher integrability of $\rho_{\dl}$.
The technique is similar to that in \S \ref{density-ind-ve}.

Since the continuity equation (\ref{rho-dl-1}) is satisfied in the sense of renormalized solutions
in $\mathcal D^{'} ((0, T)\times \R^{3})$,
we can apply the standard mollifying operator on both sides of (\ref{rho-dl-1}) to obtain
\begin{align*}
\del_{t}S_{m}[g(\rho_{\dl})]+\mathrm{div}(S_{m}[g(\rho_{\dl})u_{\dl}])+S_{m}[(g'(\rho_{\dl})\rho_{\dl}
-g(\rho_{\dl}))\mathrm{div}\, u_{\dl}]=r_{m},
\end{align*}
with
\begin{align*}
r_{m}\to 0 \qquad \mbox{ in } L^{2}(0, T; L^{2}(\R^{3})) \mbox{ as } m\to \infty.
\end{align*}
As in \S \ref{density-ind-ve}, we use operator $\mathcal B$ to construct the test function  as
\be\label{test-function-dl}
\phi(t, x)=\psi(t)\mathcal B \big[S_{m}[g(\rho_{\dl})]-\avint_{\mathcal{O}}S_{m}[g(\rho_{\dl})]\,\md y\big],
\en
with
$$
\psi \in \mathcal D (0, T),\qquad
\avint_{\mathcal{O}}S_{m}[g(\rho_{\dl})]\,\md y:=\frac{1}{|\mathcal{O}|}\int_{\mathcal{O}}S_{m}[g(\rho_{\dl})]\,\md y.
$$
Since $\phi|_{\del \mathcal{O}}=0$, $\phi\in L^{\infty}(0, T; H^{1}_{0}(\mathcal{O}))$,
and $\del_{t}\phi \in L^{2}(0, T; H^{1}_{0}(\mathcal{O}))$, we know
that $\phi$ can be used as a test function for (\ref{u-dl-1}).
We approximate the function: $g(z)=z^{\theta}, 0<\theta \leq 1$,
by a sequence of functions $\{z^{\theta}\chi_{n}(z)\}_{n=1}^{\infty}$, where $\{\chi_{n}(z)\}_{n=1}^{\infty}$ are cutoff functions with
\begin{align*}
\chi_{n}(z)=
\begin{cases}
1 \qquad \mbox{ if } z\in [0, n],\\
0 \qquad \mbox{ if } z>2n.
\end{cases}
\end{align*}
Then, from (\ref{c-dl-gamma})$-$(\ref{na-Q-dl}),
we employ the same techniques as in Lemma \ref{lemma-density} to obtain

\begin{Lemma}\label{uniform-rho-dl}
If $\gamma >\frac{3}{2}$, there exists a constant $\theta$, depending only on $\gamma$, such that
\be\label{5.6}
\int_{0}^{T}\int_{\mathcal{O}}\big(\rho_{\dl}^{\gamma +\theta}+\dl \rho_{\dl}^{\b +\theta}\big)\,\md x \md t \leq C
\en
where $C$ is independent of $\dl$, and $0<\theta <\min \{\frac{1}{4}, \frac{2\gamma}{3}-1\}$.
\end{Lemma}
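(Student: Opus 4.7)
The strategy is the classical Bogovskii-multiplier trick of Feireisl--Novotn\'y--Petzeltov\'a, modified to accommodate the $Q$-tensor and concentration couplings. I would plug the test function $\phi(t,x)=\psi(t)\,\mathcal{B}\bigl[S_m[g(\rho_\dl)]-\avint_\mathcal{O} S_m[g(\rho_\dl)]\,\md y\bigr]$ from \eqref{test-function-dl}, with $g(z)=z^\theta\chi_n(z)$ and $\psi\in\mathcal{D}(0,T)$, into the weak form of \eqref{u-dl-1}. Because $(\rho_\dl,u_\dl)$ is a renormalized solution of \eqref{rho-dl-1}, the mollified $S_m[g(\rho_\dl)]$ satisfies a transport identity with a commutator $r_m\to 0$ in $L^2_tL^2_x$, so $\del_t\phi$ is well defined in $L^2_tH^1_{0,x}$; the cutoff $\chi_n$ ensures $g(\rho_\dl)$ is bounded and $\phi\in L^\infty_tH^1_{0,x}$ is an admissible test function.

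After integration by parts, the left-hand side yields the desired quantity $\int_0^T\!\!\int_\mathcal{O}\psi\,(\rho_\dl^\gamma+\dl\rho_\dl^\b)\,S_m[g(\rho_\dl)]\,\md x\,\md t$, while the right-hand side is a sum of terms in one-to-one correspondence with $\i_2,\dots,\i_{11}$ of Lemma \ref{lemma-density}. The next step is to bound each such term uniformly in $\dl$ using only the $\dl$-independent estimates \eqref{c-dl-gamma}--\eqref{na-Q-dl} of Proposition \ref{existence-dl} together with the continuity properties \eqref{B-W-1-p}--\eqref{B-div-r} of $\mathcal B$. The viscous, time-derivative, pressure-bulk, and $\mathcal B[\mathrm{div}(\rho u)]$ contributions are handled verbatim as in \cite{F-N-P-2001}. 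The anisotropic stresses $\na\cdot(\na Q\odot\na Q-\mathrm F(Q)\mathrm I_3)$ and $\na\cdot(Q\D Q-\D Q Q)$ are controlled as in the $\i_9$ and $\i_{11}$ estimates of Lemma \ref{lemma-density}, now invoking $Q_\dl\in L^\infty_tH^1_x\cap L^2_tH^2_x\cap L^{10}_{t,x}$ and $\na Q_\dl\in L^{10/3}_{t,x}$, while the active stress $\sigma_*\na\cdot(c_\dl^2Q_\dl)$ is immediate from the maximum principle $\|c_\dl\|_{L^\infty_{t,x}}\leq \bar c$.

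The delicate term is the convective one, $\int\psi\,\rho_\dl u_\dl^iu_\dl^j\,\del_jB_i\bigl[S_m[g(\rho_\dl)]\bigr]\,\md x\,\md t$. Writing $\rho u\otimes u=(\sqrt{\rho}u)\otimes(\sqrt{\rho}u)$ and interpolating $\sqrt{\rho_\dl}u_\dl\in L^\infty_tL^2_x$ against $u_\dl\in L^2_tL^6_x$ and $\rho_\dl\in L^\infty_tL^\gamma_x$, one places $\rho_\dl u_\dl\otimes u_\dl$ in a mixed Lebesgue space whose H\"older conjugate, when paired via \eqref{B-W-1-p} with $\rho_\dl^\theta\in L^{\gamma/\theta}_x$, forces precisely the sharp threshold $\theta<\tfrac{2\gamma}{3}-1$. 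The parallel bookkeeping for the $\dl\rho_\dl^\b$ terms (using \eqref{rho-dl-b}) yields the milder restriction $\theta<\tfrac14$, and these are exactly the two conditions appearing in the statement. Finally I would send $n\to\infty$ by monotone convergence, then $m\to\infty$ using $r_m\to 0$, and choose $\psi_k\nearrow\mathbf 1_{(0,T)}$ to arrive at \eqref{5.6}. The main obstacle will be the careful bookkeeping of the convective estimate that extracts the sharp exponent $\theta<\tfrac{2\gamma}{3}-1$; every other contribution is a routine adaptation of the uniform bounds already available in Proposition \ref{existence-dl}.
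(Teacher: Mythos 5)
Your overall strategy matches the paper's: use the Bogovskii test function from \eqref{test-function-dl}, pass $m\to\infty$, approximate $g(z)=z^\theta$ by cutoffs, and bound each term on the right-hand side uniformly in $\dl$. You also correctly trace the $\theta<\frac{2\gamma}{3}-1$ threshold to the convective term $\j_5$ (and its siblings $\j_6,\j_7$). However, you misattribute the other constraint. The $\theta<\frac14$ in the statement does \emph{not} come from the artificial-pressure contribution $\dl\rho_\dl^\b$; that term only enters $\j_1$, where the bound
$$
|\j_1|\leq C\big(\|\rho_\dl\|^\gamma_{L^\infty_t L^\gamma_x}+\dl\|\rho_\dl\|^\b_{L^\infty_t L^\b_x}\big)\|\rho_\dl\|^\theta_{L^\infty_t L^\theta_x}
$$
imposes no restriction on $\theta$ at all (since $\theta<1\leq\gamma$ and $\mathcal{O}$ is bounded). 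The actual source of the $\frac14$ is the nematic elastic stress $\na\cdot(Q_\dl\D Q_\dl-\D Q_\dl Q_\dl)$, which you dismissed as ``routine.'' It is not routine at this stage: unlike the $\ve\to0$ step of Lemma \ref{lemma-density}, where $\rho_\ve\in L^\infty_t L^\b_x$ uniformly with $\b>4$ makes $\|\rho_\ve-\bar m\|_{L^4_x}$ harmless, here the only $\dl$-independent bound is $\rho_\dl\in L^\infty_t L^\gamma_x$, so the estimate
$$
|\j_{10}|\leq C\int_0^T\|Q_\dl\|_{L^4_x}\|\D Q_\dl\|_{L^2_x}\,\big\|\na\mathcal B[\rho_\dl^\theta-\avint_{\mathcal{O}}\rho_\dl^\theta\,\md y]\big\|_{L^4_x}\,\md t
\leq C\int_0^T\|Q_\dl\|_{L^4_x}\|\D Q_\dl\|_{L^2_x}\|\rho_\dl^\theta\|_{L^4_x}\,\md t
$$
forces $4\theta\leq\gamma$, and the paper takes the $\gamma$-independent sufficient bound $\theta<\frac14$. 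The term $\na\cdot(\na Q_\dl\odot\na Q_\dl-\mathrm F(Q_\dl)\mathrm I_3)$ likewise imposes a genuine (though weaker, $\theta\leq\frac{2}{5}$) restriction. You should redo the bookkeeping for these $Q$-stress terms under the weaker density bound $\rho_\dl\in L^\infty_t L^\gamma_x$ to see where $\theta<\frac14$ actually originates.
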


\begin{proof}
Applying the test function (\ref{test-function-dl}) to (\ref{u-dl-1}), we have
\begin{align}
&\int_{0}^{T}\psi(t) \int_{\mathcal{O}} \big(\rho_{\dl}^{\gamma }+\delta \rho_{\dl}^{\b}\big)S_{m}[g(\rho_{\dl})]\,\md x \md t\nonumber\\
=&\int_{0}^{T}\psi(t) \int_{\mathcal{O}} \big(\rho_{\dl}^{\gamma }+\delta \rho_{\dl}^{\b}\big)\md x \avint_{\mathcal{O}}S_{m}[g(\rho_{\dl})]\,\md y \md t
+(\mu +\nu)\int_{0}^{T}\psi \int_{\mathcal{O}}\mathrm{div}\, u_{\dl}S_{m}[g(\rho_{\dl})]\,\md x \md t \nonumber\\
&-\int_{0}^{T}\psi_{t} \int_{\mathcal{O}}\rho_{\dl}u_{\dl}\cdot \mathcal B\big[S_{m}[g(\rho_{\dl})]-\avint_{\mathcal{O}}S_{m}[g(\rho_{\dl})]\md y\big]\,\md x \md t\nonumber \\
&+\mu \int_{0}^{T}\psi \int_{\mathcal{O}}\del_{j}u_{\dl}^{i}\del_{j} B_{i}\big[S_{m}[g(\rho_{\dl})]-\avint_{\mathcal{O}}S_{m}[g(\rho_{\dl})]\md y\big]\,\md x \md t\nonumber \\
& -\int_{0}^{T}\psi \int_{\mathcal{O}} \rho_{\dl}u_{\dl}^{i}u_{\dl}^{j}\del_{j} B_{i}\big[S_{m}[g(\rho_{\dl})]-\avint_{\mathcal{O}}S_{m}[g(\rho_{\dl})]\md y\big]\,\md x \md t\nonumber\\
&-\int_{0}^{T}\psi \int_{\mathcal{O}}\rho_{\dl}u_{\dl}\cdot \mathcal B \big[r_{m}-\avint_{\mathcal{O}}r_{m}\md y\big]\,\md x \md t \nonumber\\
&-\int_{0}^{T}\psi \int_{\mathcal{O}}\rho_{\dl}u_{\dl}\cdot \mathcal B\big[S_{m}[(g'(\rho_{\dl})\rho_{\dl} -g(\rho_{\dl}))\mathrm{div}\, u_{\dl}]\nonumber\\
&-\avint_{\mathcal{O}}S_{m}[(g'(\rho_{\dl})\rho_{\dl} -g(\rho_{\dl}))\mathrm{div}\, u_{\dl}]\md y\big]\,\md x \md t \nonumber\\
&+\int_{0}^{T}\psi \int_{\mathcal{O}}\rho_{\dl}u_{\dl}\cdot \mathcal B [\mathrm{div}\, (S_{m}[g(\rho_{\dl})]u_{\dl})]\,\md x \md t \nonumber\\
&-\int_{0}^{T}\psi \int_{\mathcal{O}} \big(\na Q_{\dl}\otimes \na Q_{\dl}
  -\mathrm{F}(Q_{\dl})\mathrm{I}_{3}\big): \na \mathcal B\big[S_{m}[g(\rho_{\dl})]-\avint_{\mathcal{O}}S_{m}[g(\rho_{\dl})]\md y\big]\,\md x \md t\nonumber\\
&
+\sigma_{*}\int_{0}^{T}\psi \int_{\mathcal{O}}c_{\dl}^{2}Q_{\dl}: \na \mathcal B \big[S_{m}[g(\rho_{\dl})]-\avint_{\mathcal{O}}S_{m}[g(\rho_{\dl})]\md y\big]\,\md x \md t\nonumber\\
&-\int_{0}^{T}\psi \int_{\mathcal{O}}\na \cdot \big(Q_{\dl}\D Q_{\dl}-\D Q_{\dl}Q_{\dl}\big)\cdot \mathcal B\big[S_{m}[g(\rho_{\dl})]-\avint_{\mathcal{O}}S_{m}[g(\rho_{\dl})]\md y\big]\,\md x \md t\nonumber\\
&=\sum_{i=1}^{11}\i_{i}.
\label{rho-dl-higher-integrability}
\end{align}
By the estimates
in Proposition \ref{existence-dl},
we can pass to the limit in \eqref{rho-dl-higher-integrability} as $m\to \infty$ to obtain
\begin{align}
&\int_{0}^{T}\psi(t) \int_{\mathcal{O}} \big(\rho_{\dl}^{\gamma }+\delta \rho_{\dl}^{\b}\big)g(\rho_{\dl})\,\md x \md t\nonumber\\
=&\int_{0}^{T}\psi(t) \int_{\mathcal{O}} \big(\rho_{\dl}^{\gamma }+\delta \rho_{\dl}^{\b}\big)\,\md x \avint_{\mathcal{O}}g(\rho_{\dl})\,\md y \md t
  +(\mu +\nu)\int_{0}^{T}\psi \int_{\mathcal{O}}\mathrm{div}\, u_{\dl}\,g(\rho_{\dl})\,\md x \md t \nonumber\\
&-\int_{0}^{T}\psi_{t} \int_{\mathcal{O}}\rho_{\dl}u_{\dl}\cdot \mathcal B\big[g(\rho_{\dl})-\avint_{\mathcal{O}}g(\rho_{\dl})\md y\big]\,\md x \md t \nonumber\\
&+\mu \int_{0}^{T}\psi \int_{\mathcal{O}}\del_{j}u_{\dl}^{i}\del_{j} B_{i}\big[g(\rho_{\dl})-\avint_{\mathcal{O}}g(\rho_{\dl})\md y\big]\,\md x \md t \nonumber\\
& -\int_{0}^{T}\psi \int_{\mathcal{O}} \rho_{\dl}u_{\dl}^{i}u_{\dl}^{j}\del_{j} B_{i}\big[g(\rho_{\dl})-\avint_{\mathcal{O}}g(\rho_{\dl})\md y\big]\,\md x \md t \nonumber\\
&-\int_{0}^{T}\psi \int_{\mathcal{O}}\rho_{\dl}u_{\dl}\cdot \mathcal B\big[(g'(\rho_{\dl})\rho_{\dl}
  -g(\rho_{\dl}))\mathrm{div}\, u_{\dl}-\avint_{\mathcal{O}}(g'(\rho_{\dl})\rho_{\dl}
  -g(\rho_{\dl}))\mathrm{div}\, u_{\dl}\md y\big]\,\md x \md t \nonumber\\
&+\int_{0}^{T}\psi \int_{\mathcal{O}}\rho_{\dl}u_{\dl}\cdot \mathcal B\big[\mathrm{div}\, (g(\rho_{\dl})u_{\dl})\big]\,\md x \md t \nonumber\\
&-\int_{0}^{T}\psi \int_{\mathcal{O}} \big(\na Q_{\dl}\otimes \na Q_{\dl}
  -\mathrm{F}(Q_{\dl})\mathrm{I}_{3}\big):  \na \mathcal B\big[g(\rho_{\dl})-\avint_{\mathcal{O}}g(\rho_{\dl})\md y\big]\,\md x \md t\nonumber\\
&
+\sigma_{*}\int_{0}^{T}\psi \int_{\mathcal{O}}c_{\dl}^{2}Q_{\dl}: \na \mathcal B\big[g(\rho_{\dl})-\avint_{\mathcal{O}}g(\rho_{\dl})\md y\big]\,\md x \md t\nonumber\\
&-\int_{0}^{T}\psi \int_{\mathcal{O}}\na \cdot \big(Q_{\dl}\D Q_{\dl}-\D Q_{\dl}Q_{\dl}\big)\cdot \mathcal B\big[g(\rho_{\dl})-\avint_{\mathcal{O}}g(\rho_{\dl})\md y\big]\,\md x \md t\nonumber\\
=&\sum_{j=1}^{10}\j_{j}. \label{rho-dl-higher-integrability-1}
\end{align}
For example, we have the following estimates:
Since $\rho_{\dl}\in L^{\b +1}(\mathcal{O}_{T})$ and $g'(z)=0$ for sufficiently large $z$, we have
\begin{align*}
&\Big|\int_{0}^{T}\psi \int_{\mathcal{O}}\rho_{\dl}^{\b}\big(S_{m}[g(\rho_{\dl})]-g(\rho_{\dl})\big)\,\md x\md t\Big|\\
&\leq C\|\rho_{\dl}\|_{L^{\b+1}_{t, x}}^{\b}\|S_{m}[g(\rho_{\dl})]-g(\rho_{\dl})\|_{L^{\b +1}_{t, x}}\\
&\leq C(\dl)\|S_{m}[g(\rho_{\dl})]-g(\rho_{\dl})\|_{L^{\b +1}_{t, x}} \to 0 \qquad  \mbox{ as } m\to \infty.
\end{align*}
By the property of $\mathcal{B}$ in \eqref{B-W-1-p}, we have
\begin{align*}
\big\|\del_{j}B_{i}\big[S_{m}[g(\rho_{\dl})]-\avint_{\mathcal{O}}S_{m}[g(\rho_{\dl})]\md y\big]\big\|_{L^{\infty}_{t}L^{3}_{x}}
&\leq C\|S_{m}[g(\rho_{\dl})]-\avint_{\mathcal{O}}S_{m}[g(\rho_{\dl})]\md x\|_{L^{\infty}_{t}L^{3}_{x}}\\[1mm]
&\leq C\|g(\rho_{\dl})\|_{L^{\infty}_{t}L^{3}_{x}}\leq C(\dl),
\end{align*}
where $C(\dl)$ is independent of $m$.
This shows that
\begin{align*}
\del_{j}B_{i}\big[S_{m}[g(\rho_{\dl})]-\avint_{\mathcal{O}}S_{m}[g(\rho_{\dl})]\md y\big]
\overset{*}\rightharpoonup G \qquad \mbox{weak-star in $L^{\infty}_{t}L^{3}_{x}$}.
\end{align*}
On the other hand, since
\begin{align*}
&\big\|\del_{j}B_{i}\big[S_{m}[g(\rho_{\dl})]-g(\rho_{\dl})-\avint_{\mathcal{O}}(S_{m}[g(\rho_{\dl})]-g(\rho_{\dl}))\,\md x\big]\big\|_{L^{2}_{t}L^{2}_{x}}\\
&\leq C\|S_{m}[g(\rho_{\dl})]-g(\rho_{\dl})-\avint_{\mathcal{O}}\big(S_{m}[g(\rho_{\dl})]-g(\rho_{\dl})\big)\,\md x\|_{L^{2}_{t}L^{2}_{x}}\\
&\leq C\|S_{m}[g(\rho_{\dl})]-g(\rho_{\dl})\|_{L^{2}_{t}L^{2}_{x}}\to 0,
\end{align*}
we have
\begin{align*}
\del_{j}B_{i}\big[S_{m}[g(\rho_{\dl})]-\avint_{\mathcal{O}}S_{m}[g(\rho_{\dl})]\md y\big]
\overset{*}\rightharpoonup \del_{j}B_{i}\big[g(\rho_{\dl})-\avint_{\mathcal{O}}g(\rho_{\dl})\md y\big] \,\,\,\, \mbox{weak-star in $L^{\infty}_{t}L^{3}_{x}$}.
\end{align*}
Then
\begin{align*}
\i_{5}&=\int_{0}^{T}\psi \int_{\mathcal{O}} \rho_{\dl}u_{\dl}^{i}u_{\dl}^{j}\del_{j}
B_{i}\big[S_{m}[g(\rho_{\dl})]-\avint_{\mathcal{O}}S_{m}[g(\rho_{\dl})]\md y\big]\,\md x \md t\\
&\to \int_{0}^{T}\psi \int_{\mathcal{O}} \rho_{\dl}u_{\dl}^{i}u_{\dl}^{j}\del_{j} B_{i}\big[g(\rho_{\dl})-\avint_{\mathcal{O}}g(\rho_{\dl})\md y\big]\,\md x \md t
=\j_{5}.
\end{align*}
By \eqref{B-div-r}, we have
\begin{align*}
&\mathcal B\big[\mathrm{div}\, (S_{m}[g(\rho_{\dl})]u_{\dl})\big]
  \to \mathcal B\big[\mathrm{div}\, (g(\rho_{\dl})u_{\dl})\big] \qquad \mbox{ strongly in }L^{\frac{3}{2}}_{t}L^{2}_{x},\\
&\mathcal B\big[\mathrm{div}\, (S_{m}[g(\rho_{\dl})]u_{\dl})\big]
  \rightharpoonup G \qquad\qquad\qquad\qquad\, \mbox{ weakly in }L^{2}(Q_{T}).
\end{align*}
Thus, we obtain
\begin{align*}
&\mathcal B\big[\mathrm{div}\, (S_{m}[g(\rho_{\dl})]u_{\dl})\big]
  \rightharpoonup \mathcal B\big[\mathrm{div}\, (g(\rho_{\dl})u_{\dl})\big] \qquad \mbox{ weakly in }L^{2}(Q_{T}).
\end{align*}
Moreover, since $\psi \rho_{\dl}u_{\dl}\in L^{2}(Q_{T})$, we have
\begin{align*}
&\i_{8}=\int_{0}^{T}\int_{\mathcal{O}}\psi \rho_{\dl}u_{\dl}\cdot \mathcal B\big[\mathrm{div}\, (S_{m}[g(\rho_{\dl})]u_{\dl})\big]\md x \md t
 \to \int_{0}^{T}\int_{\mathcal{O}}\psi \rho_{\dl}u_{\dl}\cdot \mathcal B \big[\mathrm{div}\, (g(\rho_{\dl})u_{\dl})\big] \md x \md t =\j_{7}
\end{align*}
as $m \to \infty$. Then, as we mentioned before, we can use a sequence of functions $\{z^{\theta}\chi_{n}(z)\}$
to approximate $g(z)=z^{\theta}$ to obtain
\begin{equation}\label{5.9a}
\int_{0}^{T}\int_{\mathcal{O}}\psi \big(\rho_{\dl}^{\gamma +\theta}+\dl \rho_{\dl}^{\b +\theta}\big)\,\md x \md t
=\sum_{i=1}^{10}\j_{i},
\end{equation}
where $g(z)$ is substituted by $z^{\theta}$ in every $\j_{i}$, $i=1, 2, \cdots, 10$.

Next, we estimate the terms on the right-hand side of \eqref{5.9a}
to obtain the condition for $\theta$,
for which the universal constant $C$ is independent of $\dl$:
\begin{align*}
&|\j_{1}|=\Big|\int_{0}^{T}\psi \int_{\mathcal{O}}(\rho_{\dl}^{\gamma}+\delta \rho_{\dl}^{\b})\,\md x \avint_{\mathcal{O}}\rho_{\dl}^{\theta}\,\md x \md t\Big| \leq C\big(\|\rho_{\dl}\|_{L^{\infty}_{t}L^{\gamma}_{x}}^{\gamma}+\delta \|\rho_{\dl}\|_{L^{\infty}_{t}L^{\b}_{x}}^{\b}\big)\|\rho_{\dl}\|_{L^{\infty}_{t}L^{\theta}_{x}}^{\theta}\leq C.\\
&|\j_{2}|=(\mu +\nu)\Big|\int_{0}^{T}\psi \int_{\mathcal{O}}\rho_{\dl}^{\theta}\mathrm{div}\, u_{\dl}\,\md x \md t\Big|
 \leq C\|\na u_{\dl}\|_{L^{2}(Q_{T})}\|\rho_{\dl}^{\theta}\|_{L^{\infty}_{t}L^{2}_{x}}
\leq C \qquad\mbox{if $\theta \leq \frac{\gamma}{2}$}.\\
&|\j_{3}|=\Big|\int_{0}^{T}\psi_{t} \int_{\mathcal{O}}\rho_{\dl}u_{\dl}\cdot \mathcal B\big[\rho_{\dl}^{\theta}-\avint_{\mathcal{O}}\rho_{\dl}^{\theta}\md y\big]\,\md x \md t\Big|\\
&\quad \leq C\int_{0}^{T} \|\sqrt{\rho_{\dl}}\|_{L^{2\gamma}_{x}} \|\sqrt{\rho_{\dl}}u_{\dl}\|_{L^{2}_{x}}
   \big\|\mathcal B\big[\rho_{\dl}^{\theta}-\avint_{\mathcal{O}}\rho_{\dl}^{\theta}\md y\big]\big\|_{L^{\frac{2\gamma}{\gamma -1}}_{x}}\md t\\
&\quad\leq  C\int_{0}^{T} \|\rho_{\dl}\|_{L^{\gamma}_{x}}^{\frac{1}{2}} \|\sqrt{\rho_{\dl}}u_{\dl}\|_{L^{2}_{x}}
   \big\|\rho_{\dl}^{\theta}-\avint_{\mathcal{O}}\rho_{\dl}^{\theta}\md y\big\|_{L^{\frac{2\gamma}{\gamma -1}}_{x}}\md t\\
&\quad\leq CT\|\rho_{\dl}\|_{L^{\infty}_{t}L^{\gamma}_{x}}^{\frac{1}{2}} \|\sqrt{\rho_{\dl}}u_{\dl}\|_{L^{\infty}_{t}L^{2}_{x}}
  \|\rho_{\dl}\|_{L^{\infty}_{t}L^{\frac{2\gamma}{\gamma -1}\theta}_{x}}^{\theta}\leq C(T)\;\quad  \mbox{ if $\theta \leq \frac{\gamma-1}{2}$}. \\
|\j_{4}|&=\mu \Big|\int_{0}^{T}\psi \int_{\mathcal{O}}\del_{j}u_{\dl}^{i}\del_{j}B_{i}\big[\rho_{\dl}^{\theta}-\avint_{\mathcal{O}}\rho_{\dl}^{\theta}\md y\big]\md x \md t\Big| \\
&\leq  \mu \int_{0}^{T}\|\na u_{\dl}\|_{L^{2}_{x}}\big\|\na \mathcal B\big[\rho_{\dl}^{\theta}-\avint_{\mathcal{O}}\rho_{\dl}^{\theta}\md y\big]\big\|_{L^{2}_{x}}\md t\\
&\leq \mu \int_{0}^{T}\|\na u_{\dl}\|_{L^{2}_{x}}\| \rho_{\dl}^{\theta}-\avint_{\mathcal{O}}\rho_{\dl}^{\theta}\md y\|_{L^{2}_{x}}\md t\leq C \qquad \mbox{if $\theta \leq \frac{\gamma}{2}$}.
\end{align*}
\begin{align*}
&|\j_{5}|=\Big|\int_{0}^{T}\psi \int_{\mathcal{O}} \rho_{\dl}u_{\dl}^{i}u_{\dl}^{j}\del_{j} B_{i}\big[\rho_{\dl}^{\theta}-\avint_{\mathcal{O}}\rho_{\dl}^{\theta}\md y\big]\md x \md t\Big|\\
&\quad\leq C\int_{0}^{T}\|\rho_{\dl}\|_{L^{\gamma}_{x}}\|u_{\dl}\|^{2}_{L^{6}_{x}}\big\| \na \mathcal B\big[\rho_{\dl}^{\theta}-\avint_{\mathcal{O}}\rho_{\dl}^{\theta}\md y\big]\big\|_{L^{\frac{3\gamma}{2\gamma -3}}_{x}}\md t\\
&\quad\leq C\int_{0}^{T}\|\rho_{\dl}\|_{L^{\gamma}_{x}}\|u_{\dl}\|^{2}_{L^{6}_{x}}\|\rho_{\dl}^{\theta}\|_{L^{\frac{3\gamma}{2\gamma -3}}_{x}}\md t\leq C \qquad\mbox{if $\theta \leq \frac{2\gamma}{3}-1$}.
\end{align*}

\begin{align*}
|\j_{6}|&= (1-\theta)\Big|\int_{0}^{T}\psi \int_{\mathcal{O}}\rho_{\dl}u_{\dl}\cdot \mathcal{B}\big[\rho_{\dl}^{\theta}\mathrm{div}\, u_{\dl}
  -\avint_{\mathcal{O}}\rho_{\dl}^{\theta}\mathrm{div}\, u_{\dl}\md y\big]\,\md x \md t\Big|\\
&\leq C \int_{0}^{T}\|\rho_{\dl}\|_{L^{\gamma}_{x}}\|u_{\dl}\|_{L^{6}_{x}}
      \big\| \mathcal B\big[\rho_{\dl}^{\theta}\mathrm{div}\, u_{\dl}-\avint_{\mathcal{O}}\rho_{\dl}^{\theta}\mathrm{div}\, u_{\dl}\md y\big]\big\|_{L^{\frac{6\gamma}{5\gamma-6}}_{x}}\md t\\
&\leq C \int_{0}^{T}\|u_{\dl}\|_{L^{6}_{x}}\big\| \mathcal B\big[\rho_{\dl}^{\theta}\mathrm{div}\, u_{\dl}-\avint_{\mathcal{O}}\rho_{\dl}^{\theta}\mathrm{div}\, u_{\dl}\md y\big]\big\|_{W^{1,p}_{x}}\md t\\
& \leq C \int_{0}^{T}\|u_{\dl}\|_{L^{6}_{x}}\| \rho_{\dl}^{\theta}\mathrm{div}\, u_{\dl}\|_{L^{p}_{x}}\md t\\
&\leq C \int_{0}^{T}\|u_{\dl}\|_{L^{6}_{x}}\|\na u_{\dl}\|_{L^{2}_{x}}\| \rho_{\dl}^{\theta}\|_{L^{q}_{x}}\md t\\
&\leq C \int_{0}^{T}\|u_{\dl}\|_{H^{1}_{x}}^{2}\| \rho_{\dl}\|_{L^{\gamma}_{x}}\md t\leq C,
\end{align*}
where
$p=\frac{6\gamma}{7\gamma -6}$  if $\gamma<6$, $\,\, p=\frac{3}{2}$ if $\gamma \geq 6$;
$\,q=\frac{3\gamma}{2\gamma -3}$ if $\gamma <6$, $p=6$  if $\gamma \geq 6$;
$\theta \leq \frac{2}{3}\gamma -1$ and $\theta \leq 1$.

\begin{align*}
|\j_{7}|&=\Big|\int_{0}^{T}\psi \int_{\mathcal{O}}\rho_{\dl}u_{\dl}\cdot \mathcal B\big[\mathrm{div}(\rho_{\dl}^{\theta}u_{\dl})\big]\,\md x \md t\Big|
 \leq \int_{0}^{T}\|\rho_{\dl}\|_{L^{\gamma}_{x}}\|u_{\dl}\|_{L^{6}_{x}}
   \big\| \mathcal B [\mathrm{div} (\rho_{\dl}^{\theta}u_{\dl})]\big\|_{L^{\frac{6\gamma}{5\gamma -6}}_{x}}\md t\\
&\leq \int_{0}^{T}\|u_{\dl}\|_{L^{6}_{x}}\|\rho_{\dl}^{\theta}u_{\dl}\|_{L^{\frac{6\gamma}{5\gamma -6}}_{x}}\md t \leq  \int_{0}^{T}\|u_{\dl}\|_{L^{6}_{x}}^{2}\|\rho_{\dl}^{\theta}\|_{L^{\frac{3\gamma}{2\gamma -3}}_{x}}\md t \leq C \|u_{\dl}\|_{L^{2}_{t}H^{1}_{x}}^{2}\|\rho_{\dl}\|_{L^{\infty}_{t}L^{\gamma}_{x}}^{\theta}\\
&\leq C \qquad\mbox{if $\theta \leq \frac{2\gamma-3}{3}$}.
\end{align*}

Similarly to \eqref{B-l-infty}, we have
\begin{align*}
&\big\|B[\rho_{\dl}^{\theta}-\avint_{\mathcal{O}}\rho_{\dl}^{\theta}\,\md y]\big\|_{L^{\infty}_{x}}\\
&\leq C_{1}\big\|\na B\big[\rho_{\dl}^{\theta}-\avint_{\mathcal{O}}\rho_{\dl}^{\theta}\,\md y\big]\big\|_{L^{\gamma}_{x}}^{\frac{3}{\gamma}}
  \big\|B\big[\rho_{\dl}^{\theta}-\avint_{\mathcal{O}}\rho_{\dl}^{\theta}\,\md y\big]\big\|_{L^{\gamma}_{x}}^{1-\frac{3}{\gamma}}
  +C_{2}\big\|B\big[\rho_{\dl}^{\theta}-\avint_{\mathcal{O}}\rho_{\dl}^{\theta}\,\md y\big]\big\|_{L^{\gamma}_{x}}\\
&\leq C\|\rho_{\dl}^{\theta}\|_{L^{\gamma}_{x}}.
\end{align*}
Then we have
\begin{align*}
&|\j_{8}|=\Big|\int_{0}^{T}\psi \int_{\mathcal{O}}\big(\na Q_{\dl}\otimes \na Q_{\dl}-\mathrm{F}(Q_{\dl})\mathrm{I}_{3}\big):\na \mathcal B\big[\rho_{\dl}^{\theta}
   -\avint_{\mathcal{O}}\rho_{\dl}^{\theta}\md y\big]\,\md x \md t\Big|\\
&\leq C \int_{0}^{T}\Big(\|\na Q_{\dl}\|_{L^{\frac{10}{3}}_{x}}^{2}
   \big\|\na \mathcal B\big[\rho_{\dl}^{\theta}-\avint_{\mathcal{O}}\rho_{\dl}^{\theta}\md y\big]\big\|_{L^{\frac{5}{2}}_{x}}
     +\big| \int_{\mathcal{O}}\mathrm{F}(Q_{\dl}) \mathrm{div} \mathcal B\big[\rho_{\dl}^{\theta}-\avint_{\mathcal{O}}\rho_{\dl}^{\theta}\md y\big]\md x\big|\Big)\md t\\
&\leq C \int_{0}^{T}\Big(\|\na Q_{\dl}\|_{L^{\frac{10}{3}}_{x}}^{2}\big\|\na \mathcal B\big[\rho_{\dl}^{\theta}-\avint_{\mathcal{O}}\rho_{\dl}^{\theta}\md y\big]\big\|_{L^{\frac{5}{2}}_{x}}
 +\big|\int_{\mathcal{O}}\mathrm{F}(Q_{\dl})(\rho_{\dl}^{\theta}-\avint_{\mathcal{O}}\rho_{\dl}^{\theta}\md y)\md x\big|\Big)\md t\\
&\leq  C\int_{0}^{T}\Big(\|\na Q_{\dl}\|_{L^{\frac{10}{3}}_{x}}^{2}\big\|\rho_{\dl}^{\theta}-\avint_{\mathcal{O}}\rho_{\dl}^{\theta}\md y\big\|_{L^{\frac{5}{2}}_{x}}+ \big(\|Q_{\dl}\|_{L^{5}_{x}}^{2}+\|Q_{\dl}\|_{L^{10}_{x}}^{4}\big)\big\|\rho_{\dl}^{\theta}-\avint_{\mathcal{O}}\rho_{\dl}^{\theta}\md y\big\|_{L^{\frac{5}{3}}_{x}}\Big)\md t\\
&\leq C \qquad\mbox{if $\theta \leq \frac{2}{5}$}.
\end{align*}

\begin{align*}
|\i_{9}|&=\sigma_{*}\Big|\int_{0}^{T}\psi \int_{\mathcal{O}}c_{\dl}^{2}Q_{\dl}\cdot \na \mathcal B\big[\rho_{\dl}^{\theta}-\avint_{\mathcal{O}}\rho_{\dl}^{\theta}\md y\big]\md x \md t\Big|\\
&\leq C\|c_{\dl}\|_{L^{\infty}_{t, x}}^{2}\int_{0}^{T}\|Q_{\dl}\|_{L^{2}_{x}}\big\|\na \mathcal B\big[\rho_{\dl}^{\theta}-\avint_{\mathcal{O}}\rho_{\dl}^{\theta}\md y\big]\big\|_{L^{2}_{x}}\md t \\
&\leq C\int_{0}^{T}\|Q_{\dl}\|_{L^{2}_{x}}\|\rho_{\dl}^{\theta}\|_{L^{2}_{x}}\md t \leq C \qquad\mbox{if $\theta \leq \frac{\gamma}{2}$}.
\end{align*}
\begin{align*}
|\i_{10}|&=\Big|\int_{0}^{T}\psi \int_{\mathcal{O}}\na \cdot \big(Q_{\dl}\D Q_{\dl}-\D Q_{\dl}Q_{\dl}\big)\cdot \mathcal B \big[\rho_{\dl}^{\theta}
  -\avint_{\mathcal{O}}\rho_{\dl}^{\theta}\,\md y\big]\,\md x \md t\Big|\\
&\leq\int_{0}^{T}\|Q_{\dl}\|_{L^{4}_{x}}\|\D Q_{\dl}\|_{L^{2}_{x}}\big\|\na \mathcal B\big[\rho_{\dl}^{\theta}-\avint_{\mathcal{O}}\rho_{\dl}^{\theta}\,\md y\big]\big\|_{L^{4}_{x}}\md t\\
&\leq\int_{0}^{T}\|Q_{\dl}\|_{L^{4}_{x}}\|\D Q_{\dl}\|_{L^{2}_{x}}\|\rho_{\dl}^{\theta}\|_{L^{4}_{x}}\md t\leq C \qquad \mbox{if $\theta \leq \frac{1}{4}$}.
\end{align*}

Combining the above estimates, we obtain the desired result.
\end{proof}

\subsection{The limit passage and the effective viscous flux}
We infer from the uniform estimates (\ref{c-dl-gamma})$-$(\ref{na-Q-dl}) in Proposition \ref{existence-dl}
and Lemma \ref{uniform-rho-dl} that as $\dl \to 0$,
\begin{align*}
&c_{\dl}\rightharpoonup c \qquad\quad\,\,\,\,\, \mbox{ in }L^{2}(0, T; H^{1}(\mathcal{O})),\\
&\rho_{\dl}\to \rho \qquad\quad\,\,\,\,  \mbox{ in } C([0, T]; L^{\gamma}_{\rm weak}(\mathcal{O})),\\
&u_{\dl}\rightharpoonup u \qquad\quad\,\,\,  \mbox{ in } L^{2}(0, T; H_{0}^{1}(\mathcal{O})),\\
&\rho_{\dl}u_{\dl}\to \rho u \qquad \mbox{ in } C([0, T]; L_{\rm weak}^{\frac{2\gamma}{\gamma +1}}(\mathcal{O})),\\
&Q_{\dl}\rightharpoonup Q \qquad\quad \mbox{ in } L^{2}(0, T; H^{2}(\mathcal{O})),\\
&Q_{\dl}\to Q \qquad\quad \mbox{ in } L^{2}(0, T; H^{1}(\mathcal{O})),\\
&\rho_{\dl}^{\gamma}\to \overline{\rho^{\gamma}} \qquad\quad \mbox{ in } L^{\frac{\gamma +\theta}{\gamma}}(\mathcal{O}_{T}).
\end{align*}
Moreover, we  have
\begin{align*}
&\rho_{\dl}u_{\dl}\otimes u_{\dl}\to \rho u\otimes u \qquad \mbox{ in } \mathcal D^{'} (\mathcal{O}_{T}),\\
&\mathrm{F}(Q_{\dl})\mathrm{I}_{3}-\na Q_{\dl}\odot \na Q_{\dl}+(Q_{\dl}\D Q_{\dl}-\D Q_{\dl}Q_{\dl})
+\sigma_{*} c_{\dl}^{2}Q_{\dl} \\
&\qquad \to \mathrm{F}(Q)\mathrm{I}_{3}-\na Q\odot \na Q+(Q\D Q-\D QQ)
+\sigma_{*} c^{2}Q,\\
& \dl \rho_{\dl}^{\b}\to 0 \qquad \mbox{ in } L^{1}(\mathcal{O}_{T}).
\end{align*}
Then
the limit functions $(c, \rho, u, Q)$ satisfy
\begin{align}
& c_{t}+u\cdot \na c=D_{0}\D c, \label{c-dl-2}\\
& \del_{t}\rho+\na \cdot (\rho u)=0, \label{rho-dl-2}\\
& (\rho u)_{t}+\na \cdot (\rho u\otimes u)+\na \overline{\rho^{\gamma}}
   =\mu \D u+(\nu +\mu)\na \mathrm{div}\, u+\nabla \cdot (Q\D Q-\D Q Q)\\
&\qquad\qquad\qquad\qquad\qquad\qquad\quad +\na \cdot \big(\mathrm{F}(Q)\mathrm{I}_{3}-\na Q\odot \na Q\big)+\sigma_{*} \na \cdot (c^{2}Q),  \label{u-dl-2}\\
&\partial_{t}Q+(u\cdot \nabla )Q+Q\Omega-\Omega Q
=\Gamma H[Q, c] \label{Q-dl-2}
\end{align}
in $\mathcal D^{'} (\mathcal{O}_{T})$, with the initial-boundary conditions (\ref{I-C})$-$(\ref{compat-condition}), due to (\ref{rho-0-dl-rho-0}) and (\ref{q-dl-q}).

\medskip
Next, in order to prove that $(c, \rho, u, Q)$ is a weak solution of \eqref{c}$-$\eqref{compat-condition},
we only need to
show that $\overline{\rho^{\gamma}}=\rho^{\gamma}$ {\it a.e.} in $\mathcal{O}_{T}$,
or equivalently, the strong convergence of $\rho_{\dl}$ in $L^{1}(\mathcal{O})$.
As in \S 4, we need to show that $(\rho, u)$ is a renormalized solution of \eqref{rho-dl-2}.
From Lemma \ref{uniform-rho-dl}, the best estimate is $\rho \in L^{\gamma +\frac{2}{3}\gamma -1}(\mathcal{O}_{T})$.
Then $\gamma >\frac{3}{2}$ is not enough to guarantee that $\rho$ is square integrable,  and that ($\rho, u$) is a renormalized solution by Lemma \ref{suff-cond-renorm-solu} in Appendix A.
In order to deal with this difficulty, 
we introduce the cut-off function $T_{k}(z)=kT(\frac{z}{k})$ for $z\in \R$, $k=1, 2, \cdots$,
where $T$ is a smooth and concave function satisfying
\begin{align*}
T(z)=
\begin{cases}
z \qquad \mbox{ if } z\leq 1, \\
2 \qquad \mbox{ if } z\geq 3.
\end{cases}
\end{align*}
Since $(\rho_{\dl}, u_{\dl})$ is a renormalized solution of (\ref{rho-dl-1}),
taking $g(z)=T_{k}(z)$, we obtain
\be\label{renorm-rho-dl-trunc}
\del_{t}\big(T_{k}(\rho_{\dl})\big)+\mathrm{div} \big(T_{k}(\rho_{\dl})u_{\dl}\big)+\big(T'_{k}(\rho_{\dl})-T_{k}(\rho_{\dl})\big)\mathrm{div}\, u_{\dl}=0
\qquad \mbox{in $\mathcal D^{'} ((0, T)\times \R^{3})$},
\en
where  $T_{k}(\rho_{\dl})\in L^{\infty}(\mathcal{O}_{T})$ for any fixed $k$.
Then
\begin{align*}
T_{k}(\rho_{\dl})\overset{*}\rightharpoonup\, \overline{T_{k}(\rho)}\qquad\,\, \mbox{in} \,\,\, L^{\infty}(\mathcal{O}_{T}).
\end{align*}
Moreover, since $\del_{t}T_{k}(\rho_{\dl})$ satisfies (\ref{renorm-rho-dl-trunc}),
as before, we have
\begin{align*}
T_{k}(\rho_{\dl})\to \overline{T_{k}(\rho)} \qquad \mbox{ in } C([0, T]; L^{p}_{\rm weak}(\mathcal{O})), \quad \forall\, 1\leq p <\infty.
\end{align*}
Letting $\dl \to 0$, it yields
\begin{align*}
\del_{t}\overline{T_{k}(\rho)}+\mathrm{div} \big(\overline{T_{k}(\rho)}u\big)
 +\overline{(T'_{k}(\rho)-T_{k}(\rho))\mathrm{div} u}=0
\qquad\, \mbox{in}\,\,\, \mathcal{D}^{'}(\mathcal{O}_{T}),
\end{align*}
where
\begin{align*}
\big(T'_{k}(\rho_{\dl})-T_{k}(\rho_{\dl})\big)\mathrm{div} u_{\dl}\rightharpoonup \overline{\big(T'_{k}(\rho)-T_{k}(\rho)\big)\mathrm{div} u}
\qquad  \mbox{ in } L^{2}(\mathcal{O}_{T}).
\end{align*}

\subsection{The effective viscous flux}
Similarly to Lemma \ref{2}, we define the effective viscous flux
as $\tilde{\mathfrak{E}}_{\dl}:=\rho_{\dl}^{\gamma}-(\nu+2\mu)\mathrm{div}\, u_{\dl}$,
and its correspondingly weak convergence limit $\tilde{\mathfrak{E}}:=\overline{\rho^{\gamma}}-(\nu+2\mu)\mathrm{div}\, u$.
Then we have

\begin{Lemma}\label{e-v-f-dl}
Assume $(\rho_{\dl}, u_{\dl})$ is a family of the approximate solutions constructed in Proposition {\rm \ref{existence-dl}}.
Then
$$
\lim_{\dl \to 0^{+}}\int_{0}^{T}\psi \Big(\int_{\mathcal{O}} \phi \tilde{\mathfrak{E}}_{\dl}T_{k}(\rho_{\dl})\,\md x\Big) \md t
=\int_{0}^{T}\psi \Big(\int_{\mathcal{O}} \phi \tilde{\mathfrak{E}}\overline{T_{k}(\rho)} \,\md x\Big) \md t
$$
for any $\psi \in \mathcal D (0, T)$ and $\phi \in \mathcal D (\mathcal{O})$.
\end{Lemma}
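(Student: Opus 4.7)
\medskip

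\noindent\textbf{Proof Proposal for Lemma \ref{e-v-f-dl}.} The plan is to mimic the argument of Lemma \ref{2}, but with $T_k(\rho_\delta)$ playing the role that $\rho_\varepsilon$ played at the previous level. More precisely, I will use
\begin{equation*}
\varphi_\delta(t,x)=\psi(t)\phi(x)\mathcal{A}[T_k(\rho_\delta)]
\end{equation*}
as a test function in the momentum equation \eqref{u-dl-1} satisfied by $(c_\delta,\rho_\delta,u_\delta,Q_\delta)$, and the analogous test function
\begin{equation*}
\varphi(t,x)=\psi(t)\phi(x)\mathcal{A}[\overline{T_k(\rho)}]
\end{equation*}
in the limit momentum equation \eqref{u-dl-2}. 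Since $T_k(\rho_\delta)\in L^\infty(\mathcal{O}_T)$ uniformly in $\delta$ and, as established above, $T_k(\rho_\delta)\to \overline{T_k(\rho)}$ in $C([0,T];L^p_{\mathrm{weak}}(\mathcal{O}))$ for every $1\le p<\infty$, the properties of $\mathcal{A}$ give $\mathcal{A}[T_k(\rho_\delta)]\to \mathcal{A}[\overline{T_k(\rho)}]$ in $C([0,T];W^{1,p}_{\mathrm{weak}}(\mathcal{O}))$ and, by compact embedding, strongly in $C(\overline{\mathcal{O}_T})$; likewise $\nabla \mathcal{A}[T_k(\rho_\delta)]\to \nabla \mathcal{A}[\overline{T_k(\rho)}]$ in $C([0,T];H^{-1}(\mathcal{O}))$ when $p>\tfrac{6}{5}$.

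The next step is to exploit the renormalized form of the continuity equation: since $T_k$ satisfies \eqref{g}, $(\rho_\delta,u_\delta)$ is a renormalized solution, so that $T_k(\rho_\delta)$ obeys \eqref{renorm-rho-dl-trunc} in $\mathcal{D}'((0,T)\times\R^3)$ after extension by zero, and the weak limit $\overline{T_k(\rho)}$ satisfies the analogous equation driven by $\overline{(T_k'(\rho)\rho-T_k(\rho))\mathrm{div}\,u}$. This lets me rewrite the time-derivative term $\int \psi_t \phi\,\rho_\delta u_\delta\cdot \mathcal{A}[T_k(\rho_\delta)]$ and evaluate the crucial convective piece
\begin{equation*}
\rho_\delta u_\delta^i\Bigl(\mathcal{A}_i[\mathrm{div}(T_k(\rho_\delta)u_\delta)]-u_\delta^j\partial_j\mathcal{A}_i[T_k(\rho_\delta)]\Bigr),
\end{equation*}
so that the structure of the identity matches, term by term, the one obtained by testing \eqref{u-dl-2} with $\varphi$. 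Then, after subtracting the two identities, all terms other than the one containing the effective viscous flux will cancel in the limit, provided the compensated-compactness step below is carried out.

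Most of the remaining pieces pass to the limit by the weak/strong convergences already collected in the preceding subsection: the $Q$-tensor contributions $\nabla\cdot(\nabla Q_\delta\odot\nabla Q_\delta-\mathrm{F}(Q_\delta)\mathrm{I}_3)$, $\nabla\cdot(Q_\delta\Delta Q_\delta-\Delta Q_\delta Q_\delta)$, and $\sigma_*\nabla\cdot(c_\delta^2 Q_\delta)$, paired with $\mathcal{A}[T_k(\rho_\delta)]$ or $\nabla\mathcal{A}[T_k(\rho_\delta)]$, converge to the analogous expressions involving $\overline{T_k(\rho)}$ using $Q_\delta\to Q$ in $L^2(0,T;H^1)$, $c_\delta\to c$ in $L^2(\mathcal{O}_T)$, the bounds in \eqref{Q-dl-10}--\eqref{na-Q-dl}, and strong convergence of $\mathcal{A}[T_k(\rho_\delta)]$ in $C(\overline{\mathcal{O}_T})$. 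The $\delta\rho_\delta^\beta$ term vanishes since $\delta\rho_\delta^\beta\to 0$ in $L^1(\mathcal{O}_T)$, and the linear viscosity term is handled by $u_\delta\rightharpoonup u$ in $L^2(0,T;H^1_0)$ against $\nabla\mathcal{A}[T_k(\rho_\delta)]\to \nabla\mathcal{A}[\overline{T_k(\rho)}]$ in $C([0,T];H^{-1})$.

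The hard part, exactly as in Lemma \ref{2}, is the commutator
\begin{equation*}
\rho_\delta\,\partial_j\mathcal{A}_i[T_k(\rho_\delta)u_\delta^j]-\rho_\delta u_\delta^j\,\partial_j\mathcal{A}_i[T_k(\rho_\delta)]
\longrightarrow \rho\,\partial_j\mathcal{A}_i[\overline{T_k(\rho)}u^j]-\rho u^j\,\partial_j\mathcal{A}_i[\overline{T_k(\rho)}]
\end{equation*}
in $L^2(0,T;H^{-1}(\mathcal{O}))$. To handle it I would follow the compensated-compactness strategy used for Lemma \ref{2}: use the bounds $\rho_\delta\in L^\infty(0,T;L^\gamma)$, $T_k(\rho_\delta)\in L^\infty(\mathcal{O}_T)$, $\rho_\delta u_\delta\in L^\infty(0,T;L^{2\gamma/(\gamma+1)})$, together with the $W^{1,p}$-continuity \eqref{a-w1p} of $\mathcal{A}$, to show the commutator is uniformly bounded in $L^\infty(0,T;L^\alpha)$ for some $\alpha>\tfrac{6}{5}$, so that the compact embedding $L^\alpha\hookrightarrow H^{-1}$ and Lions' commutator lemma (as invoked via Lemma 3.4 in \cite{F-N-P-2001}) yield the required strong convergence in $L^2(0,T;H^{-1})$; pairing with $u_\delta\rightharpoonup u$ in $L^2(0,T;H^1_0)$ then closes the argument. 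Subtracting the two identities and comparing gives precisely the stated limit for $\int\psi\int\phi\,\tilde{\mathfrak{E}}_\delta T_k(\rho_\delta)\,\md x\md t$.
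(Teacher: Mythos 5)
Your overall plan---test the $\delta$-level momentum equation \eqref{u-dl-1} with $\psi\phi\mathcal{A}[T_k(\rho_\delta)]$, test the limit equation \eqref{u-dl-2} with $\psi\phi\mathcal{A}[\overline{T_k(\rho)}]$, use the renormalized continuity equation \eqref{renorm-rho-dl-trunc} to manage the time-derivative of $\mathcal{A}[T_k(\rho_\delta)]$, and compare the resulting identities term by term---is precisely what the paper intends (it gives no written proof of this lemma, merely a pointer to ``Similarly to Lemma \ref{2}''), and the convergences you invoke for the pressure, viscosity, and $Q$-tensor pieces are sound.

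There is, however, a gap in your treatment of the key commutator. As you have written it, the quantity
\begin{equation*}
\rho_\delta\,\del_j\mathcal{A}_i[T_k(\rho_\delta)u_\delta^j]-\rho_\delta u_\delta^j\,\del_j\mathcal{A}_i[T_k(\rho_\delta)]
\end{equation*}
is \emph{not} of the div-curl form $V\,\del_j\mathcal{A}_i[W^j]-W^j\,\del_j\mathcal{A}_i[V]$ to which Lemma 3.4 of \cite{F-N-P-2001} applies: in Lemma \ref{2} the analogous quantity does have this form because both the outer weight and the argument of $\mathcal{A}$ are $\rho_\ve$, whereas here the outer weight is $\rho_\delta$ while $\mathcal{A}$ acts on $T_k(\rho_\delta)$, so no single choice of $(V,W)$ fits. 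In addition, since $T_k(\rho_\delta)u_\delta^j$ carries only $L^2$ regularity in time (through $u_\delta\in L^2(0,T;H^1_0)$), the quantity is \emph{not} uniformly bounded in $L^\infty(0,T;L^\alpha(\mathcal{O}))$ as you claim. The fix is to exploit the $L^2(\R^3)$-self-adjointness of $\del_j\mathcal{A}_i=\del_i\del_j\D^{-1}$ on the zero-extended, compactly supported integrands (recall $\phi\in\mathcal{D}(\mathcal{O})$ and $u_\delta|_{\del\mathcal{O}}=0$) to rewrite the paired term as
\begin{equation*}
\int_0^T\!\!\int_{\mathcal{O}}\psi\, u_\delta^j\Big(T_k(\rho_\delta)\,\del_j\mathcal{A}_i\big[\phi\rho_\delta u_\delta^i\big]-\phi\rho_\delta u_\delta^i\,\del_j\mathcal{A}_i\big[T_k(\rho_\delta)\big]\Big)\,\md x\,\md t,
\end{equation*}
which \emph{is} of the canonical form $u_\delta^j\big(V\,\del_j\mathcal{A}_i[W^i]-W^i\,\del_j\mathcal{A}_i[V]\big)$ with $V=T_k(\rho_\delta)\in L^\infty(\mathcal{O}_T)$ and $W^i=\phi\rho_\delta u_\delta^i$ bounded in $L^\infty(0,T;L^{2\gamma/(\gamma+1)}(\mathcal{O}))$, both converging in $C([0,T];L^p_{\rm weak}(\mathcal{O}))$. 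Now Lemma 3.4 of \cite{F-N-P-2001}, together with the compact embedding of $L^{2\gamma/(\gamma+1)}(\mathcal{O})$ into $H^{-1}(\mathcal{O})$ (which holds because $\gamma>\tfrac{3}{2}$ gives $\tfrac{2\gamma}{\gamma+1}>\tfrac{6}{5}$), yields strong convergence of the commutator in $C([0,T];H^{-1}(\mathcal{O}))$, and pairing with $u_\delta\rightharpoonup u$ in $L^2(0,T;H^1_0(\mathcal{O}))$ closes the argument.
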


\subsection{Renormalized solutions}
The following lemma implies that $T_{k}(\rho)-\overline{T_{k}(\rho)} \in L^{2}(\mathcal{O}_{T})$,
which
helps us establish that the limit function $(\rho, u)$ is a renormalized solution.

\begin{Lemma}[The amplitude of oscillations]\label{oscillation}
There exists a constant $C$, independent of $k$, such that
\be
\limsup_{\dl\to 0}\|T_{k}(\rho_{\dl})-T_{k}(\rho)\|_{L^{\gamma +1}(\mathcal{O}_{T})}\leq C.
\en
\end{Lemma}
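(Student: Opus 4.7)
The plan combines three ingredients: a pointwise algebraic inequality dominating $|T_k(a)-T_k(b)|^{\gamma+1}$, the effective viscous flux identity of Lemma~\ref{e-v-f-dl}, and a Cauchy--Schwarz closure exploiting the embedding $L^{\gamma+1}(\mathcal{O}_T)\hookrightarrow L^{2}(\mathcal{O}_T)$ (available because $\gamma+1>2$ on a bounded set). First I would establish that, for all $a,b\geq 0$,
$$
|T_k(a)-T_k(b)|^{\gamma+1}\leq\bigl(a^\gamma-b^\gamma\bigr)\bigl(T_k(a)-T_k(b)\bigr),
$$
which reduces by symmetry to the case $a\geq b$, where the $1$-Lipschitz and nondecreasing character of $T_k$ yields $(T_k(a)-T_k(b))^\gamma\leq(a-b)^\gamma$, and the elementary monotonicity bound $(a-b)^\gamma\leq a^\gamma-b^\gamma$ for $a\geq b\geq 0$, $\gamma\geq 1$ (a one-line $f(a)\geq 0$ argument) completes it.

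Applying this pointwise inequality with $a=\rho_\dl$, $b=\rho$, integrating, and passing to $\limsup_{\dl\to 0}$ using $\rho_\dl^\gamma\rightharpoonup\overline{\rho^\gamma}$ in $L^{(\gamma+\theta)/\gamma}(\mathcal{O}_T)$ (from Lemma~\ref{uniform-rho-dl}) together with $T_k(\rho_\dl)\overset{*}{\rightharpoonup}\overline{T_k(\rho)}$ in $L^\infty(\mathcal{O}_T)$, expanding the four product terms gives
$$
\limsup_{\dl\to 0}\int_{\mathcal{O}_T}|T_k(\rho_\dl)-T_k(\rho)|^{\gamma+1}\,\md x\md t\,\leq\,\int_{\mathcal{O}_T}\bigl(\overline{\rho^\gamma T_k(\rho)}-\overline{\rho^\gamma}\,\overline{T_k(\rho)}\bigr)\md x\md t,
$$
after dropping the nonnegative ``Jensen defect'' $(\overline{\rho^\gamma}-\rho^\gamma)(T_k(\rho)-\overline{T_k(\rho)})$, whose positivity comes from convexity of $z\mapsto z^\gamma$ (so $\overline{\rho^\gamma}\geq\rho^\gamma$ a.e.) and concavity of $T_k$ (so $T_k(\rho)\geq\overline{T_k(\rho)}$ a.e.). Lemma~\ref{e-v-f-dl}, extended to $\psi,\phi\equiv 1$ by a standard truncation/density argument (justified by the $\dl$-uniform $L^{\gamma+\theta}$ bound on $\rho_\dl$ and the $L^\infty$ bound on $T_k$), then identifies the right-hand side with $(\nu+2\mu)\int_{\mathcal{O}_T}\bigl(\overline{\mathrm{div}\,u\,T_k(\rho)}-\mathrm{div}\,u\,\overline{T_k(\rho)}\bigr)\md x\md t$.

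To close, I rewrite this last integral as the weak-convergence limit
$$
\lim_{\dl\to 0}\int_{\mathcal{O}_T}\bigl(\mathrm{div}\,u_\dl-\mathrm{div}\,u\bigr)\bigl(T_k(\rho_\dl)-T_k(\rho)\bigr)\md x\md t
$$
and bound it by Cauchy--Schwarz together with the $\dl$-uniform bound $\|\mathrm{div}\,u_\dl\|_{L^2(\mathcal{O}_T)}\leq C$ from \eqref{u-dl-h1} and the embedding $L^{\gamma+1}(\mathcal{O}_T)\hookrightarrow L^2(\mathcal{O}_T)$, yielding at most $C\,\limsup_{\dl\to 0}\|T_k(\rho_\dl)-T_k(\rho)\|_{L^{\gamma+1}(\mathcal{O}_T)}$. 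Writing $L_k:=\limsup_{\dl\to 0}\|T_k(\rho_\dl)-T_k(\rho)\|_{L^{\gamma+1}(\mathcal{O}_T)}$, the whole chain reads $L_k^{\gamma+1}\leq C L_k$, whence $L_k\leq C^{1/\gamma}$ uniformly in $k$, which is the asserted bound. The main obstacle lies not in any single estimate but in the orchestration of three distinct weak limits---$\overline{\rho^\gamma T_k(\rho)}$, $\overline{T_k(\rho)}$, and $\overline{\mathrm{div}\,u\,T_k(\rho)}$---and in the essential use of \emph{one-sided} information (the sign of the Jensen defect and the monotonicity-driven algebraic inequality); without either piece of one-sided control, the self-referential bound $L_k^{\gamma+1}\lesssim L_k$ simply would not close.
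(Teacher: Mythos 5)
Your proof is correct and follows the same route the paper points to (Lemma 4.3 in Feireisl--Novotn\'y--Petzeltov\'a): the pointwise bound $|T_k(a)-T_k(b)|^{\gamma+1}\leq(a^\gamma-b^\gamma)(T_k(a)-T_k(b))$, the discarded nonnegative Jensen defect from convexity of $z^\gamma$ and concavity of $T_k$, the effective viscous flux identity of Lemma~\ref{e-v-f-dl} to swap $\overline{\rho^\gamma T_k(\rho)}-\overline{\rho^\gamma}\,\overline{T_k(\rho)}$ for the divergence term, and the Cauchy--Schwarz closure via $L^{\gamma+1}(\mathcal{O}_T)\hookrightarrow L^{2}(\mathcal{O}_T)$ giving $L_k^{\gamma+1}\leq C L_k$. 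Since each $L_k$ is finite ($T_k$ is bounded), the conclusion $L_k\leq C^{1/\gamma}$ follows, uniformly in $k$, exactly as in the cited reference.
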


The proof of this lemma is the same as that for Lemma 4.3 of  Subsection 4.4 in \cite{F-N-P-2001}.

\begin{Remark} By the concavity of the norm, we know from Lemma \ref{oscillation} that
\be
\|\overline{T_{k}(\rho)}-T_{k}(\rho)\|_{L^{\gamma +1}(\mathcal{O}_{T})}\leq C.
\en
From the proof of Lemma \ref{oscillation}, we have
\be
0\leq \limsup_{\dl\to 0}\|T_{k}(\rho_{\dl})-T_{k}(\rho)\|^{\gamma +1}_{L^{\gamma +1}(\mathcal{O}_{T})}\leq \lim_{\dl \to 0}\int_{0}^{T}\int_{\mathcal{O}_{T}}\big(\rho_{\dl}^{\gamma}T_{k}(\rho_{\dl})-\overline{\rho^{\gamma}}\,\overline{T_{k}(\rho)}\big)\md x \md t.
\en
\end{Remark}

Based on the uniform estimate of the amplitude of oscillations in Lemma \ref{oscillation}, we see that
the limit function $(\rho, u)$ satisfies (\ref{rho-dl-2}) in the renormalized sense.

\begin{Lemma}\label{renorm-solu-rho-u}
The limit function $(\rho, u)$ is a renormalized solution to \eqref{rho-dl-2}{\rm ;} that is,
\be\label{renorm-rho-final}
\del_{t}g(\rho)+\mathrm{div}(g(\rho)u)+\big(g'(\rho)\rho -g(\rho)\big)\mathrm{div}\, u=0 \qquad \mbox{in} \,\,\, \mathcal{D}^{'}((0, T)\times \R^{3})
\en
for any $g\in C^{1}(\R)$ with the property $g'(z)\equiv 0$ when $z\geq M$ for sufficiently large constant $M$, provided $(\rho, u)$ are prolonged zero outside $\mathcal{O}$.
\end{Lemma}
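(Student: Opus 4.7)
The plan follows the DiPerna--Lions regularization scheme, as adapted by Feireisl--Novotn\'y--Petzeltov\'a, leveraging the oscillation estimate of Lemma \ref{oscillation} to compensate for the fact that the uniform bound $\rho \in L^{\gamma+\theta}(\mathcal{O}_T)$ from Lemma \ref{uniform-rho-dl} does not directly place $\rho$ in $L^2(\mathcal{O}_T)$ when $\gamma$ is close to $\tfrac{3}{2}$. This obstructs a direct application of Lemma \ref{suff-cond-renorm-solu} to the limit equation \eqref{rho-dl-2}. The standard remedy is to work at the level of the truncation $T_k(\rho)$, where the $L^\infty$-bound is automatic, and then let $k\to\infty$.

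First I would exploit that $(\rho_\delta, u_\delta)$ is itself a renormalized solution of \eqref{rho-dl-1} after extension by zero to $\mathbb{R}^3$. Applying the renormalization with $g=T_k$ produces \eqref{renorm-rho-dl-trunc}, and then the weak convergence results collected earlier in this section allow passing $\delta\to 0$ to obtain
\begin{equation*}
\partial_t \overline{T_k(\rho)} + \mathrm{div}\big(\overline{T_k(\rho)}\,u\big) + \overline{\big(T_k'(\rho)\rho - T_k(\rho)\big)\mathrm{div}\,u} = 0 \qquad \text{in } \mathcal{D}'((0,T)\times\mathbb{R}^3).
\end{equation*}
Since $\overline{T_k(\rho)}\in L^\infty(\mathcal{O}_T)\cap L^2(\mathcal{O}_T)$ and $u\in L^2(0,T;H^1_0(\mathcal{O}))$, a spatial mollification together with the DiPerna--Lions commutator lemma then applies to yield, for every $h\in C^1(\mathbb{R})$ with $h'$ bounded and of compact support,
\begin{equation*}
\partial_t h(\overline{T_k(\rho)}) + \mathrm{div}\big(h(\overline{T_k(\rho)})\,u\big) + \big(h'(\overline{T_k(\rho)})\overline{T_k(\rho)} - h(\overline{T_k(\rho)})\big)\mathrm{div}\,u = -h'(\overline{T_k(\rho)})\,\overline{\big(T_k'(\rho)\rho - T_k(\rho)\big)\mathrm{div}\,u}.
\end{equation*}

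Next I would pass $k\to\infty$. On the left, the pointwise identity $T_k(z)\to z$ combined with Lemma \ref{oscillation} and the uniform bound $\rho\in L^{\gamma+\theta}(\mathcal{O}_T)$ forces $\overline{T_k(\rho)}\to\rho$ strongly in $L^p(\mathcal{O}_T)$ for every $1\leq p<\gamma+\theta$, so all the $h$-nonlinear terms converge to their counterparts with $\rho$. On the right, the pointwise bound $|T_k'(z)z-T_k(z)|\leq C|z|\mathbf{1}_{\{z>k\}}$, the uniform bound $\rho_\delta\in L^{\gamma+\theta}(\mathcal{O}_T)$, and a Chebyshev-type argument yield $\|T_k'(\rho_\delta)\rho_\delta - T_k(\rho_\delta)\|_{L^q(\mathcal{O}_T)}\to 0$ as $k\to\infty$ uniformly in $\delta$, for any $q<\gamma+\theta$. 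Pairing with $\mathrm{div}\,u\in L^2(\mathcal{O}_T)$ (taking $q>2$ when $\gamma+\theta>2$, and using an interpolation/Orlicz estimate in the borderline case) shows the remainder vanishes in $L^1(\mathcal{O}_T)$, producing \eqref{renorm-rho-final} for every admissible $g$ and completing the zero extension outside $\mathcal{O}$ via $u|_{\partial\mathcal{O}}=0$.

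The main obstacle is exactly the control of the defect $\overline{(T_k'(\rho)\rho-T_k(\rho))\mathrm{div}\,u}$ as $k\to\infty$: without a quantitative bound on $\overline{T_k(\rho)}-T_k(\rho)$ one cannot guarantee that the weak--weak product in the remainder does not produce a residual defect measure. Lemma \ref{oscillation} supplies precisely this control, namely $\|\overline{T_k(\rho)}-T_k(\rho)\|_{L^{\gamma+1}(\mathcal{O}_T)}\leq C$ independently of $k$, which is what makes the DiPerna--Lions regularization compatible with the passage $k\to\infty$ and closes the renormalized identity at the limit level.
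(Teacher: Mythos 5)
Your architecture is the same as the one the paper invokes (Subsection 4.5 of Feireisl--Novotn\'y--Petzeltov\'a): renormalize at the level of the bounded truncations $\overline{T_{k}(\rho)}$ via the DiPerna--Lions commutator lemma, then let $k\to\infty$ using Lemma \ref{oscillation}. The skeleton is right, but the one step that actually closes the argument --- the vanishing of the remainder $h'(\overline{T_{k}(\rho)})\,\overline{(T'_{k}(\rho)\rho-T_{k}(\rho))\,\mathrm{div}\,u}$ --- is not carried out correctly. Your stated estimate, $\|T'_{k}(\rho_{\dl})\rho_{\dl}-T_{k}(\rho_{\dl})\|_{L^{q}}\to 0$ uniformly in $\dl$ for every $q<\gamma+\theta$, is true but cannot be paired with $\mathrm{div}\,u_{\dl}\in L^{2}$ unless $q\geq 2$, and here $\theta<\min\{\tfrac14,\tfrac{2\gamma}{3}-1\}$, so $\gamma+\theta<2$ for every $\gamma\in(\tfrac32,\tfrac74)$ --- i.e.\ precisely in the regime the theorem is designed to cover. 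Deferring this to ``an interpolation/Orlicz estimate in the borderline case'' leaves the essential point unproved.

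The working argument is: since $h'$ vanishes for arguments $\geq M$, one only needs the remainder on the set $\{\overline{T_{k}(\rho)}\leq M\}$; there, Lemma \ref{oscillation} gives
\begin{equation*}
\|T'_{k}(\rho_{\dl})\rho_{\dl}-T_{k}(\rho_{\dl})\|_{L^{\gamma+1}(\{\overline{T_{k}(\rho)}\leq M\})}
\leq C\big(\|T_{k}(\rho_{\dl})-T_{k}(\rho)\|_{L^{\gamma+1}}+\|T_{k}(\rho)-\overline{T_{k}(\rho)}\|_{L^{\gamma+1}}+M\big)\leq C(M)
\end{equation*}
uniformly in $k$ and $\dl$, whereas without the restriction to this set the $L^{\gamma+1}$-norm of $T_{k}(\rho_{\dl})$ grows with $k$ (one only has $\rho\in L^{\gamma+\theta}$, not $L^{\gamma+1}$). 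Interpolating this uniform $L^{\gamma+1}$-bound against the $L^{1}$-smallness $\|T'_{k}(\rho_{\dl})\rho_{\dl}-T_{k}(\rho_{\dl})\|_{L^{1}}\leq Ck^{1-\gamma}\to 0$ yields smallness in $L^{2}$ --- valid because $\gamma+1>2$ holds for all $\gamma>\tfrac32$, unlike $\gamma+\theta>2$ --- and only then can one pair with $\mathrm{div}\,u_{\dl}\in L^{2}(\mathcal{O}_{T})$ and use weak lower semicontinuity to conclude that the remainder tends to $0$ in $L^{1}$ as $k\to\infty$. So the oscillation lemma is needed twice: once (as you note) for the strong convergence $\overline{T_{k}(\rho)}\to\rho$, and once, quantitatively and on the support of $h'$, inside the interpolation that kills the remainder; your write-up names the second use but replaces it with an estimate that does not suffice.
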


The detailed proof can be found in Subsection 4.5 of \cite{F-N-P-2001} for Lemma 4.4 there.

\subsection{Strong convergence of density $\rho_{\dl}$}
We now give an outline of the proof for the strong convergence of density $\rho_{\dl}$, {\it i.e.}, $\overline{\rho^{\gamma}}=\rho^{\gamma}$,
where $\rho_{\dl}^{\gamma}\to \overline{\rho^{\gamma}}$ in  $L^{\frac{\gamma +\theta}{\gamma}}(\mathcal{O}_{T})$
for the completeness of the proof of Theorem \ref{main-thm}.

Introduce a family of function in $C^{1}(\R^{+})\cap C[0, \infty)$:
\begin{align*}
L_{k}(z)=\begin{cases}
z \log z \quad &\mbox{for $0\leq z<k$},\\
z\log k +z\int_{k}^{z}\frac{T_{k}(s)}{s^{2}}\md s &\mbox{for $z\geq k$}.
\end{cases}
\end{align*}
By the construction of $L_{k}$, we know that $L_{k}$ is a linear function for large $z$. In particular, we see that, for $z\geq 3k$,
\begin{equation*}
L_{k}(z)=\b_{k}z-2k
\end{equation*}
with
\begin{equation*}
\b_{k}=\log k +\int_{k}^{3k}\frac{T_{k}(s)}{s^{2}}\md s +\frac{2}{3}.
\end{equation*}
Then, if $g_{k}(z):=L_{k}(z)-\b_{k}z$, we obtain that
$g_{k}(z)\in C^{1}(\R^{+})\cap C[0, \infty)$, $g'_{k}(z)=0$ for $z$ is sufficiently large, and
\begin{equation*}
g_{k}'(z)z-g_{k}(z)=T_{k}(z).
\end{equation*}

By Proposition \ref{existence-dl} and Lemma \ref{renorm-solu-rho-u}, we know that $(\rho_{\dl}, u_{\dl})$ and $(\rho, u)$
are renormalized solutions to (\ref{rho-dl-2}). Then we substitute function $g$ by $g_{k}$ in the definition of renormalized solutions
and
take the difference of these two equations to obtain
\be\label{difference-renorm-solu}
\begin{split}
\del_{t}\big(L_{k}(\rho_{\dl})-L_{k}(\rho)\big)
+\mathrm{div}\big(L_{k}(\rho_{\dl})u_{\dl}-L_{k}(\rho)u\big)
+T_{k}(\rho_{\dl})\mathrm{div} u_{\dl}-T_{k}(\rho)\mathrm{div}\, u=0
\end{split}
\en
in $\mathcal{D}^{'}((0, T)\times \R^{3})$.

Since $L_{k}$ is linear when $z$ is large,
$L_{k}(\rho_{\dl})$ is uniformly bounded with respect to $\dl$ in $L^{\infty}(0, T; L^{\gamma}(\mathcal{O}))$
so that
\begin{equation*}
\begin{split}
L_{k}(\rho_{\dl})\overset{*}\rightharpoonup\, \overline{L_{k}(\rho)} \qquad \mbox{in } L^{\infty}_{t}L^{\gamma}_{x} \,\,\, \mbox{as } \dl \to 0.
\end{split}
\end{equation*}
Moreover, since $L_{k}(\rho_{\dl})$ is a renormalized solution, similarly as before, we have
\be\label{strong-conv-Lk-tho-dl}
\begin{split}
L_{k}(\rho_{\dl})\to \overline{L_{k}(\rho)} \qquad \mbox{in } C([0, T], L^{\gamma}_{\rm weak}(\mathcal{O}))\cap C([0, T]; H^{-1}(\mathcal{O})) \,\,\,\, \mbox{as } \dl \to 0.
\end{split}
\en
Now, using function $\phi(x)\in \mathcal{D}(\mathcal{O})$ to test (\ref{difference-renorm-solu}) and then integrate over $(0, t)$, we have
\begin{align*}
&\int_{\mathcal{O}}\big(L_{k}(\rho_{\dl})-L_{k}(\rho)\big)(t, x)\phi (x)\, \md x\\
&=\,\int_{\mathcal{O}}\big(L_{k}(\rho_{0, \dl})-L_{k}(\rho_{0})\big)\phi\, \md x
  +\int_{0}^{t}\int_{\mathcal{O}}\big(L_{k}(\rho_{\dl})u_{\dl}-L_{k}(\rho)u\big)\cdot \na \phi\, \md x \md s\\
 &\quad -\int_{0}^{t}\int_{\mathcal{O}}\big(T_{k}(\rho_{\dl})\mathrm{div} u_{\dl}-T_{k}(\rho)\mathrm{div}\, u\big)\phi\, \md x \md s.
\end{align*}
Sending $\dl \to 0$, we have
\begin{equation*}
\begin{split}
&\int_{\mathcal{O}}\big(\overline{L_{k}(\rho)}-L_{k}(\rho)\big)(t, x)\phi (x)\, \md x\\
&=
\int_{0}^{t}\int_{\mathcal{O}}\big(\overline{L_{k}(\rho)}-L_{k}(\rho)\big)u \cdot \na \phi\, \md x \md s
-\lim_{\dl \to 0}\int_{0}^{t}\int_{\mathcal{O}}\big(T_{k}(\rho_{\dl})\mathrm{div} u_{\dl}-T_{k}(\rho)\mathrm{div}\, u\big)\phi\, \md x \md s.
\end{split}
\end{equation*}
Taking $\phi =\phi_{m}$ with $\phi_{m}(z)\to 1_{\mathcal{O}}(z)$ in the above equation and sending  $m\to \infty$, we have
\be\label{overline-Lk-tho-dl-conv}
\int_{\mathcal{O}}\big(\overline{L_{k}(\rho)}-L_{k}(\rho)\big)(t)\, \md x
=\int_{0}^{t}\int_{\mathcal{O}}T_{k}(\rho)\,\mathrm{div}\, u\,\md x \md s
-\lim_{\dl \to 0}\int_{0}^{t}\int_{\mathcal{O}}T_{k}(\rho_{\dl})\mathrm{div} u_{\dl}\, \md x \md s.
\en
Then, using Lemmas \ref{e-v-f-dl}--\ref{oscillation}, we find that the right-hand side of the above equation
is non-positive, which yields
\be\label{overline-Lk-rho-dl-limit}
\lim_{k\to \infty}\int_{\mathcal{O}}\big(\overline{L_{k}(\rho)}-L_{k}(\rho)\big)(t)\, \md x
\leq 0 \qquad \mbox{for $t \in [0, T]$}.
\en
Moreover, by the definition of $L_{k}$ and the absolution continuity of $\rho\,\log \rho \in L^{1}(\mathcal{O}_{T})$, we have
\be
\begin{split}
\|L_{k}(\rho)-\rho \log \rho\|_{L^{1}(\mathcal{O}_{T})}&\leq \int \int_{\{\rho \geq k\}}|L_{k}(\rho)-\rho \log \rho|\,\md x \md t\\
&\leq C\int \int_{\{\rho \geq k\}}|\rho \log \rho|\,\md x \md t \to 0 \qquad \mbox{as } k\to \infty.
\end{split}
\en
Similarly, we have
\begin{align*}
\|L_{k}(\rho_{\dl})-\rho_{\dl}\log \rho_{\dl}\|_{L^{1}(\mathcal{O}_{T})}&\leq \int \int_{\{\rho_{\dl} \geq k\}}|L_{k}(\rho_{\dl})-\rho_{\dl} \log \rho_{\dl}|\,\md x \md t\\
&\leq \int \int_{\{\rho_{\dl} \geq k\}}\frac{\log k +\int_{k}^{\rho_{\dl}}\frac{T_{k}(s)}{s^{2}}\md s+\log \rho_{\dl}}{\rho_{\dl}^{\gamma-1}}\rho_{\dl}^{\gamma}\,\md x \md t\\
&\leq C(\ve)k^{1+\ve-\gamma}\int \int_{\{\rho_{\dl} \geq k\}}\rho_{\dl}^{\gamma}\,\md x \md t \to 0 \qquad \mbox{as } k\to \infty.
\end{align*}
This, together with the lower-semicontinuity of the norm, we have
\be\label{overline-Lk-rho-l1-limit}
\|\overline{L_{k}(\rho)}-\overline{\rho \log \rho }\|_{L^{1}(\mathcal{O}_{T})}\leq \liminf_{\dl \to 0}\|L_{k}(\rho_{\dl})-\rho_{\dl}\log \rho_{\dl}\|_{L^{1}(\mathcal{O}_{T})} \to 0, \quad \mbox{as }k\to \infty.
\en
Finally, combining (\ref{overline-Lk-rho-dl-limit})$-$(\ref{overline-Lk-rho-l1-limit}), we have
\begin{equation*}
\begin{split}
\int_{\mathcal{O}}\big(\overline{\rho \log \rho}-\rho \log \rho\big)(t)\md x \leq 0.
\end{split}
\end{equation*}
Moreover, since $\overline{\rho \log \rho}\geq \rho \log \rho$, we see that $\overline{\rho \log \rho}=\rho \log \rho$ for {\it a.e.} $(t,x)\in\mathcal{O}_{T}$.
In addition, by the restrict concavity of function $z \log z$, we have
\begin{equation*}
\rho_{\dl}\to \rho \qquad\,\, \mbox{in }L^{p}(\mathcal{O}_{T}) \,\,\, \mbox{for any $p\in [1, \gamma +\theta)$}.
\end{equation*}
Then we conclude
\begin{equation*}
\overline{\rho^{\gamma}}=\rho^{\gamma}, \quad \mbox{\it a.e.}
\end{equation*}
Therefore, we complete the proof of Theorem \ref{main-thm}.

\bigskip

\appendix

\section{Preliminaries}\label{appendix-a}

In this appendix, we collect some important theories and lemmas
that we use extensively in this paper.

In order to deal with the highest derivatives of $u$ in (\ref{velocity})
(and the corresponding ones in the approximation systems) and the highest derivatives of $Q$ in (\ref{Q})
(and the corresponding ones in the approximation systems), we need the following lemma whose proof can be found
in the proof of Lemma A.1 in \cite{C-M-W-Z}.

\begin{Lemma}\label{estimate-matrix}
Let $Q$ and $Q'$ be two $3\times 3$ symmetric matrices,
and let $\Omega =\frac{1}{2}\big(\na u-(\na u)^{\top}\big)$ be the vorticity with $(\na u)_{\a \b}=\del_{\b}u_{\a}$.
Then
\begin{equation*}
(\O Q'-Q' \O, \D Q)-(\na \cdot (Q' \D Q-\D Q Q'), u)=0.
\end{equation*}
\end{Lemma}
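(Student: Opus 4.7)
The plan is to reduce the identity to a pointwise algebraic statement and then use the symmetry/antisymmetry structure together with the cyclic property of the trace. Throughout, I use the inner product $(A,B)=\int_{\mathcal{O}}A^{ij}B^{ij}\,\md x=\int_{\mathcal{O}}\tr(A^{\top}B)\,\md x$.

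First I would integrate by parts in the second term. Since $u|_{\partial\mathcal{O}}=0$,
\[
\bigl(\na\cd(Q'\D Q-\D Q\,Q'),u\bigr)
=-\int_{\mathcal{O}}\bigl(Q'\D Q-\D Q\,Q'\bigr)^{ij}\,\del_{j}u^{i}\,\md x.
\]
Next I would decompose $\del_{j}u^{i}=\Omega^{ij}+S^{ij}$ with the symmetric part $S^{ij}=\tfrac{1}{2}(\del_{j}u^{i}+\del_{i}u^{j})$. Because $Q'$ and $\D Q$ are symmetric $3\times 3$ matrices, the commutator $[Q',\D Q]=Q'\D Q-\D Q\,Q'$ is antisymmetric:
\[
\bigl(Q'\D Q-\D Q\,Q'\bigr)^{\top}=\D Q\,Q'-Q'\D Q=-\bigl(Q'\D Q-\D Q\,Q'\bigr).
\]
Contracting an antisymmetric matrix with the symmetric matrix $S$ gives zero, so only the antisymmetric part $\Omega$ survives:
\[
\bigl(\na\cd(Q'\D Q-\D Q\,Q'),u\bigr)
=-\int_{\mathcal{O}}\bigl(Q'\D Q-\D Q\,Q'\bigr)^{ij}\Omega^{ij}\,\md x.
\]
Using again the antisymmetry of the commutator together with the cyclic property of the trace, this rewrites as
\[
-\int_{\mathcal{O}}\tr\!\bigl((Q'\D Q-\D Q\,Q')^{\top}\Omega\bigr)\md x
=\int_{\mathcal{O}}\tr\!\bigl(\Omega(Q'\D Q-\D Q\,Q')\bigr)\md x.
\]

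The remaining step is to show that the first term of the identity reduces to the same expression. Writing
\[
(\Omega Q'-Q'\Omega,\D Q)=\int_{\mathcal{O}}\bigl[\Omega^{ik}(Q')^{kj}-(Q')^{ik}\Omega^{kj}\bigr](\D Q)^{ij}\,\md x,
\]
and using the symmetry $(\D Q)^{ij}=(\D Q)^{ji}$, each term is a trace:
\[
(\Omega Q',\D Q)=\int_{\mathcal{O}}\tr(\Omega Q'\D Q)\,\md x,
\qquad
(Q'\Omega,\D Q)=\int_{\mathcal{O}}\tr(Q'\Omega\D Q)\,\md x=\int_{\mathcal{O}}\tr(\Omega\D Q\,Q')\,\md x,
\]
the last equality by cyclicity of the trace. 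Subtracting gives precisely $\int_{\mathcal{O}}\tr\!\bigl(\Omega(Q'\D Q-\D Q\,Q')\bigr)\md x$, matching the expression obtained above, so the difference in the statement vanishes.

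The identity is essentially algebraic, so there is no real obstacle; the only care needed is keeping the index conventions consistent, in particular the convention $(\na u)_{\a\b}=\del_{\b}u_{\a}$ (so that $\Omega^{ij}=\tfrac{1}{2}(\del_{j}u^{i}-\del_{i}u^{j})$), and making sure that the boundary term from integration by parts vanishes, which is guaranteed by $u|_{\partial\mathcal{O}}=0$. No regularity beyond what is already assumed for smooth solutions is required.
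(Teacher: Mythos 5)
Your proof is correct: the decomposition of $\na u$ into $\Omega$ plus its symmetric part, the antisymmetry of the commutator $Q'\D Q-\D Q\,Q'$ of two symmetric matrices, and cyclicity of the trace reduce both terms to $\int_{\mathcal{O}}\tr\bigl(\Omega(Q'\D Q-\D Q\,Q')\bigr)\,\md x$, and your index conventions are consistent with the paper's. The paper itself gives no proof here — it only cites Lemma A.1 of \cite{C-M-W-Z} — and your argument is the standard one intended there.
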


\begin{Lemma}[Aubin-Lions lemma \cite{A-1963}]\label{al-lemma}
Let $X_{0}, X$, and $X_{1}$ be three Banach spaces with $X_{0}\subseteq X \subseteq X_{1}$, $X_{0}$ compactly embedded in $X$, and $X$ continuously embedded in $X_{1}$.
For $1\leq p, q \leq \infty$, let
\begin{equation*}
W=\{ u\in L^p (0, T; X_{0}) \;:\; \dot{u}\in L^q (0, T; X_{1})\}.
\end{equation*}
Then
\begin{enumerate}
\item[(i)] If $p<\infty$, then the embedding of $W$ into $L^p (0, T; X)$ is compact{\rm ;}
\item[(ii)]  If $p=\infty$ and $q>1$, then the embedding of $W$ into $C(0, T; X)$ is compact.
\end{enumerate}
\end{Lemma}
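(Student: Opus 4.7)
The plan is to prove the Aubin-Lions lemma by reducing the problem to two ingredients: (a) Ehrling's interpolation inequality, which quantifies the compact embedding $X_0 \hookrightarrow\hookrightarrow X$, and (b) the Fréchet-Kolmogorov compactness criterion in $L^p(0,T;X_1)$, which converts a uniform bound on $\dot u$ into equi-continuity in time.

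First, I would establish Ehrling's lemma: for every $\varepsilon > 0$ there exists $C_\varepsilon > 0$ such that
\begin{equation*}
\|v\|_{X} \leq \varepsilon \|v\|_{X_0} + C_\varepsilon \|v\|_{X_1} \qquad \text{for all } v \in X_0.
\end{equation*}
This is a standard contradiction argument: if it failed, one could find a sequence $\{v_n\}\subset X_0$ with $\|v_n\|_X = 1$, $\|v_n\|_{X_0}$ bounded, and $\|v_n\|_{X_1} \to 0$. The compact embedding $X_0 \hookrightarrow\hookrightarrow X$ extracts a subsequence converging in $X$, whose limit must be both of unit norm in $X$ and zero in $X_1$, contradicting the continuous embedding $X \hookrightarrow X_1$.

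For part (i), let $\{u_n\}$ be bounded in $W$ with bound $M$, and take any $\varepsilon > 0$. Applying Ehrling's inequality pointwise in $t$ and raising to the $p$-th power (or taking suprema if $p=\infty$), we obtain
\begin{equation*}
\|u_n - u_m\|_{L^p(0,T;X)} \leq \varepsilon \|u_n - u_m\|_{L^p(0,T;X_0)} + C_\varepsilon \|u_n - u_m\|_{L^p(0,T;X_1)} \leq 2\varepsilon M + C_\varepsilon \|u_n - u_m\|_{L^p(0,T;X_1)}.
\end{equation*}
Hence it suffices to show that $\{u_n\}$ is precompact in $L^p(0,T;X_1)$. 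For this I would apply the Fréchet-Kolmogorov criterion. The uniform bound in $L^p(0,T;X_0)$ together with the continuous embedding $X_0 \hookrightarrow X_1$ gives uniform $L^p(0,T;X_1)$ boundedness. For equi-continuity in time, for $h>0$ small,
\begin{equation*}
\|u_n(t+h) - u_n(t)\|_{X_1} \leq \int_t^{t+h} \|\dot u_n(s)\|_{X_1}\, \md s \leq h^{1-1/q}\,\|\dot u_n\|_{L^q(0,T;X_1)},
\end{equation*}
(with the obvious modification when $q = 1$ or $q = \infty$), so the translations tend to zero in $L^p(0,T;X_1)$ uniformly in $n$. A diagonal argument along $\varepsilon \to 0$ yields a Cauchy subsequence in $L^p(0,T;X)$.

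For part (ii), with $p = \infty$ and $q > 1$, the estimate above becomes
\begin{equation*}
\|u_n(t+h) - u_n(t)\|_{X_1} \leq h^{1-1/q}\,\|\dot u_n\|_{L^q(0,T;X_1)},
\end{equation*}
so $\{u_n\}$ is equi-Hölder continuous into $X_1$ with exponent $1-1/q > 0$, while $\{u_n(t)\}$ is bounded in $X_0$ uniformly in $t$ and $n$, hence relatively compact in $X$. Ehrling's inequality applied to $u_n(t+h) - u_n(t)$ promotes the $X_1$-equi-continuity to equi-continuity in $X$. A diagonal extraction on a countable dense subset of $[0,T]$, combined with Arzelà-Ascoli in $C([0,T];X)$, produces the required convergent subsequence.

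The main obstacle I anticipate is the Fréchet-Kolmogorov step in part (i): one must be careful to handle the boundary in $t$ (using a reflection or zero extension) and to treat the endpoint cases $q=1$ and $p=\infty$ separately, since Hölder's inequality in time degenerates. Everything else is routine manipulation once Ehrling's lemma is in place.
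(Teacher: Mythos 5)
The paper offers no proof of this lemma: it is quoted verbatim as a classical result with a citation to Aubin \cite{A-1963} (see also Simon \cite{S-1986}), so there is no in-paper argument to compare against. Your sketch is the standard and correct proof: Ehrling's interpolation inequality (with the correct contradiction argument exploiting the compactness of $X_0\hookrightarrow X$ and the injectivity of $X\hookrightarrow X_1$) reduces compactness in $L^p(0,T;X)$ to compactness in $L^p(0,T;X_1)$, which is then obtained from the bound on $\dot u$ via time translations; and for $p=\infty$, $q>1$ the same two ingredients feed into Arzel\`a--Ascoli. Two small points deserve the care you already flag: in part (i) with $q=1$ the pointwise H\"older bound $h^{1-1/q}$ degenerates and one must instead estimate $\int_0^{T-h}\|u(t+h)-u(t)\|_{X_1}\,\md t\leq h\|\dot u\|_{L^1(0,T;X_1)}$ by Fubini and then interpolate with the $L^p(0,T;X_0)$ bound; and in part (ii) one should first pass to the continuous $X_1$-valued representative of $u_n$ before asserting that $\|u_n(t)\|_{X_0}\leq M$ for \emph{every} $t$, which is needed for the pointwise relative compactness in $X$. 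With those routine repairs the argument is complete.
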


\begin{Lemma}[Gagliardo-Nirenberg interpolation inequality \cite{N-1955}]\label{gn-inequality}
Let $1\leq q, r \leq \infty$ and  $0\leq j<m$.
Then the following inequalities hold{\rm :}
$$
\|D^{j}u\|_{L^p}\leq C_{1} \|D^{m}u\|_{L^r}^{a}\|u\|_{L^q}^{1-a}+C_{2}\|u\|_{L^{s}}
$$
for any function $u: \mathcal{O}\to \R$ defined on a bounded Lipschitz
domain $\mathcal{O}\subseteq \R^{3}$,
where
$$
\frac{1}{p}=\frac{j}{3}+a(\frac{1}{r}-\frac{m}{3})+(1-a)\frac{1}{q},
\qquad \,\,\frac{j}{m}\leq a \leq 1,
$$
$s>0$ is arbitrary, and $C_{1}$ and $C_{2}$ depend only on $\mathcal{O}$ and $m$.
\end{Lemma}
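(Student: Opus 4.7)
The plan is to reduce the bounded-domain statement to an analogous inequality on $\R^3$ via a Calder\'on-Stein type extension operator, and then to prove the scale-invariant version on the whole space by a Fourier/Littlewood-Paley decomposition combined with a dilation argument. Since $\mathcal{O}$ is a bounded Lipschitz domain, there exists a bounded linear extension $E: W^{m,r}(\mathcal{O}) \to W^{m,r}(\R^3)$ with compact support, satisfying $\|Eu\|_{L^s(\R^3)} + \|D^m Eu\|_{L^r(\R^3)} \leq C \big(\|u\|_{L^s(\mathcal{O})} + \|D^m u\|_{L^r(\mathcal{O})}\big)$. Once the inequality is proved on $\R^3$, restricting to $\mathcal{O}$ gives the bounded-domain form, with the additive term $C_2 \|u\|_{L^s}$ absorbing the lower-order contribution from the extension operator (this term is needed because, on a bounded domain, $\|D^m u\|_{L^r}$ alone is not a norm on $W^{m,r}$).

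Second, on $\R^3$ I would establish the scale-invariant inequality
\[
\|D^j u\|_{L^p(\R^3)} \leq C \|D^m u\|_{L^r(\R^3)}^{a}\|u\|_{L^q(\R^3)}^{1-a}
\]
for Schwartz functions $u$, using the Littlewood-Paley decomposition $u = \sum_k \Delta_k u$. One bounds $\|D^j \Delta_k u\|_{L^p}$ via Bernstein's inequality in two ways (using either $\|D^m \Delta_k u\|_{L^r}$ with a factor $2^{k(j-m)}$ times a Bernstein/H\"older loss, or $\|\Delta_k u\|_{L^q}$ with a factor $2^{kj}$ times a Bernstein loss), sums over $k$, and optimizes. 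Equivalently, one can apply the dilation $u_\lambda(x) = u(\lambda x)$: each $L^p$-type norm transforms by an explicit power of $\lambda$, and the assumed scaling relation on $(p,q,r,j,m,a)$ is precisely the compatibility condition that makes the inequality dilation-invariant, which pins down the exponent $a$. A density argument then extends the estimate from Schwartz functions to the full Sobolev class.

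The main obstacle, as with all proofs of Gagliardo-Nirenberg, will be handling the endpoint cases where $p,q$, or $r$ equals $\infty$ (requiring BMO or Lorentz-space substitutes at criticality) and verifying the precise constant dependence on $\mathcal{O}$ through the extension. For the applications in this paper, however, only subcritical parameter choices are needed (for instance, $j=0,1$ with $m=2$, $r=q=2$ in dimension three, as in Corollary~2.1), so these endpoint subtleties can be safely bypassed. Since the inequality is classical and the paper cites Nirenberg's original work, a fully detailed proof is unnecessary; the extension-plus-dilation argument suffices as a plan.
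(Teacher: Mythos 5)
The paper does not prove this lemma at all: it is stated in Appendix A purely as a citation to Nirenberg's 1955 paper, so there is no ``paper proof'' to match. Your plan is a legitimate modern route, but it differs substantially from the cited source: Nirenberg's original argument is elementary and Fourier-free, proving the one-dimensional interpolation inequality on line segments, integrating over the remaining coordinates, and inducting on $m$; your Littlewood--Paley/Bernstein argument is the harmonic-analysis alternative, which is cleaner for the scale-invariant whole-space case but needs more care at endpoints. Two soft spots in your sketch deserve attention. First, the dilation argument is \emph{not} equivalent to the Littlewood--Paley proof: scaling only shows that the stated relation among $(p,q,r,j,m,a)$ is \emph{necessary} for a dilation-invariant inequality to hold; it proves nothing about sufficiency, so you cannot lean on it as an alternative derivation. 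Second, the extension step as written is circular as stated: the Stein extension is bounded $W^{m,r}(\mathcal{O})\to W^{m,r}(\R^3)$, but to dominate $\|Eu\|_{W^{m,r}(\R^3)}$ by $\|D^m u\|_{L^r(\mathcal{O})}+\|u\|_{L^s(\mathcal{O})}$ you must first control the intermediate derivatives $\|D^k u\|_{L^r(\mathcal{O})}$, $0<k<m$, by exactly these two quantities --- which is an instance of the inequality you are trying to prove. The standard fix is an Ehrling/compactness lemma (or the equivalence of Sobolev norms modulo polynomials, with the polynomial part controlled by $\|u\|_{L^s}$), and you should say so explicitly; note also that for $s<1$ the quantity $\|u\|_{L^s}$ is only a quasi-norm, which the compactness argument must accommodate. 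With these two repairs the plan is sound, and since the paper only ever invokes the lemma with $r=q=2$, $m=2$, $j\in\{0,1\}$ in three dimensions, the endpoint caveats you mention are indeed irrelevant to its use here.
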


Next, we introduce a sufficient condition for a solution $(\rho, u)$ to be a renormalized solution.

\begin{Lemma}\label{suff-cond-renorm-solu}
Let $\mathcal{O}\subseteq \R^{3}$ be a bounded domain, and let $\rho \in L^{2}(\mathcal{O}_{T})$ and $u\in L^{2}(0, T; H^{1}(\mathcal{O}))$ such that
$$
\del_{t}\rho +\na \cdot (\rho u)=0 \qquad \mbox{in}\,\, \mathcal{D}^{'}(\mathcal{O}_{T}).
$$
Then
\be\label{renormalize}
\del_{t}g(\rho)+\mathrm{div}(g(\rho)u)+\big(g'(\rho)\rho -g(\rho)\big)\mathrm{div}\, u=0 \qquad \mbox{in} \,\, \mathcal{D}^{'}(\mathcal{O}_{T})
\en
for any $g\in C^{1}(\R)$ with the property: $g'(z)\equiv 0$, when $z\geq M$ for sufficiently large constant $M$, i.e., $(\rho, u)$ is a renormalized solution.
\end{Lemma}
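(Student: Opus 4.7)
The plan is to follow the classical DiPerna--Lions regularization strategy. Let $\{\eta_\ve\}_{\ve>0}$ be a standard family of smooth, compactly supported spatial mollifiers, and set $\rho_\ve := \rho \ast \eta_\ve$ (convolution in $x$ only). Convolving the transport equation $\del_t \rho + \na\cdot(\rho u)=0$ with $\eta_\ve$ produces the regularized identity
\begin{equation*}
\del_t \rho_\ve + \na\cdot(\rho_\ve u) = r_\ve \qquad \mbox{in $\mathcal{D}'(\mathcal{O}_T)$},
\end{equation*}
where the commutator is $r_\ve := \na\cdot(\rho_\ve u) - \bigl(\na\cdot(\rho u)\bigr)\ast \eta_\ve$. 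Because $\rho_\ve$ is smooth in $x$, writing $\na\cdot(\rho_\ve u) = u\cdot \na \rho_\ve + \rho_\ve\,\mathrm{div}\,u$, multiplying the regularized equation by $g'(\rho_\ve)$, and using the standard chain rule for smooth arguments gives, pointwise \textit{a.e.},
\begin{equation*}
\del_t g(\rho_\ve) + \na\cdot \bigl(g(\rho_\ve)u\bigr) + \bigl(g'(\rho_\ve)\rho_\ve - g(\rho_\ve)\bigr)\mathrm{div}\,u = g'(\rho_\ve)\, r_\ve.
\end{equation*}

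The heart of the argument is to show that $r_\ve \to 0$ in $L^1_{\rm loc}(\mathcal{O}_T)$ as $\ve \to 0$ — this is the Friedrichs commutator lemma, which is precisely where the hypotheses $\rho\in L^2(\mathcal{O}_T)$ and $u\in L^2(0,T;H^1(\mathcal{O}))$ are invested. I would rewrite
\begin{equation*}
r_\ve(x) = \int \bigl(u(x)-u(y)\bigr)\cdot \na\eta_\ve(x-y)\,\rho(y)\,\mathrm{d}y \; + \; \rho_\ve(x)\,\mathrm{div}\,u(x) - \bigl(\rho\,\mathrm{div}\,u\bigr)\ast \eta_\ve(x),
\end{equation*}
and estimate the first term by the change of variables $z=(x-y)/\ve$, using that difference quotients $\tfrac{u(x)-u(x-\ve z)}{\ve}$ converge to $z\cdot \na u(x)$ in $L^2$ together with the uniform $L^1$--bound on $|z||\na\eta(z)|$; the remaining two terms converge in $L^1_{\rm loc}$ by the continuity of mollification on $L^2\cdot L^2$ products via H\"older. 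This step is the main technical obstacle, since it rests on the sharp $L^2$--integrability of $\rho$ assumed in the statement.

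Once the commutator estimate is in hand, passage to the limit $\ve\to 0$ is routine. The truncation condition $g'(z)\equiv 0$ for $z\ge M$ ensures that both $g$ and $z\mapsto g'(z)z-g(z)$ are bounded continuous functions on $\mathbb{R}$; hence $\rho_\ve \to \rho$ a.e.\ and in $L^2_{\rm loc}$ yields $g(\rho_\ve)\to g(\rho)$ and $g'(\rho_\ve)\rho_\ve-g(\rho_\ve)\to g'(\rho)\rho-g(\rho)$ in every $L^p_{\rm loc}(\mathcal{O}_T)$, $p<\infty$. Since $\|g'(\rho_\ve)\|_{L^\infty}\le \|g'\|_{L^\infty}$, the forcing term satisfies $g'(\rho_\ve)\,r_\ve \to 0$ in $L^1_{\rm loc}$. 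Taking $\ve\to 0$ in the distributional identity above produces \eqref{renormalize}, completing the proof.
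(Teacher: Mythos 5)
The paper states Lemma \ref{suff-cond-renorm-solu} without proof: it is a classical result from the DiPerna--Lions theory of transport equations, and the paper simply cites it as a standing tool (see Lions' \emph{Mathematical Topics in Fluid Mechanics, Vol.\ 2} and Feireisl--Novotn\'{y}--Petzeltov\'{a}). Your regularize-and-commute strategy is exactly the standard route, so the overall plan is sound; but there is a concrete algebra error in the decomposition of the commutator that, taken at face value, would make $r_\ve$ converge to the wrong limit.

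You define $r_\ve := \na\cdot(\rho_\ve u) - \bigl(\na\cdot(\rho u)\bigr)\ast\eta_\ve$, and this is the right object. But writing it out one gets
\begin{equation*}
r_\ve(x) \;=\; \int\bigl(u(x)-u(y)\bigr)\cdot\na\eta_\ve(x-y)\,\rho(y)\,\md y \;+\; \rho_\ve(x)\,\mathrm{div}\,u(x),
\end{equation*}
with only \emph{two} terms; the extra summand $-(\rho\,\mathrm{div}\,u)\ast\eta_\ve$ in your expression does not belong there. (It would appear if you instead defined the commutator as $u\cdot\na\rho_\ve - (u\cdot\na\rho)\ast\eta_\ve$, since $u\cdot\na\rho = \na\cdot(\rho u) - \rho\,\mathrm{div}\,u$; you appear to have mixed the two normalizations.) Relatedly, the integral term does \emph{not} vanish as $\ve\to 0$: the difference-quotient argument you sketch shows it converges in $L^1_{\rm loc}$ to $\rho(x)\,\del_j u^i(x)\int z_j\,\del_i\eta(z)\,\md z = -\rho\,\mathrm{div}\,u$, using $\int z_j\del_i\eta\,\md z = -\delta_{ij}$. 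It is precisely this nonzero limit that cancels $\rho_\ve\,\mathrm{div}\,u \to \rho\,\mathrm{div}\,u$ and yields $r_\ve\to 0$. With your three-term expression the same bookkeeping gives $r_\ve\to -\rho\,\mathrm{div}\,u\neq 0$, so either the decomposition or the claimed limits must be corrected. Once the spurious term is dropped and the limit of the integral term is identified as $-\rho\,\mathrm{div}\,u$, the argument closes as you describe. Two smaller points worth stating explicitly: since you mollify in $x$ only, the regularized identity holds on $(0,T)\times\{x\in\mathcal{O}: \mathrm{dist}(x,\del\mathcal{O})>\ve\}$, which suffices because the target identity \eqref{renormalize} is an interior (distributional) statement; and the chain rule $\del_t g(\rho_\ve)=g'(\rho_\ve)\del_t\rho_\ve$ is licit because the regularized equation shows $\del_t\rho_\ve\in L^1_{\rm loc}$, even though $\rho_\ve$ is only smoothed in $x$.
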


\begin{Definition}
The metric space $C([0, T]; X^{*}_{\rm weak})$ contains all the functions $v: [0, T]\mapsto X^{*}$ which are continuous with respect to the weak topology.
We say
$$
v_{n} \to v \qquad \mbox{in }C([0, T]; X^{*}_{\rm weak}),
$$
if $(v_{n}(t), \phi)\to (v(t), \phi)$ uniformly with respect to $t\in [0, T]$ for any $\phi \in X$.
\end{Definition}

In the following corollary, we introduce a sufficient condition for a family of functions
to converge in $C([0, T]; X^{*}_{\rm weak})$ (see Corollary 2.1 in \cite{F-2004}).

\begin{Corollary}\label{suff-cond-C-Weak-star}
Let $X$ be a separable Banach space. Assume that  $v_{n}: [0, T] \to X^{*}$, $n=1, 2, \cdot \cdot \cdot$,
is a sequence of measurable functions such that
$$
\mathrm{ess}\sup_{t\in [0, T]}\|v_{n}(t)\|_{X^{*}}\leq M,\qquad \mbox{uniformly in }n=1, 2, \cdot \cdot \cdot.
$$
Moreover, let the family of functions{\rm :}
$$
(v_{n}, \phi): t\mapsto (v_{n}(t), \phi), \qquad t\in [0, T], n=1, 2, \cdot \cdot \cdot,
$$
be equi-continuous for any fixed $\phi$ belonging to a dense subset in $X$.

Then $v_{n}\in C([0, T]; X^{*}_{\rm weak})$ for any $n=1, 2, \cdot \cdot \cdot$,
and there exist $v\in C([0, T]; X^{*}_{\rm weak})$ and a subsequence (still denoted)
$v_{n}$ such that
$$
v_{n}\to v \qquad \mbox{in }C([0, T]; X^{*}_{\rm weak}) \quad \mbox{as }n\to \infty.
$$
\end{Corollary}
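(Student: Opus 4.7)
The plan is to prove this corollary as a weak-$\ast$ Arzelà–Ascoli theorem, combining a diagonal subsequence extraction on a countable dense subset of $X$ with a $3\varepsilon$–argument to extend convergence to all test functions in $X$. First I would fix a countable dense subset $\{\phi_k\}_{k\geq 1}\subset X$, whose existence is guaranteed by separability. For each fixed $k$, the family $t\mapsto (v_n(t),\phi_k)$ is a sequence of real-valued functions on $[0,T]$ which is uniformly bounded (since $|(v_n(t),\phi_k)|\leq M\|\phi_k\|_X$) and equi-continuous by hypothesis. A direct application of the classical Arzelà–Ascoli theorem then yields a subsequence converging uniformly on $[0,T]$ to some continuous real-valued function $g_k(t)$. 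A standard Cantor diagonal extraction produces a single subsequence, still denoted $\{v_n\}$, such that $(v_n(\cdot),\phi_k)\to g_k(\cdot)$ uniformly on $[0,T]$ for every $k$.

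Next I would promote this pointwise-in-$\phi_k$ convergence to convergence against every $\phi\in X$. Given $\phi\in X$ and $\varepsilon>0$, choose $\phi_k$ with $\|\phi-\phi_k\|_X<\varepsilon$. Then a standard splitting
\begin{equation*}
|(v_n(t),\phi)-(v_m(t),\phi)|\leq |(v_n(t)-v_m(t),\phi_k)|+2M\|\phi-\phi_k\|_X
\end{equation*}
shows that $\{(v_n(t),\phi)\}$ is Cauchy in $\R$, uniformly in $t\in[0,T]$; call the limit $L(t,\phi)$. By construction $L(t,\cdot)$ is linear and satisfies $|L(t,\phi)|\leq M\|\phi\|_X$, so there exists a unique $v(t)\in X^{\ast}$ with $(v(t),\phi)=L(t,\phi)$, and moreover $\|v(t)\|_{X^{\ast}}\leq M$ for every $t$.

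It remains to verify that $v\in C([0,T];X^{\ast}_{\rm weak})$ and that $v_n\to v$ in that metric space. The uniformity in $t$ of the Cauchy property above means exactly that $(v_n(\cdot),\phi)\to (v(\cdot),\phi)$ uniformly on $[0,T]$ for every $\phi\in X$, which is the definition of convergence in $C([0,T];X^{\ast}_{\rm weak})$. Since each $(v_n,\phi)$ is continuous in $t$ (by equi-continuity on a dense set, together with the uniform bound), the uniform limit $(v,\phi)$ is continuous in $t$ for every $\phi\in X$, so $v$ indeed belongs to $C([0,T];X^{\ast}_{\rm weak})$.

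The main technical point — and the only place where the hypotheses are used in a nontrivial way — is the passage from the dense subset to all of $X$ while preserving \emph{uniform-in-$t$} convergence; this is exactly where the uniform bound $\|v_n(t)\|_{X^{\ast}}\leq M$ is essential. The equi-continuity hypothesis is only needed on a dense subset, which is why the statement allows it. Everything else is a routine diagonal-extraction argument, and separability of $X$ is what makes the diagonal trick available.
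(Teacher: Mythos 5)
Your argument is correct and is exactly the standard weak-$*$ Arzel\`a--Ascoli proof; the paper itself gives no proof of this corollary but simply cites it as Corollary 2.1 of Feireisl's book \cite{F-2004}, where essentially the same diagonal-plus-density argument appears. Two small points are worth tightening. First, the countable dense set $\{\phi_k\}$ must be chosen \emph{inside} the dense subset on which equi-continuity is assumed (this is possible because every subset of a separable metric space is separable), otherwise the appeal to equi-continuity for $\phi_k$ is unjustified. Second, the hypothesis is an $\mathrm{ess}\sup$ bound, so the inequality $\|v_n(t)\|_{X^{*}}\leq M$ holds only for a.e.\ $t$; for $\phi_k$ in the dense set the bound $|(v_n(t),\phi_k)|\leq M\|\phi_k\|_X$ then propagates to \emph{all} $t$ because $t\mapsto(v_n(t),\phi_k)$ is continuous, and this is what legitimizes both the Arzel\`a--Ascoli step and the $3\varepsilon$ splitting (equivalently, one redefines $v_n$ on a $t$-null set so that $\|v_n(t)\|_{X^{*}}\leq M$ everywhere, which is how the conclusion $v_n\in C([0,T];X^{*}_{\rm weak})$ is to be understood). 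With these clarifications the proof is complete.
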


\begin{Proposition}\label{conv-h-1}
Let $\mathcal{O}$ be a bounded domain in $\R^{3}$.
If $v_{n}\in L^{\infty}(0, T; L^{p}(\mathcal{O}))$ for $p>\frac{6}{5}$, and $v_{n}\to v$ in $C([0, T], L^{p}_{\rm weak}(\mathcal{O}))$,
then $v_{n}\to v$ in $C([0, T], H^{-1}(\mathcal{O}))$.
\end{Proposition}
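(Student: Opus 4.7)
The plan is to exploit the compact embedding $L^{p}(\mathcal{O}) \Subset H^{-1}(\mathcal{O})$, valid for $p > 6/5$. This follows by duality from the Rellich--Kondrachov embedding $H^{1}_{0}(\mathcal{O}) \Subset L^{p'}(\mathcal{O})$, which holds in three space dimensions exactly when $p' < 6$, i.e.\ $p > 6/5$. Compactness converts the weak convergence in $L^{p}$ supplied by the hypothesis into norm convergence in $H^{-1}$ at each individual time; the substantive task is to upgrade pointwise-in-$t$ strong convergence to uniform-in-$t$ strong convergence.

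I would proceed by contradiction. Set $w_{n}(t) := v_{n}(t) - v(t)$. The hypotheses give the uniform bound $\sup_{n}\|w_{n}\|_{L^{\infty}(0,T;L^{p}(\mathcal{O}))} \leq 2M$, together with $\sup_{t \in [0,T]}|(w_{n}(t),\phi)| \to 0$ as $n \to \infty$ for every fixed $\phi \in L^{p'}(\mathcal{O})$. Suppose the conclusion fails. Then there exist $\varepsilon_{0} > 0$, a subsequence (still indexed by $n$), and times $t_{n} \in [0,T]$ such that $\|w_{n}(t_{n})\|_{H^{-1}(\mathcal{O})} \geq \varepsilon_{0}$. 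By compactness of $[0,T]$ one may assume $t_{n} \to t_{*} \in [0,T]$, and by reflexivity of $L^{p}$ together with the uniform bound one may pass to a further subsequence so that $w_{n}(t_{n}) \rightharpoonup z$ weakly in $L^{p}(\mathcal{O})$ for some $z$.

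The critical step is to identify $z = 0$. For any $\phi \in L^{p'}(\mathcal{O})$, write the decomposition
$$(w_{n}(t_{n}), \phi) = \bigl(v_{n}(t_{n}) - v(t_{n}), \phi\bigr) + \bigl(v(t_{n}) - v(t_{*}), \phi\bigr).$$
The first term tends to zero because $v_{n} \to v$ in $C([0,T]; L^{p}_{\rm weak}(\mathcal{O}))$ yields uniform-in-$t$ convergence $(v_{n}(t) - v(t),\phi) \to 0$; the second term vanishes because $v \in C([0,T]; L^{p}_{\rm weak}(\mathcal{O}))$ and $t_{n} \to t_{*}$. Hence $z = 0$, so $w_{n}(t_{n}) \rightharpoonup 0$ in $L^{p}(\mathcal{O})$.

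Invoking the compact embedding $L^{p} \Subset H^{-1}$ then promotes this weak convergence to strong convergence $w_{n}(t_{n}) \to 0$ in $H^{-1}(\mathcal{O})$, directly contradicting $\|w_{n}(t_{n})\|_{H^{-1}} \geq \varepsilon_{0}$. The main (and only) real ingredient is the compact embedding step, which is standard but is precisely where the restriction $p > 6/5$ enters; the remainder is an Arzel\`a--Ascoli-style argument combining time-compactness of $[0,T]$ with the weak-star continuity of the limit $v$.
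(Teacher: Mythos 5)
Your argument is correct, and it is the standard one. Note that the paper does not actually prove this proposition: it is stated in Appendix \ref{appendix-a} as a known preliminary (it is the compactness lemma used throughout Feireisl--Novotn\'y--Petzeltov\'a and Feireisl's book), so there is no in-paper proof to compare against. Your two ingredients --- the compact embedding $L^{p}(\mathcal{O})\Subset H^{-1}(\mathcal{O})$ for $p>\frac{6}{5}$, obtained by Schauder duality from the Rellich--Kondrachov embedding $H^{1}_{0}(\mathcal{O})\Subset L^{p'}(\mathcal{O})$ with $p'<6$, and the upgrade from weak $L^{p}$-convergence to strong $H^{-1}$-convergence of the sequence $w_{n}(t_{n})$ --- are exactly what the result rests on, and the contradiction scheme is sound.

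Two small points. First, the displayed ``decomposition'' is not an identity: since $w_{n}(t_{n})=v_{n}(t_{n})-v(t_{n})$, the left-hand side equals only the first term on the right, while the sum of the two terms equals $(v_{n}(t_{n})-v(t_{*}),\phi)$. This does not damage the proof, because the identification $z=0$ already follows from the first term alone: $|(w_{n}(t_{n}),\phi)|\leq\sup_{t\in[0,T]}|(v_{n}(t)-v(t),\phi)|\to 0$ by the definition of convergence in $C([0,T];L^{p}_{\rm weak}(\mathcal{O}))$. For the same reason the extraction $t_{n}\to t_{*}$ and the appeal to the weak continuity of $v$ are superfluous; you only need the uniform bound $\|w_{n}(t_{n})\|_{L^{p}}\leq 2M$ (which uses weak lower semicontinuity of the norm to bound $\|v\|_{L^{\infty}_{t}L^{p}_{x}}$) together with $w_{n}(t_{n})\rightharpoonup 0$ in $L^{p}$, and then the compact embedding forces $w_{n}(t_{n})\to 0$ in $H^{-1}$, contradicting $\|w_{n}(t_{n})\|_{H^{-1}}\geq\varepsilon_{0}$. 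Second, in the weak-to-strong step it is cleanest to argue that boundedness in $L^{p}$ gives precompactness of $\{w_{n}(t_{n})\}$ in $H^{-1}$, so every subsequence has a sub-subsequence converging strongly in $H^{-1}$ to a limit that must coincide with the distributional limit $0$; hence the full sequence converges. With these cosmetic repairs the proof is complete.
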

\bigskip

\section*{Acknowledgments}
The research of G.-Q. Chen was supported in part by the UK
Engineering and Physical Sciences Research Council Award EP/L015811/1
and the Royal Society--Wolfson Research Merit Award (UK).
The research of A.  Majumdar was supported by an EPSRC Career Acceleration Fellowship EP/J001686/1 and EP/J001686/2,
an OCIAM Visiting Fellowship and the Advanced Studies Centre at Keble College.
The research of D. Wang was supported in part by the National Science Foundation under grants DMS-1312800 and DMS-1613213.
The research of R. Zhang was supported in part by the National Science Foundation under grant DMS-1613213 and DMS-1613375.

\bigskip


\begin{thebibliography}{aa}
\footnotesize
\bibitem{A-1963} J. Aubin, {\it Un th\'{e}or\`{e}me de compacit\'{e}} (in French), C. R. Acad. Sci. Paris, 256 (1963), 5042--5044.

\bibitem{B-C-D-2011} H. Bahouri, J. Chemin, and R. Danchin,
{\it Fourier Analysis and Nonlinear Partial Differential Equations}, Springer-Verlag: Berlin Heidelberg, 2011.

\bibitem{B-M-2010} J. M. Ball and A. Majumdar,
{\it Nematic liquid crystals: From Maier-Saupe to a continuum theory},
Molecular Crystals and Liquid Crystals, 525 (2010), 1--11.

\bibitem{B-Z-2011} J. M. Ball and A. Zarnescu,
{\it Orientability and energy minimization in liquid crystal models},
Arch. Ration. Mech. Anal. 202 (2011), 493--535.

\bibitem{B-T-Y-2014} M. L. Blow, S. P. Thampi, and J. M. Yeomans,
{\it Biphasic, lyotropic, active nematics}, Phys. Rev. Lett. 113 (2014), 248303.

\bibitem{B-1980} M. E. Bogovskii,
{\it Solution of some vector analysis problems connected with operators
div and grad}(in Russian), Trudy Sem. S. L. Sobolev, 80 (1) (1980), 5--40.

\bibitem{B-S-1990} W. Borchers and H. Sohr,
{\it On the equation $\mathrm{rot} v=g$ and $\mathrm{div}\, u =f$ with zero boundary conditions},
Hokkaido Math. J. 19 (1990),
67--87.

\bibitem{C-D-2007} N. Chakrabarti and P. Das,
{\it Isotropic to nematic phase transition in F-actin}, J. Surface Sci. Technol. 23 (2007), 177.

\bibitem{C-G-M-2006} H. Chat\'{e}, F. Ginelli, R. Montagne,
{\it Simple model for active nematics: Quasi-long-range order and giant fluctuations},
Phys. Rev. Lett. 96 (2006), 180602.

\bibitem{C-M-W-Z} G.-Q. Chen, A. Majumdar, D. Wang, and R. Zhang,
{\it Global existence and regularity of solutions for the active liquid crystals},
J. Differential Equation, 263 (2017), 202--239.

\bibitem{C-1997} P. Constantin,
{\it The Littlewood-Paley spectrum in two-dimensional turbulence},
Theor. Comput. Fluid Dyn. 9 (1997), 183--189.

\bibitem{C-2006} P. Constantin,
{\it Euler equations, Navier-Stokes equations and turbulence},
In: Mathematical Foundation of Turbulent Viscous Flows,
Lecture Notes in Math. Vol. 1897, pp. 1--43, Berlin: Springer, 2006.

\bibitem{D-T-R-B-2010} N. C. Darnton, L. Turner, S. Rojevsky, and H. C. Berg,
{\it Dynamics of bacterial swarming}, Biophys. J. 98 (2010), 2082.

\bibitem{D-E-1986} M. Doi and S. F. Edwards,
{\it The Theory of Polymer Dynamics}, Oxford University Press: Oxford, UK, 1986.

\bibitem{E-Z-2006} W. E, and P. Zhang,
{\it A molecular kinetic theory of inhomogeneous liquid
crystal flow and the small Deborah number limit},
Methods Appl. Anal. 13 (2006), 181--198.

\bibitem{E-1961} J. L. Ericksen,
{\it Conservation laws for liquid crystals},
Trans. Soc. Rheology, 5 (1961), 23--34.

\bibitem{E-1998} L. Evans,
{\it Partial Differential Equations},
AMS: Providence, RI, USA, 1998.

\bibitem{F-2001} E. Feireisl,
{\it On compactness of solutions to the compressible isentropic
Navier-Stokes equations when the density is not square integrable},
Comment. Math. Univ. Carolinae, 42 (1) (2001), 83--98.

\bibitem{F-N-P-2001} E. Feireisl, A. Novotn\'{y}, and H. Petzeltov\'{a},
{\it On the existence of globally defined weak solutions to the Navier-Stokes equations},
J. Math. Fluid Mech. 3 (2001), 358--392.

\bibitem{F-2004} E. Feireisl, {\it Dynamics of Viscous Compressible Fluids},
The Clarendon Press, Oxford University Press: New York, 2004.

\bibitem{F-R-S-Z-2012} E. Feireisl, E. Rocca, G. Schimperna, and A. Zarnescu,
{\it Evolution of non-isothermal Landau-de Gennes nematic liquid crystals flows with singular potential},
Commun. Math. Sci. 12 (2014), 317--343.

\bibitem{F-M-C-2011} S. M. Fielding, D. Marenduzzo, and M. E. Cates,
{\it Nonlinear dynamics and rheology of active fluids: Simulations in two dimensions}, Phys.
Rev. E 83 (2011), 041910.

\bibitem{F-1958} F. C. Frank,
{\it On the theory of liquid crystals},
Discussions Faraday Soc. 25 (1958), 19--28.

\bibitem{F-1995}U. Frisch,
{\it Turbulence}, Cambridge: Cambridge University Press, 1995.

\bibitem{G-1994} G. P. Galdi,
{\it An Introduction to the Mathematical Theory of the Navier-Stokes Equations, I}.
Springer-Verlag: New York, 1994.

\bibitem{G-1995} P. G. de Gennes and J. Prost,
{\it The Physics of Liquid Crystals}, The Clarendon Press, Oxford University Press: New York, 1995.

\bibitem{G-P-B-C-2010} F. Ginelli, F. Peruani, M. B\"{a}r, and H. Chat\'{e},
{\it Large-scale collective properties of self-propelled rods}, Phys. Rev. Lett. 104 (2010),
184502.

\bibitem{G-B-M-M-2013} L. Giomi, M. J. Bowick, X. Ma, and M. C. Marchetti,
{\it Defect annihilation and proliferation in active nematics},
Physical Review Letters, 110 (2013), 228101.

\bibitem{G-B-M-S-M-2014} L. Giomi, M. J. Bowick, P. Mishra, R. Sknepnek, and M. C. Marchetti,
{\it Defect dynamics in active nematics}, Phil. Trans. R. Soc. A 372 (2014), 20130365.

\bibitem{G-M-L-2008} L. Giomi, M. C. Marchetti, and T. B. Liverpool,
{\it Complex spontaneous flows and concentration banding in active polar films},
Phys. Rev. Lett. 101 (2008), 198101.

\bibitem{H-1995} D. Hoff,
{\it Strong convergence to global solutions for multidimensional flows of
compressible, viscous fluids with polytropic equations of state and discontinuous initial data},
Arch. Rational Mech. Anal. 132 (1995), 1--14.

\bibitem{G-L-M-2010} L. Giomi, T. B. Liverpool, and M. C. Marchetti,
{\it Sheared active fluids: Thickening, thinning, and vanishing viscosity}, Phys. Rev. E 81 (2010), 051908.

\bibitem{G-M-C-H-2011} L. Giomi, L. Mahadevan, B. Chakraborty, and M. F.
{\it Hagan, Excitable patterns in active nematics}, Phys. Rev. Lett. 106 (2011), 218101.

\bibitem{G-M-C-H-2012} L. Giomi, L. Mahadevan, B. Chakraborty, and M. F.
Hagan,
{\it Banding, excitability and chaos in active nematic suspensions},
Nonlinearity, 25 (2012), 2245.

\bibitem{H-1951} E. Hopf,
{\it \"{U}ber die anfangswertaufgabe f\"{u}r
die hydrodynamischen Grundgleichungen}, Math. Nachr. 4 (1951), 213--231.

\bibitem{K-F-K-L-2008} J. Kierfeld, K. Frentzel, P. Kraikivski, and R. Lipowsky,
{\it Active dynamics of filaments in motility assays}, Eur. Phys. J. Special Topics, 157 (2008), 123.

\bibitem{K-D-1983} N. Kuzuu, and M. Doi,
{\it Constitutive equation for nematic liquid crystals under weak velocity
gradient derived from a molecular kinetic equation}, J. Phys. Soc. Japan, 52 (1983), 3486--3494.

\bibitem{L-S-U-1968} O. A. Ladyzhenskaya, N. A. Solonnikov, and N. N. Uraltseva,
{\it Linear and Quasilinear Equations of Parabolic Type},
Transl. Math. Monographs, Vol. 23, AMS: Providence, 1968.

\bibitem{L-1934}J. Leray,
{\it Essai sur le mouvement dun fluide visqueux emplissant l'espace},
Acta Math. 63 (1934), 193--248.

\bibitem{L-1968} F. M.  Leslie,
{\it Some constitutive equations for liquid crystals}, Arch. Ration. Mech. Anal. 28 (1968), 265--283.

\bibitem{L-L-2001} F.  Lin and C. Liu,
{\it Static and dynamic theories of liquid crystals},
J. Partial Differ. Equ. 14(4) (2001), 289--330.

\bibitem{L-W-2014} F. Lin and C. Wang,
{\it Recent developments of analysis for hydrodynamic flow
of nematic liquid crystals},
Phil. Trans. R. Soc. Lond. Ser. A Math. Phys. Eng. Sci. 372 (2014), no. 2029.

\bibitem{L} J.-L. Lions,
{\it Quelques M\'{e}thodes de R\'{e}solution des Probl$\grave{e}$mes aux
Limites Nonlin\'{e}aires} (in French), Dunod; Gauthier-Villars: Paris, 1969

\bibitem{L-1996} P.-L. Lions, {\it Mathematical Topics in Fluid Dynamics in Fluid Dynamics, Vol. 1,
Incompressible Models}, Oxford Science Publication: Oxford, 1996.

\bibitem{L-1998} P.-L. Lions, {\it Mathematical Topics in Fluid Dynamics in Fluid Dynamics,
Vol. 2, Compressible Models}, Oxford Science Publication: Oxford, 1998.

\bibitem{M-2010} A. Majumdar, {\it Equilibrium order parameters of nematic liquid crystals in the Landau-de Gennes
theory}, European J. Appl. Math. 21 (2010), 181--203.

\bibitem{M-Z-2010} A. Majumdar and A. Zarnescu,
{\it Landau-De Gennes theory of nematic liquid crystals: the Oseen-Frank limit and beyond},
Arch. Ration. Mech. Anal. 196 (2010), 227--280.

\bibitem{M-J-R-L-P-R-A-2013} M. C. Marchetti, J. F. Joanny, S. Ramaswamy, T. B. Liverpool, J. Prost, Madan Rao, and
R. Aditi Simha,
{\it Hydrodynamics of soft active matter}, Rev. Mod. Phys. 85 (2013), 1143.

\bibitem{M-O-C-Y-2007} D. Marenduzzo, E. Orlandini, M. E. Cates, and J. M.
Yeomans,
{\it Steady-state hydrodynamic instabilities of active liquid crystals: Hybrid lattice Boltzmann simulations},
Phys. Rev. E 76 (2007), 031921 (2007).

\bibitem{M-R-2006} S. Mishra, and S. Ramaswamy,
{\it Active nematics are intrinsically phase separated},
Phys. Rev. Lett. 97 (2006), 090602.

\bibitem{M-B-M-2010} S. Mishra, A. Baskaran, and M. C. Marchetti,
{\it Fluctuations and pattern formation in self-propelled particles},
Phys. Rev. E 81 (2010), 061916.

\bibitem{N-R-M-2007} V. Narayan, S. Ramaswamy, and N. Menon,
{\it Long-lived giant number fluctuations in a swarming granular nematic},
Science, 317 (2007), 105.

\bibitem{N-1955} L. Nirenberg, {\it On elliptic partial differential equations},
Ann. Scuola Norm. Sup. Pisa Cl. Sci.
13 (1955), 116--162.

\bibitem{O-1949} L. Onsager, {\it The effects of shape on the interaction of colloidal particles},
Ann. N. Y. Acad. Sci. 51 (1949), 627--659.

\bibitem{O-1933} C. W. Oseen, {\it The theory of liquid crystals},
Trans. Faraday Soc. 29 (1933), 883--899.

\bibitem{P-Z-2011} M. Paicu and A. Zarnescu,
{\it Global existence and regularity for the full coupled Navier-Stokes
and Q-tensor system}, SIAM J. Math. Anal. 43 (2011), 2009--2049.

\bibitem{P-Z-2012} M. Paicu and A. Zarnescu,
{\it Energy dissipation and regularity for a coupled Navier-Stokes
and Q-tensor system}, Arch. Ration. Mech. Anal. 203 (2012), 45--67.

\bibitem{P-K-O-2004} W. F. Paxton, K. C. Kistler, C. C. Olmeda, A. Sen, S. K. St. angelo, Y. Cao, T. E. Mallouk, P. E. Lammert, and V. H. Crespi,
{\it Catalytic nanomotors: autonomous movement of striped nanorods}, J. Amer. Chem. Soc. 126 (41) (2014),  13424--13431.

\bibitem{P-K-1992} T. J. Pedley and J. O. Kessler,
{\it Hydrodynamic phenomena in suspensions of swimming microorganisms}, Annu. Rev. Fluid Mech. 24 (1992), 313.

\bibitem{R-2010} S. Ramaswamy, {\it The mechanics and statistics of active matter},
Annu. Rev. Condens. Matter Phys. 1 (2010), 301.

\bibitem{R-S-T-2003} S. Ramaswamy, R. A. Simha, and J. Toner,
{\it Active nematics on a substrate: Giant number fluctuations and long-time tails},
Europhys. Lett. 62 (2) (2003), 196--202.

\bibitem{R-Y-2013} M. Ravnik and J. M. Yeomans,
{\it Confined active nematic flow in cylindrical capillaries}, Phys. Rev. Lett. 110 (2013), 026001.

\bibitem{S-S-2008} D. Saintillan and M. J. Shelley,
{\it Instabilities and pattern formation in active particle suspensions: Kinetic theory and continuum simulations},
Phys. Rev. Lett. 100 (2008), 178103.

\bibitem{S-C-D-H-D-2012} T. Sanchez, D. T. N. Chen, S. J. DeCamp, M. Heymann,
and Z. Dogic, {\it Spontaneous motion in hierarchically assembled
active matter}, Nature, 491 (2012), 431.

\bibitem{S-1991} D. Serre,
{\it Variation de grande amplitude pour la densit\'{e} d'un fluid viscueux
compressible}, Physica D48 (1991), 113--128.

\bibitem{S-1986} J. Simon,
{\it Compact sets in the space $L^{p}(O,T; B)$}.
Annali di Matematica Pura ed Applicata, 146 (1986), 65--96.

\bibitem{S-A-2009} A. Sokolov and I. S. Aranson,
{\it Reduction of viscosity in suspension of swimming bacteria},
Phys. Rev. Lett. 103 (2009), 148101.

\bibitem{Temam-book}
R. Temam, {\it Navier-Stokes Equations: Theory and Numerical Analysis},
Reprint of the 1984 Edition. AMS Chelsea Publishing: Providence, RI, 2001.

\bibitem{T-2012} G. Teschl,
{\it Ordinary Differential Equations and Dynamical Systems},
AMS: Providence, RI, 2012.

\bibitem{virga} E. Virga, {\it Variational Theories for Liquid Crystals},
Chapman \& Hall: London, UK, 1994.

\bibitem{V-J-P-2005} R. Voituriez, J. F. Joanny, and J. Prost,
{\it Spontaneous flow transition in active polar gels}, Europhys. Lett. 70 (2005), 404.

\bibitem{W-X-Y-2015} D. Wang, X. Xu, and C. Yu,
{\it Global weak solution for a coupled compressible
Navier-Stokes and Q-tensor system}, Commun. Math. Sci. 13 (2015), 49--82.

\bibitem{W-Z} D. Wang, R. Zhang,
{\it Global weak solutions for the inhomogeneous active liquid crystal system}, Submitted.

\bibitem{W-Z-Z} W. Wang, P. Zhang, and Z. Zhang,
{\it The small Deborah number limit of the Doi-Onsager equation to the Ericksen-Leslie equation},
arXiv:1208.6107.

\bibitem{W-Z-Z-1} W. Wang, P.  Zhang, and Z. Zhang,
{\it Rigorous derivation from Landau-de Gennes theory to Ericksen-Leslie theory},
SIAM J. Math. Anal. 47 (2015),  127--158.

\bibitem{W-D-H-D-G-L-Y-2012} H. H. Wensink, J. Dunkel, S. Heidenreich, K. Drescher, R. E. Goldstein, H. Lowen, and J. M. Yeomans,
{\it Meso-scale turbulence in living fluids}, PNAS 109 (2012), 14308.

\bibitem{W-2012} M. Wilkinson,
{\it Strict physicality of global weak solutions of a Navier-Stokes Q-tensor system with singular potential},
arXiv: 1211.6083 (2012).

\end{thebibliography}
\end{document}